\newcommand{\pedro}{\ifthenelse{\boolean{pedro}}{\color{magenta}
    \setboolean{pedro}{false}}{\color{black}\setboolean{pedro}{true}}}    
\newcommand{\jose}{\ifthenelse{\boolean{jose}}{\color{blue}
    \setboolean{jose}{false}}{\color{black}\setboolean{jose}{true}}}
\newcommand{\joseGreen}{\ifthenelse{\boolean{joseN}}{\color{green}
    \setboolean{joseN}{false}}{\color{black}\setboolean{joseN}{true}}}  
\newcommand{\joseQ}{\ifthenelse{\boolean{joseQ}}{\color{red}
    \setboolean{joseQ}{false}}{\color{black}\setboolean{joseQ}{true}}}
\title[Rational rank of solutions]
{The Rational rank of the support of
  generalized power series solutions of differential and
  $q$-difference equations}
\author{J. Cano and P. Fortuny Ayuso}
\address{J. Cano: Facultad de Ciencias. Universidad de Valladolid.
  47011-Valladolid Spain}
\email{jcano@uva.es}
\address{P. Fortuny Ayuso: Universidad de Oviedo. 33007-Oviedo, Spain}
\email{fortunypedro@uniovi.es}
\subjclass{34M25 (Primary), 30D05 (Secondary)}
\keywords{Rational rank; Generalized Power Series Solution;
  Ordinary Differential Equation; $q$-difference Equation;  Newton
  Polygon Method; Convergence criteria.}
\theoremstyle{definition}
\newtheorem{definition}{Definition}
\newtheorem{theorem}{Theorem}
\newtheorem{proposition}{Proposition}
\newtheorem{lemma}{Lemma}
\newtheorem{corollary}{Corollary}
\newtheorem{remark}{Remark}
\newtheorem{procedure}{Procedure}
\DeclareMathOperator{\NIC}{NIC}
\DeclareMathOperator{\supp}{supp}
\DeclareMathOperator{\height}{ht}
\DeclareMathOperator{\ord}{ord}
\DeclareMathOperator{\Top}{Top}
\DeclareMathOperator{\Bot}{Bot}
\newcommand{\llangle}{\left\langle}
\newcommand{\rrangle}{\right\rangle}
\newcommand{\angles}[1]{\left\langle {#1}\right\rangle}
\def\mina{\lambda}
\newcommand{\dmu}[1]{\delta_{#1}}
\newcommand{\dmuk}[2]{\delta_{#1}^{(#2)}}
\newcommand{\shift}{\varepsilon}
\newcommand{\DD}{\mathcal{D}}
\begin{document}
\begin{abstract}
  Given a differential or $q$-difference equation $P$ of order $n$, we
  prove that the set of exponents of a generalized power series
  solution has its rational rank bounded by the rational rank of the
  support of $P$ plus $n$. We also prove that when the support of the
  solution has maximum rational rank, it is convergent. Using the
  Newton polygon technique, we show also that in the maximum rational
  rank case, an initial segment can always be completed to a true
  solution. The techniques are the same for the differential and the
  $q$-difference case.
\end{abstract}
\maketitle

\section{Introduction}

An important class of solutions of differential and $q$-difference
equations are formal power series.
Their relevance comes from algebraic to geometric, analytical, combinatorial and
logical point of views.  Denef and Lipschitz's paper \cite{Denef1984} deals with the
logical questions; Singer's and Grigoriev's works study the possibility of finding these
solutions, and some of their convergence properties \cite{SINGER199059,Grigoriev-Singer};
the excellent book on combinatorics by Flajolet and Sedgewick \cite{flajolet2009analytic}
contains a complete section devoted to the relation between solutions of polynomial
differential equations and combinatorics; and one cannot forget the relation between power
series, derivations, Hardy fields, o-minimal structures and related topics
\cite{Matusinski2006,CanoMoussuRolin+2005+107+141,10.1112/plms/pdm016}.
Finally, any survey of the
bibliography would be incomplete without citing van der Hoeven's works on transseries
\cite{van2006transseries} and the \emph{magnum opus}
\cite{aschenbrenner2017asymptotic}.

Grigoriev and Singer in~\cite{Grigoriev-Singer}
introduce the field $\Omega$ of generalized power
series with complex coefficients and real exponents:
series $\sum_{i=0}^{\infty}a_i\,x^{\nu_i}$, where
$a_i\in \mathbb{C}$, $\nu_i\in \mathbb{R}$, $v_0<\nu_1<\cdots$ and
$\lim \nu_i=\infty$,
and prove that if
$y(x)=\sum_{i=0}^{\infty}a_i\,x^{\nu_i}\in \Omega$ is a solution of a non trivial
polynomial ordinary differential equation $P(y,y',\ldots,y^{(n)})=0$ with coefficients in
the field of formal Laurent series $\mathbb{C}((x))$,
then its support
$\supp y(x)=\{\nu_i\mid a_i\neq 0\}$ (the set of exponents with non-zero coefficient),
generates a finite $\mathbb{Z}$-module but they provide no information
on its rational rank. Our first aim (Theorem~\ref{the:main1})
is to prove in an elementary way that,
modulo the support of the equation, 
this rational rank is at most the
order of the equation.
Our arguments work almost word by word
for $q$-difference equations.
Our proof of Theorem~\ref{the:main1} allows us
to show that in both cases, when the rational rank of the
solution reaches its maximum possible value, then the solution is
necessarily convergent (assuming $P$ is a polynomial), which is
Theorem~\ref{the:main2}.
We make use of the convergence  results of Gontsov, Goryuchkina
and Lastra~\cite{gontsov-goryuchkina2015,Gontsov2022579}.
Theorem~\ref{the:main3} addresses the possibility of completing an initial
segment $s_0(x)=\sum_{1\leq i\leq k} c_i\,x^{\mu_i}$ to an actual power series 
$s(x)$ solution of the differential or $q$-difference equation $P=0$.
Finally, Theorem~\ref{th:AutFirstOrderODEs} is the consequence of applying the
previous results to the case of autonoumous first order differential
equations, providing new proofs about the existence and convergence
of Puiseux solutions of these equations given in \cite{Cano2022137}.

Other authors have proved related finiteness results in other contexts
\cite{Matusinski2006,CanoMoussuRolin+2005+107+141,10.1112/plms/pdm016,
  Cano-Fortuny-Asterisque,Cano-Fortuny-q-diff-2022},
but among these, none gives effective bounds for the rational rank of
the support of the solutions.
The unique effective bound we have knowledge of is by
van der Hoeven, in \cite{van2006transseries} (Corollary
8.38), who shows in the language of transseries that
if $P(y,y^{\prime},\ldots,y^{(n)})=0$ is
an algebraic differential equation and $f$ is a transseries
solution, then \emph{essentially} the rational rank of the semigroup
of monomials of $f$ can only
increase by the order~$n$ of $P$. This is the same result as our
Theorem~\ref{the:main1} in the case of differential equations,
and over a field which neither contains nor is contained in $\Omega$. Our
arguments have the advantage of simplicity and conciseness.

Rosenlicht, in the context of Hardy
fields~\cite{Rosenlicht1983659} shows that the rank
(the number of comparability classes) of the differential field
generated by a solution of the differential equation is bounded by the
order of the equation. We, however, work over $\Omega$ which has
rank~$1$, and his results do not give insight about the rational rank of the support
of the solution.

In this paper we work indistinctly with ordinary differential
equations and $q$-difference equations.
Given a polynomial $P\in \Omega[y_0,y_1,\ldots,y_n]$, the equation
$P=0$ means $P(y(x),y'(x),\ldots,y^{(n)}(x))=0$, with the 
assumption that $y'(x)$ refers to either
the usual derivation operator $y'(x)=\frac{d\, y(x)}{d\, x}$,
the Euler derivation
$y'(x)=x\,\frac{d\, y(x)}{d\, x}$ or
$y'(x)=y(q\, x)$, for some $q\in\mathbb{C}$, with $|q|\neq 1$. We do
not consider mixed equations.

Given $P\in \Omega[y_0,y_1,\ldots,y_n]$ 
we denote by $\supp P$  the union of the supports of the coefficients
of $P$, and if $E$ is a subset of $\mathbb{R}$ then
$\left\langle E \right\rangle$ is the
$\mathbb{Q}$-linear subspace of $\mathbb{R}$
generated by $E$.
Our first result is Theorem~\ref{the:main1} which states that if
$s(x)\in \Omega$ is a solution of $P=0$ then
  \begin{equation}\label{eq:IntroDesigualdadTh1}
    \dim_{\mathbb{Q}} \frac{\left\langle \supp s(x) \cup  \supp P
      \right\rangle}
    {\llangle \supp P \rrangle}
    \leq
    n .
  \end{equation}
In particular, if the coefficients of $P$ are formal power series with integral
exponents, then the maximal number of rationally independent irrational exponents that
can appear in $\supp s(x)$ is the order $n$ of the equation $P=0$.


The second result of this paper address the question of the convergence of the solution
$s(x)$ provided $P$ is a polynomial in $x,y_0,\ldots,y_n$. It is well known that the
convergence of a solution is not guaranteed: Euler's example $y-x^{2}\,y'-x=0$ has as
solution the formal power series $\varepsilon(x)=\sum_{n\geq 0} n!\, x^{n+1}$. B. Malgrange
in~\cite{Malgrange:1989} gives a sufficient condition for a formal power series solution
$s(x)$ of a polynomial differential equations $P=0$ to be convergent in terms of the
linearized differential operator along the solution,
$L=\sum_{i=0}^{n}\frac{\partial P}{\partial y_i}(s(x),\ldots,s^{(n)}(x))\,\delta^{i}$: if
$L$ has a regular singularity at the origin then the solution $s(x)$ is convergent.  This
criterion has been extended in the differential case by R.~Gontsov and
I.~Goryunchkina
in~\cite{gontsov-goryuchkina2015}
and in the $q$-difference case by R.~Gontsov and I.~Goryunchkina and A. Lastra
in~\cite{Gontsov2022579} 
for power series solutions
with complex exponents.
In Theorem~\ref{the:main2} of this paper we provide a new
sufficient condition for the convergence of the solution $s(x)$ which only depends on the
order $n$ of the differential or $q$-difference equation $P=0$, and on the support of the
solution. Specifically, if $\langle\supp s(x)\rangle$
has the maximal possible rational rank
(Equation~\eqref{eq:IntroDesigualdadTh1} is an equality)
then $s(x)$ is necessarily convergent.
In particular, if $P$ is of order one and with constant coefficients,
any solution of the equation $P=0$ is convergent.
Theorem~\ref{the:main2} provides a negative criterion for a series
to be a solution of a differential or $q$-difference equation. For
instance,  neither the series
$x^{\pi}+\varepsilon(x)$ nor $x^{\pi}\,\varepsilon(x)$ (where
$\varepsilon(x)$ is as above) is a solution
of a nonlinear first order differential (or $q$-difference) equation
with convergent coefficients.

Our third main result addresses the question of the possibility of completing an initial
segment $s_0(x)=\sum_{1\leq i\leq k} c_i\,x^{\mu_i}$ to an actual power series solution
$s(x)$ solution of the equation $P=0$. Given an equation $P=0$,
the Newton polygon method
(\cite{Grigoriev-Singer,Cano-Fortuny-Asterisque,Cano-Fortuny-q-diff-2022}),
provides, for any positive integer $k$,
a finite family of necessary initial conditions 
$\NIC_k(P)$, such that the first $k$ coefficients and exponents
of any solution $\sum_{i=1}^{\infty}a_i\,x^{\nu_i}$ of $P=0$ satisfy
$\NIC_k(P)$.  We say that $s_0(x)$ is an \emph{admissible initial
  segment} for $P=0$ if
its coefficients and exponents satisfy $\NIC_k(P)$.

In the algebraic case (i.e. $n=0$), Puiseux's Theorem shows that any admissible initial
segment for $P=0$ is the truncation of an actual solution.  This is no longer true for
differential and $q$-difference equations:
even in order and degree one, there are examples
\cite{CanoJ2,barbe2020qalgebraic}
of admissible initial segments which cannot be completed to a solution.
Section 3 of~\cite{Grigoriev-Singer} introduces the concept of \emph{stabilization} as a
criterion which guarantees the existence (and uniqueness) of an actual solution with
initial segment $s_0(x)$. In Theorem~\ref{the:main3}
we introduce a criterion of a different
nature: if the support of an admissible polynomial $s_0(x)$ for $P=0$
has $n$ linearly independent irrational numbers over $\mathbb{Q}$
(modulo  $\langle\supp P\rangle$) then
$s_0(x)$ is in fact the truncation of an actual solution of
$P=0$. Indeed, there can be more than one solution $s(x)$ with the
same truncation $s_0(x)$, unlike the stabilization criterion, for
which the solution is unique
(see example in Section~\ref{sec:example}).
In particular, if $P=P(x,y)=0$ is a
polynomial in $x,y$ (i.e. the equation of an algebraic curve), then the hyphothesis of
Thereom~\ref{the:main3} always hold, so that it may be understood as a
kind of
generalization of Puiseux's Theorem to differential and $q$-difference
equations.

Section~\ref{sec:NotationStatements} introduces the context and a technical result from
which the two main theorems follow in Section \ref{sec:twotheorems}. Then, in
Section~\ref{sec:NewtonPolygon} we recall the essential notions and results on the Newton polygon
process (see \cite{Grigoriev-Singer,CanoJ,CanoJ2,Cano-Fortuny-Asterisque,Cano-Fortuny-q-diff-2022,barbe2020qalgebraic}), which we use
to prove the technical Proposition~\ref{prop:main}, from which Theorem \ref{the:main3}
follows straightforwardly.
In subsection~\ref{subsec:autFirstOrderDiffODEs} we apply the
previous results to the case of autonomous ordinary differential equations.
In Section~5 we give a detailed example to
illustrate our main results.

Notice that instead of generalized power series in $x$, we could have worked with
generalized power series in $x^{-1}$. The results and proves are the
same with the obvious modifications.

\section{Notation and a Technical Lemma}\label{sec:NotationStatements}
Following \cite{Grigoriev-Singer}, we denote by
$\Omega$ the field of formal power series $s(x)=\sum c_ix^{\mu_i}$
with complex coefficients and real exponents, such
that
$\lim_{i\rightarrow\infty}\mu_{i}=\infty$, where we always assume
that $\mu_1<\mu_2<\cdots$.
The \emph{support} of a power series $s(x)$ is the set of exponents with non-zero coefficient:
$\supp s(x) = \{\mu_i\ :\ c_i\neq 0\}$.
Given an additive subgroup
$\Gamma$ of $\mathbb{R}$
we denote
by $\mathbb{C}((x^{\Gamma}))\subset \Omega$ the subfield  of
series $s(x)\in \Omega$ whose support is contained in $\Gamma$.
We also denote by
$\hat{K}=\mathbb{C}((x^{\mathbb{Z}}))$ the
field of fractions of the ring of formal power series.

We are going to deal
with differential and $q$-difference equations simultaneously,
hence it should be more convenient to use the Euler derivative in the
case of differential equations, but some of our results are specific
for the case of differential equations with the ordinary derivation, in
particular, the case of autonomous equations. Hence we deal
simultaneously with differential equations in terms of the ordinary
differential operator, in terms of the Euler differential operator and with
$q$-difference equations, with $|q|\neq 1$.

Thorought this paper $P$ will denote a polynomial $P(y_0,\ldots, y_{n})$ in the
indeterminates $y_0,y_1,\ldots,y_n$ with coefficients in $\mathbb{C}((x^{\Gamma}))$ for
$\Gamma\subset \mathbb{R}$ as above.  Given $s(x)\in \Omega$, we set
$P(s(x))=P(s(x),s'(x),\ldots,s^{(n)}(x))$, where $s'(x)$ is either
the ordinary derivative $\frac{d\,s(x)}{d\,x}$, the
Euler derivative $x\,\frac{d\,s(x)}{d\,x}$ or the $q$-difference
operator $s(q\,x)$, and $s^{(\kappa)}(x)$ is the $\kappa$-th iteration.
We do not consider mixed equations.

If $s(x)=\sum_{i=0}^{\infty} c_i\, x^{\mu_i}$, and $\kappa=0,\ldots,n$
then we write the $\kappa$-th iteration of the operator $'$ as:
\begin{displaymath}
  s^{(\kappa)}(x)=\sum_{i=0}^{\infty} c_i\, \dmuk{\mu_i}{\kappa}
  \,x^{\mu_i-\shift \kappa},
\end{displaymath}
where $\shift=1$ in the case of differential equations with
the ordinary differential operator $\frac{d\,}{d\,x}$ or $\shift=0$ 
for differential equations with the Euler differential operator, and
$q$-difference equations;
the symbol $\dmuk{\mu}{\kappa}$ is defined as
$\dmuk{\mu}{\kappa}=\mu\,(\mu-1)\cdots(\mu-\kappa+1)$ for
differential equations with the operator $\frac{d\,}{d\,x}$,
$\dmuk{\mu}{\kappa}=\mu^{\kappa}$ for
differential equations with the Euler differential operator
$x\,\frac{d\,}{d\,x}$,
and $\dmuk{\mu}{\kappa}=q^{\kappa\,\mu}$ for $q$-difference
equations.
In particular $\dmuk{\mu}{0}=1$ in all cases, and we denote
$\dmu{\mu}=\dmuk{\mu}{1}$.

The expression $P=0$ denotes the corresponding
equation of order $n$.  A \emph{solution} of $P=0$ is a power series $s(x)$ such
that $P(s(x),s^{\prime}(x),\ldots,s^{(n)}(x))=0$.
The \emph{support} of $P$ is the union of
the supports of its coefficients.

Our first two results rely on the following elementary fact about derivations with respect
to \emph{transcendent monomials}.
Fix $\Gamma$ an additive subgroup of $\mathbb{R}$
and consider a family of exponents
$\mu_1,\ldots,\mu_m\in \mathbb{R}$ whose classes modulo $\llangle\Gamma\rrangle$,
$\bar{\mu}_1,\ldots,\bar{\mu}_m\in \mathbb{R}/\langle \Gamma\rangle $ are
$\mathbb{Q}$-linearly independent. Let
$\Gamma'=\langle \Gamma\cup\{\mu_1,\ldots,\mu_m\}\rangle$. By definition, any
$\alpha\in \Gamma'$ can be written uniquely as
  \[
    \alpha=[\alpha]_\Gamma+\sum_{j=1}^{m} [\alpha]_{\mu_j}\,\mu_j,
  \]
  where $[\alpha]_\Gamma\in \llangle\Gamma\rrangle$ and $[\alpha]_{\mu_j}\in \mathbb{Q}$.
  \begin{lemma}\label{le:derivations-x^mu}
    With the above notation and conditions, for any $j=1,\ldots,m$,
    there exists
    derivation  $\DD_j$ on
    $\mathbb{C}((x^{\Gamma'}))$
    such
    that
  \begin{displaymath}
   \DD_j\left(
     \sum_{i=1}^{\infty}c_i\,x^{\alpha_i}\right)=
     \sum_{i=1}^{\infty}c_i\,[\alpha_i]_{\mu_j}\,x^{\alpha_i}.
  \end{displaymath}
\end{lemma}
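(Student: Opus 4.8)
The plan is to write $\DD_j$ down explicitly. On monomials I would set $\DD_j(x^\alpha)=[\alpha]_{\mu_j}\,x^\alpha$ for $\alpha\in\Gamma'$, and extend it coefficientwise to series, which gives exactly the formula in the statement. The first point to settle is that this is a well-defined self-map of $\mathbb{C}((x^{\Gamma'}))$: since $\DD_j$ merely rescales each coefficient by a rational number (possibly by $0$) and does not touch the exponents, one has $\supp\DD_j(s)\subseteq\supp s$, so the support of $\DD_j(s)$ is still a subset of $\Gamma'$ with no finite accumulation point and tending to $+\infty$; thus $\DD_j(s)\in\mathbb{C}((x^{\Gamma'}))$. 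The $\mathbb{C}$-linearity of $\DD_j$ is then immediate from the defining formula, and $[0]_{\mu_j}=0$ shows $\DD_j$ vanishes on $\mathbb{C}$.

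The substance of the proof is the Leibniz rule $\DD_j(s\,t)=\DD_j(s)\,t+s\,\DD_j(t)$. Here I would use that, by the \emph{uniqueness} of the decomposition $\alpha=[\alpha]_\Gamma+\sum_{l=1}^{m}[\alpha]_{\mu_l}\,\mu_l$ recorded just before the statement, the coordinate map $\Gamma'\to\mathbb{Q}$, $\alpha\mapsto[\alpha]_{\mu_j}$, is $\mathbb{Q}$-linear, hence additive: $[\alpha+\beta]_{\mu_j}=[\alpha]_{\mu_j}+[\beta]_{\mu_j}$. Writing $s=\sum_\alpha c_\alpha x^\alpha$ and $t=\sum_\beta d_\beta x^\beta$, the coefficient of $x^\gamma$ in the product $s\,t$ is $\sum_{\alpha+\beta=\gamma}c_\alpha d_\beta$, a finite sum because for fixed $\gamma$ only finitely many $\alpha\in\supp s$ satisfy $\gamma-\alpha\in\supp t$. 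So the coefficient of $x^\gamma$ in $\DD_j(s\,t)$ is $[\gamma]_{\mu_j}\sum_{\alpha+\beta=\gamma}c_\alpha d_\beta$, whereas in $\DD_j(s)\,t+s\,\DD_j(t)$ it is
\[
\sum_{\alpha+\beta=\gamma}[\alpha]_{\mu_j}\,c_\alpha d_\beta+\sum_{\alpha+\beta=\gamma}c_\alpha\,[\beta]_{\mu_j}\,d_\beta
=\sum_{\alpha+\beta=\gamma}\bigl([\alpha]_{\mu_j}+[\beta]_{\mu_j}\bigr)c_\alpha d_\beta
=[\gamma]_{\mu_j}\sum_{\alpha+\beta=\gamma}c_\alpha d_\beta ,
\]
using additivity in the last step. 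The two expressions agree for every $\gamma$, so the Leibniz rule holds and $\DD_j$ is a derivation of $\mathbb{C}((x^{\Gamma'}))$.

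I do not expect a genuine obstacle: morally $\DD_j$ is just $x^{\mu_j}\,\partial/\partial x^{\mu_j}$ once $x^{\mu_1},\dots,x^{\mu_m}$ are viewed as algebraically independent ``transcendent monomials'' over $\mathbb{C}((x^{\Gamma}))$, and that independence is precisely what the $\mathbb{Q}$-linear independence of $\bar\mu_1,\dots,\bar\mu_m$ modulo $\langle\Gamma\rangle$ provides. The only points deserving a line of care are (i) the well-definedness of $\DD_j$ as a map \emph{into} $\mathbb{C}((x^{\Gamma'}))$, handled above by the support inclusion, and (ii) the finiteness of the convolution sums defining products, which is the standard Hahn / Malcev--Neumann bookkeeping already implicit in the field structure of $\Omega$.
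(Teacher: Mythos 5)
Your proposal is correct and follows the same route as the paper: the paper's proof simply asserts that the map is well defined thanks to the linear independence of the classes (equivalently, the uniqueness of the decomposition $\alpha=[\alpha]_\Gamma+\sum_l[\alpha]_{\mu_l}\mu_l$) and that linearity and the Leibniz rule follow from a straightforward computation, which is exactly the computation you carry out. You have merely made explicit the support bookkeeping and the additivity of $\alpha\mapsto[\alpha]_{\mu_j}$ that the paper leaves to the reader.
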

\begin{proof} The expression above is obviously a well-defined map in
  $\mathbb{C}((x^{\Gamma^{\prime}}))$, due to the linear independence of the classes. A
  straightforward computation shows that it is linear and satisfies the Leibniz rule.
\end{proof}
Let us remark that the above derivation $
\DD_j$
depends on $\Gamma$ and
$\mu_1,\ldots,\mu_m$ although we do not stress this fact in the
notation. Notice also that $\DD_j$ vanishes on
$\mathbb{C}((x^{\Gamma}))$ and on $\mathbb{C}((x^{\Gamma_{j-1}}))$, for
$\Gamma_{j-1}=\langle \Gamma\cup
\{\mu_1,\ldots,\mu_{j-1}\}\rangle$.
Let us give some examples:
  Assume that $\Gamma=\{0\}$, $\mu_1=1$ and $\mu_2=\pi$, hence
  $\mathcal{D}_1(x^{2+3\,\pi})=2\,x^{2+3\,\pi}$ and
  $\mathcal{D}_2(x^{2+3\,\pi})=3\,x^{2+3\,\pi}$; now assume that
  $\Gamma=\pi\,\mathbb{Z}$, $\mu_1=2$ and $\mu_2=e$, hence
  $\mathcal{D}_1(x^{2+3\pi/2})=x^{2+3\pi/2}$,
  $\mathcal{D}_2(x^{2+3\pi/2})=0$, and
  $\mathcal{D}_2(x^{\pi+5\,e})=5\,x^{\pi+5\,e}$.
  
In what follows, we shall not usually work with the substitution $P(s(x))$ for
$s(x)\in \Omega$, but with the more useful ``full substitution'':
\begin{displaymath}
  P[s(x)]=P(s(x)+y_0,s'(x)+y_1,\ldots,s^{(n)}(x)+y_n)\in \Omega[y_0,y_1,\ldots,y_n].
\end{displaymath}
By the chain rule:
\begin{displaymath}
  \frac{\partial P}{\partial y_\kappa}[s(x)]=
  \frac{\partial }{\partial y_\kappa}(P[s(x)]),\quad
  \kappa=0,1,\ldots,n.
\end{displaymath}

For $P\in \mathbb{C}((x^{\Gamma}))[y_0,\ldots,y_n]$ and $\mu_1,\ldots,\mu_m$ and $\Gamma'$
as in Lemma~\ref{le:derivations-x^mu}, take $s(x)\in
\mathbb{C}((x^{\Gamma'}))$. Extending the derivation $
\DD_j$ to the ring
$\mathbb{C}((x^{\Gamma'}))[y_0,\ldots,y_n]$ by acting trivially on the
indeterminates $y_0,y_1,\ldots,y_n$ we get,
as the $\DD_j$ are derivations 
vanishing  on
$\mathbb{C}((x^{\Gamma}))$,
the equality:
\begin{equation}
  \label{eq:1}
  \DD_j\left( P[s(x)]\right)=
  \sum_{\kappa=0}^{n}
  \frac{\partial\,}{\partial y_\kappa}(P[s(x)])
  \,\DD_j(s^{(\kappa)}(x)).
\end{equation}

We introduce some notation useful for studying the coefficients of $P[s(x)]$. Given
$\rho=(\rho_0,\ldots,\rho_n)\in \mathbb{Z}_{\geq 0}^{n+1}$, the expression $P[s(x)]_{\rho}$ will
denote the coefficient of $y_0^{\rho_0}y_1^{\rho_1}\cdots y_n^{\rho_n}$ in $P[s(x)]$. We
shall use the basic vectors $e_0=(1,0,\ldots, 0),\ldots, e_n=(0,\ldots,0,1)$, each having
$n+1$ components (we start at $0$ because $e_{i}$ will be related to $y_i$). Examining
each term on both sides of \eqref{eq:1}, we obtain the key equality:
\begin{equation}
  \label{eq:2}
   \DD_j(P[s(x)]_{\rho})=
  \sum_{\kappa=0}^{n}(\rho_\kappa+1)
  (P[s(x)]_{\rho+e_{\kappa}})
  \,\DD_j(s^{(\kappa)}(x)).
\end{equation}

\section{Solutions and Rational Rank}\label{sec:twotheorems}
Our first  theorem follows mainly from
Lemma~\ref{le:derivations-x^mu} and Equality \eqref{eq:2}.
\begin{theorem}\label{the:main1}
  Let $P\in \Omega[y_0,\ldots,y_n]$ be a non-zero polynomial, and $s(x)\in \Omega$ a
  solution of the equation $P=0$. Then:
  \begin{equation*}
    \dim_{\mathbb{Q}} \frac{\left\langle \supp s(x) \cup  \supp P 
      \right\rangle}
    {\llangle \supp P\rrangle}
    \leq
    n .
  \end{equation*}
\end{theorem}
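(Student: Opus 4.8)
The plan is to argue by contradiction. Suppose the dimension on the left exceeds $n$; then we can pick exponents $\mu_1,\dots,\mu_{n+1}\in\supp s(x)$ whose classes modulo $\langle\supp P\rangle$ are $\mathbb{Q}$-linearly independent. Set $\Gamma=\langle\supp P\rangle$ and $\Gamma'=\langle\Gamma\cup\{\mu_1,\dots,\mu_{n+1}\}\rangle$, and note $s(x)$ and all its transforms $s^{(\kappa)}(x)$ lie in $\mathbb{C}((x^{\Gamma'}))$ (since the operators $'$ only shift exponents by elements of $\mathbb{Z}\subseteq\Gamma$ and multiply coefficients by scalars $\delta_\mu^{(\kappa)}$, which do not change the support structure modulo $\Gamma$). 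Apply Lemma~\ref{le:derivations-x^mu} to obtain derivations $\DD_1,\dots,\DD_{n+1}$ on $\mathbb{C}((x^{\Gamma'}))$.

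Next I would exploit Equality~\eqref{eq:1}. Since $s(x)$ is a solution, $P[s(x)]_{(0,\dots,0)}=P(s(x))=0$, and more relevantly $\DD_j$ annihilates the constant term. The key move is to apply the $\DD_j$ not to $P[s(x)]$ itself but to track what happens to $\DD_j(s^{(\kappa)}(x))$. The crucial observation is that for each transform, $\DD_j(s^{(\kappa)}(x))$ is obtained from $s^{(\kappa)}(x)$ by the "same" coefficientwise multiplication by $[\alpha]_{\mu_j}$; one then assembles the $(n+1)\times(n+1)$ matrix whose $(j,\kappa)$ entry records $\DD_j(s^{(\kappa)}(x))$ against the system \eqref{eq:1} for $j=1,\dots,n+1$. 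If one can show this matrix forces a nontrivial linear dependence that contradicts $P\neq 0$ (using that the $\partial P/\partial y_\kappa[s(x)]$ cannot all vanish identically when $P\neq 0$, or rather analysing the lowest-order terms), one is done. Concretely, I expect the argument to run: since $n+1$ derivations $\DD_j$ each give a relation of the form \eqref{eq:1} with the same coefficient functions $\partial_{y_\kappa}(P[s(x)])$ but $n+1$ different "weight vectors" $(\DD_j(s^{(\kappa)}(x)))_\kappa$, and since these weight vectors — being built from the $n+1$ independent weights $[\cdot]_{\mu_j}$ — span too large a space, one forces $\partial_{y_\kappa}(P[s(x)])=0$ for all $\kappa$, hence (integrating, using that the full substitution recovers $P$) that $P$ is constant on the relevant fiber, ultimately contradicting $P\neq 0$.

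I would make the last step precise by induction on $j$ using the remark after Lemma~\ref{le:derivations-x^mu}: $\DD_j$ vanishes on $\mathbb{C}((x^{\Gamma_{j-1}}))$ where $\Gamma_{j-1}=\langle\Gamma\cup\{\mu_1,\dots,\mu_{j-1}\}\rangle$. One first treats $\DD_{n+1}$ (or $\DD_1$) to peel off one variable $y_\kappa$, showing some $\partial P/\partial y_{\kappa}$ must vanish along the solution in a suitable truncated sense, then descends. The bookkeeping that makes this work is Equality~\eqref{eq:2}, which propagates the vanishing from $P[s(x)]_\rho$ to the neighbors $P[s(x)]_{\rho+e_\kappa}$: starting from $P[s(x)]_{(0,\dots,0)}=0$ and applying the $\DD_j$ repeatedly, one shows $P[s(x)]_\rho=0$ for all $\rho$, i.e. $P[s(x)]\equiv 0$ in $\Omega[y_0,\dots,y_n]$, whence $P\equiv 0$, the desired contradiction.

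The main obstacle I anticipate is the combinatorial/linear-algebra core: showing that having $n+1$ independent derivations kills all the coefficients $P[s(x)]_\rho$. The subtlety is that $\DD_j(s^{(\kappa)}(x))$ need not be "independent enough" in an obvious way — one must use that the weights $[\alpha]_{\mu_j}$ are genuine $\mathbb{Q}$-coordinates and that a Vandermonde-type or triangularity argument (exploiting that $\DD_j$ vanishes on $\mathbb{C}((x^{\Gamma_{j-1}}))$) lets one invert the system termwise. Getting this argument to be genuinely elementary, rather than invoking differential-algebra machinery, is the crux, and is presumably where the authors' stated claim of an "elementary" proof does its real work.
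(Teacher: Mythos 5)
Your opening moves match the paper's (argue by contradiction, take $n+1$ classes independent modulo $\langle\supp P\rangle$, use the derivations of Lemma~\ref{le:derivations-x^mu} together with \eqref{eq:1}--\eqref{eq:2}), but two essential pieces are missing or wrong as stated. First, you cannot apply the $\DD_j$ to $s(x)$ itself the way you propose: $\Gamma'=\langle\Gamma\cup\{\mu_1,\dots,\mu_{n+1}\}\rangle$ need not contain $\supp s(x)$ (if the rank genuinely exceeded $n+1$, or were infinite, it certainly would not), and your justification that the operators ``only shift exponents by elements of $\mathbb{Z}\subseteq\Gamma$'' fails because $\mathbb{Z}\subseteq\Gamma$ need not hold (e.g.\ constant coefficients). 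The paper gets around this by working with a truncation $\bar{s}(x)$ of $s(x)$ chosen so that $\ord(s-\bar s)$ and $\ord_x P(\bar s)$ are large, rewriting $\bar s$ as in \eqref{eq:r(x)} so that each $\mu_j$ is the \emph{first} exponent of the solution outside $\Gamma_{j-1}$, and adjoining $1$ to the group when necessary. That minimal (greedy) choice of the $\mu_j$ is not cosmetic: it is what guarantees $\DD_j(\bar s^{(\kappa)}(x))=c_{j,0}\,\dmuk{\mu_j}{\kappa}\,x^{\mu_j-\shift\kappa}+\cdots$ with $c_{j,0}\neq 0$; with an arbitrary choice of independent exponents in the support, as in your plan, the lowest-order terms of the $\DD_j(s^{(\kappa)})$ are not controlled and the subsequent leading-term analysis can degenerate.

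Second, the linear-algebra core — which you explicitly defer as ``the main obstacle'' — is exactly where the proof lives, and the target you aim at is not the one the paper reaches. The paper does not conclude $P[s(x)]\equiv 0$ (which is false in general for a nonzero $P$ with a solution); instead it first reduces, by induction on the total degree of $P$, to the case where some $\frac{\partial P}{\partial y_\kappa}(s(x))\neq 0$, sets $\mina=\min_\kappa\bigl(\ord \frac{\partial P}{\partial y_\kappa}(s(x))-\shift\kappa\bigr)$ with leading coefficients $d_\kappa$, and then extracts from the coefficient of $x^{\mina+\mu_j}$ in \eqref{eq:2_0000} the $n+1$ scalar equations $\sum_{\kappa=0}^{n}d_\kappa\,\dmuk{\mu_j}{\kappa}=0$, $j=1,\dots,n+1$. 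The matrix $(\dmuk{\mu_j}{\kappa})$ is a Vandermonde matrix (after column reduction in the $\frac{d}{dx}$ case), so all $d_\kappa=0$, contradicting the choice of $\mina$. In your scheme this same computation is what would be needed to show that the matrix $(\DD_j(s^{(\kappa)}))_{j,\kappa}$ is invertible over the series field before any ``propagation of vanishing'' through \eqref{eq:2} can begin; since you neither perform the degree-reduction nor supply the Vandermonde argument (you only conjecture ``a Vandermonde-type or triangularity argument''), the proposal stops short of a proof at precisely the decisive step.
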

Thus we obtain a bound for the rational rank of solutions of equations
with coefficients Puiseux series.
\begin{corollary}
  If $s(x)\in \Omega$ is a generalized power series with
  \begin{equation*}
    \dim_{\mathbb{Q}}\frac{\langle\supp s(x)\cup \mathbb{Q}\rangle}
    {\mathbb{Q}} > n  
  \end{equation*} 
  then $s(x)$ is not the solution of a nontrivial  equation
  of order $n$ over the fraction field of the ring of Puiseux series.
\end{corollary}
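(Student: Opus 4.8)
The plan is to prove the contrapositive and reduce directly to Theorem~\ref{the:main1}. Assume, for contradiction, that $s(x)$ is a solution of a nontrivial equation $P=0$ of order $n$ whose coefficients lie in the fraction field of the ring of Puiseux series. First I would record two elementary facts: (i) this fraction field is contained in $\Omega$, so that $P\in\Omega[y_0,\ldots,y_n]$ and Theorem~\ref{the:main1} applies; and (ii) every Puiseux series has support inside $\tfrac1N\mathbb{Z}$ for some positive integer $N$, and since $P$ has only finitely many coefficients there is a common $N$ with $\supp P\subseteq\tfrac1N\mathbb{Z}$; in particular $\langle\supp P\rangle\subseteq\mathbb{Q}$.

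The second step is a short piece of $\mathbb{Q}$-linear algebra. Put $W=\langle\supp s(x)\cup\supp P\rangle$ and $U=\langle\supp P\rangle\subseteq\mathbb{Q}$. The composite $W\hookrightarrow\mathbb{R}\twoheadrightarrow\mathbb{R}/\mathbb{Q}$ has kernel $W\cap\mathbb{Q}\supseteq U$, hence factors through a surjection $W/U\twoheadrightarrow(W+\mathbb{Q})/\mathbb{Q}$. Since $U\subseteq\mathbb{Q}$ we have $W+\mathbb{Q}=\langle\supp s(x)\cup\supp P\cup\mathbb{Q}\rangle=\langle\supp s(x)\cup\mathbb{Q}\rangle$, so comparing dimensions gives
\[
  \dim_{\mathbb{Q}}\frac{\langle\supp s(x)\cup\mathbb{Q}\rangle}{\mathbb{Q}}
  \;\le\;
  \dim_{\mathbb{Q}}\frac{\langle\supp s(x)\cup\supp P\rangle}{\langle\supp P\rangle}.
\]
By Theorem~\ref{the:main1} the right-hand side is at most $n$, contradicting the hypothesis that the left-hand side is strictly larger than $n$. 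The same chain also covers the degenerate case $\supp P\subseteq\{0\}$ of constant coefficients, where $U=\{0\}$ and the inequality simply reads $\dim_{\mathbb{Q}}\langle\supp s(x)\cup\mathbb{Q}\rangle/\mathbb{Q}\le\dim_{\mathbb{Q}}\langle\supp s(x)\rangle$.

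There is no serious obstacle here: all the substance lies in Theorem~\ref{the:main1}, and the only points requiring care are that ``Puiseux coefficients'' genuinely forces $\langle\supp P\rangle\subseteq\mathbb{Q}$ and the elementary identification of the two quotient spaces appearing in the surjection above. In fact one never uses $P$ beyond the single feature that its support spans a $\mathbb{Q}$-subspace of $\mathbb{Q}$, so the statement holds verbatim over any coefficient field whose elements have support in $\mathbb{Q}$.
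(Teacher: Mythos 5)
Your proposal is correct and follows essentially the same route as the paper, which presents this corollary as an immediate consequence of Theorem~\ref{the:main1}: coefficients in the fraction field of the Puiseux series ring lie in $\Omega$ and have support in $\mathbb{Q}$, so $\langle\supp P\rangle\subseteq\mathbb{Q}$, and the bound of Theorem~\ref{the:main1} transfers to the quotient modulo $\mathbb{Q}$. Your explicit surjection $W/U\twoheadrightarrow (W+\mathbb{Q})/\mathbb{Q}$ is exactly the (unstated) comparison the paper relies on, so there is nothing to add or correct.
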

\begin{corollary}
  If $s(x)\in \Omega$ is a generalized power series with
  \begin{equation*}
    \dim_{\mathbb{Q}}\langle\supp s(x)\rangle > n  
  \end{equation*} 
  then $s(x)$ is not the solution of a nontrivial equation $P=0$,
  of order $n$ with constant coefficients.
\end{corollary}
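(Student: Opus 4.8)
The plan is to argue by contradiction. Suppose $\dim_{\mathbb{Q}}\Gamma'/\Gamma\ge n+1$, where $\Gamma=\llangle\supp P\rrangle$ and $\Gamma'=\llangle\supp s(x)\cup\supp P\rrangle$, the latter enlarged by $\mathbb{Z}$ when $'$ is the ordinary derivative, so that $s(x),s'(x),\dots,s^{(n)}(x)$ all lie in $\mathbb{C}((x^{\Gamma'}))$. Since the classes of $\supp s(x)$ generate $\Gamma'/\Gamma$, I can choose a $\mathbb{Q}$-subspace $\tilde\Gamma$ with $\Gamma\subseteq\tilde\Gamma\subseteq\Gamma'$, together with exponents $\mu_1,\dots,\mu_{n+1}$ whose classes form a basis of $\Gamma'/\tilde\Gamma$, and apply Lemma~\ref{le:derivations-x^mu} with $\tilde\Gamma$ in place of $\Gamma$ to obtain derivations $\DD_1,\dots,\DD_{n+1}$ on $\mathbb{C}((x^{\Gamma'}))$. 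Each $\DD_j$ vanishes on $\mathbb{C}((x^{\tilde\Gamma}))\supseteq\mathbb{C}((x^{\Gamma}))$, hence on the coefficients of $P$, so Equality~\eqref{eq:2} holds for every $\DD_j$. The aim is to force $P[s(x)]\equiv 0$ in $\Omega[y_0,\dots,y_n]$, which is absurd because $P\ne 0$ and the full substitution is an automorphism of $\Omega[y_0,\dots,y_n]$.

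Next, the reduction. The $y$-free term of $P[s(x)]$ is $P(s(x))=0$, while $P[s(x)]\ne 0$; let $d\ge 1$ be minimal such that some coefficient $P[s(x)]_\rho$ with $|\rho|=d$ is nonzero. For every $\rho$ with $|\rho|=d-1$ one has $P[s(x)]_\rho=0$, so \eqref{eq:2} gives $\sum_{\kappa=0}^{n}(\rho_\kappa+1)\,P[s(x)]_{\rho+e_\kappa}\,\DD_j(s^{(\kappa)}(x))=0$ for $j=1,\dots,n+1$. If the $(n{+}1)\times(n{+}1)$ matrix $W=\bigl(\DD_j(s^{(\kappa)}(x))\bigr)_{1\le j\le n+1,\ 0\le\kappa\le n}$ is invertible over the field $\mathbb{C}((x^{\Gamma'}))$, each of these systems forces $P[s(x)]_{\rho+e_\kappa}=0$ for all $\kappa$; letting $\rho$ range over all multi-indices of size $d-1$ kills every degree-$d$ coefficient of $P[s(x)]$, contradicting minimality of $d$. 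Thus everything reduces to showing $\det W\ne 0$, which is the heart of the matter.

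For the Euler and $q$-difference cases ($\shift=0$) this is straightforward: $\DD_j$ commutes with $'$ (both act diagonally on the monomials $x^\alpha$), so $W_{j\kappa}=(\DD_j s(x))^{(\kappa)}=\sum_{\mu\in\supp s(x)}c_\mu\,[\mu]_{\mu_j}\,\dmuk{\mu}{\kappa}\,x^{\mu}$, where $[\mu]_{\mu_j}\in\mathbb{Q}$ is the $\mu_j$-coordinate. A Cauchy--Binet expansion of $\det W$ over $\supp s(x)$ yields a locally finite sum over $(n{+}1)$-element subsets $I\subseteq\supp s(x)$ of terms $\det\bigl([\mu]_{\mu_j}\bigr)_{j,\,\mu\in I}\,\det\bigl(\dmuk{\mu}{\kappa}\bigr)_{\mu\in I,\,\kappa}\,\prod_{\mu\in I}c_\mu x^{\mu}$. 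The second determinant is a Vandermonde in the $\mu$'s (respectively in the $q^{\mu}$'s, nonzero since $|q|\ne 1$), hence nonvanishing; the first is nonzero precisely when the elements of $I$ are $\mathbb{Q}$-independent modulo $\tilde\Gamma$. Those subsets are the bases of a rank-$(n{+}1)$ matroid on $\supp s(x)$, and since the exponents in $\supp s(x)$ are pairwise distinct, this matroid has a \emph{unique} minimum-weight basis $I^{*}$. Therefore the coefficient of $x^{\sum_{\mu\in I^{*}}\mu}$ in $\det W$ is a single nonzero product, with no cancellation from any lower order, and so $\det W\ne 0$.

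The ordinary-derivative case ($\shift=1$) is the delicate one, and the place where I expect the real difficulty. Substituting $s^{(\kappa)}(x)=x^{-\kappa}[\theta]_\kappa s(x)$ with $\theta=x\,d/dx$ rewrites $P=0$ as an equivalent equation in the Euler operator whose support is contained in $\llangle\supp P\cup\mathbb{Z}\rrangle$, so the previous paragraph already proves the theorem when $1\in\llangle\supp P\rrangle$, in particular for power-series or Laurent coefficients. In general $\DD_j$ does not commute with $d/dx$: one checks $[\DD_j,\,d/dx]=-[\,1\,]_{\mu_j}\,d/dx$, whence $W_{j\kappa}=(\DD_j s(x))^{(\kappa)}-\kappa\,[\,1\,]_{\mu_j}\,s^{(\kappa)}(x)$. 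Whenever $\tilde\Gamma$ may be chosen to contain $1$ — which is possible except in the borderline situation where the dimension in the statement is exactly $n+1$ and $1\notin\llangle\supp P\rrangle$ — all the $[\,1\,]_{\mu_j}$ vanish and the argument above applies verbatim. In the remaining case I would take $\mu_1=1$, so that only the first row of $W$ is perturbed, and expand along it: $\det W=\mathcal{W}\bigl(\DD_1 s(x),\dots,\DD_{n+1}s(x)\bigr)-\det W'$, where $\mathcal{W}$ is the ordinary Wronskian, which is nonzero because $\DD_1 s(x),\dots,\DD_{n+1}s(x)$ are $\mathbb{C}$-linearly independent (their classes span $\Gamma'/\tilde\Gamma$), and $W'$ is $W$ with first row replaced by $\bigl(\kappa\,s^{(\kappa)}(x)\bigr)_{0\le\kappa\le n}$. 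The main obstacle is then the valuation estimate showing that $\det W'$ has strictly larger order at $x=0$ than $\mathcal{W}$, so that the correction cannot cancel the Wronskian; the mechanism behind this is that, at the lowest orders, rows $2,\dots,n+1$ of the minors of $W'$ are forced to be proportional to one and the same vector, which raises the order of $\det W'$.
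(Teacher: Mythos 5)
Your strategy is genuinely different from the paper's: the paper obtains this corollary in one line from Theorem~\ref{the:main1} (constant coefficients give $\langle\supp P\rangle=\{0\}$), whose proof only needs the \emph{leading terms} of $\DD_j(\bar{s}^{(\kappa)})$ for a truncation $\bar s$ and a single Vandermonde system in the $n+1$ constants $d_\kappa$. You instead try to prove directly that $P[s(x)]\equiv 0$ by showing the full $(n+1)\times(n+1)$ matrix $W=\bigl(\DD_j(s^{(\kappa)}(x))\bigr)$ is invertible over $\mathbb{C}((x^{\Gamma'}))$ and then killing the coefficients of $P[s(x)]$ degree by degree via~\eqref{eq:2}. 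For the Euler and $q$-difference operators your Cauchy--Binet plus unique-minimum-weight-basis argument does establish $\det W\neq 0$, and the degree induction is sound, so that part works (and proves a stronger statement than needed).

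The gap is exactly where you flag it, and the fix you sketch is not merely missing but false. In the ordinary-derivative borderline case ($\dim_{\mathbb{Q}}\langle\supp s(x)\rangle=n+1$ with $1\in\langle\supp s(x)\rangle$ — which contains the paper's motivating example), take $n=1$, $s(x)=x+x^{\pi}$, $\mu_1=1$, $\mu_2=\pi$. Then $\DD_1s=x$, $\DD_2s=x^{\pi}$, so $\mathcal{W}(\DD_1s,\DD_2s)=(\pi-1)x^{\pi}$, while $W'$ has first row $(0,s')=(0,1+\pi x^{\pi-1})$ and second row $(x^{\pi},\pi x^{\pi-1})$, giving $\det W'=-x^{\pi}-\pi x^{2\pi-1}$. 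Both have order exactly $\pi$, so the claimed strict valuation inequality $\ord\det W'>\ord\mathcal{W}$ fails; here $\det W=\pi x^{\pi}+\pi x^{2\pi-1}\neq 0$ only because the two leading coefficients happen not to cancel, and your proposal gives no argument ruling out such cancellation in general (the ``rows proportional at lowest order'' mechanism does not apply, since $\det W'$ picks up contributions from repeated exponents that $\mathcal{W}$ cannot see). So the crucial nonvanishing $\det W\neq 0$ is unproved precisely in the case that matters most for this corollary with the operator $\frac{d}{dx}$. The paper's route avoids the issue entirely: it never needs invertibility of $W$ over the series field, only that the lowest-order coefficient of $\DD_j(P(\bar s(x)))$ produces the relations~\eqref{eq:vandermonde1}, and the corollary then follows at once from Theorem~\ref{the:main1}.
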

For instance $x+x^{\pi}$ is not a solution of a non-trivial equation
$P=0$ of order one and constant coefficients,
that is, with $P(y_0,y_1)\in\mathbb{C}[y_0,y_1]$. We will improve this
result in the case of autonomous first order differential equations
(see Theorem~\ref{th:AutFirstOrderODEs}).

Our second main result deals with the case when $P$ is a polynomial in
$x$ also. Notice that in this case $\langle\supp P\rangle$ is either
  $\{0\}$ if $P$ has constant
coefficients or $\mathbb{Q}$ otherwise.

\begin{theorem}\label{the:main2}
  Take $P\in \mathbb{C}[x,y_{0},\dots,y_n]$
  with $P\neq 0$ and assume
  $s(x)\in \Omega$ is a solution of
  $P=0$ whose support has maximal rational rank, that
  is
  \begin{equation*}
      \dim_{\mathbb{Q}} \frac{\left\langle \supp s(x) \cup
          \supp P\right\rangle}
      {\langle\supp P\rangle}
      = n.
  \end{equation*}
  Then $s(x)$  converges uniformly in any sector $S$ of
  sufficiently small radius with vertex at the origin and of the
  opening less than $2\,\pi$.
\end{theorem}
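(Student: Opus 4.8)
The plan is to linearize $P$ along $s(x)$ and to prove that the linearized operator is regular singular at the origin; the theorem then follows from the convergence criteria of Gontsov--Goryuchkina \cite{gontsov-goryuchkina2015} and Gontsov--Goryuchkina--Lastra \cite{Gontsov2022579}, which extend Malgrange's criterion to solutions with complex exponents. After replacing $P$ by its irreducible factor that vanishes along $s(x)$, we may assume that the linearization $L(v)=\sum_{\kappa=0}^{n}\frac{\partial P}{\partial y_\kappa}(s(x))\,v^{(\kappa)}$ of $P$ along $s$ is not identically zero (if it still were, $s(x)$ would be a singular solution, a situation that reduces to an equation of lower order or degree).

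Using the hypothesis, I would first pick exponents $\mu_1,\dots,\mu_n\in\supp s(x)$ whose classes modulo $\langle\supp P\rangle$ form a $\mathbb{Q}$-basis of $\langle\supp s(x)\cup\supp P\rangle/\langle\supp P\rangle$; this is possible since that quotient has dimension $n$ and is spanned by the classes of elements of $\supp s(x)$. Put $\Gamma=\langle\supp P\rangle$, enlarged by $\mathbb{Z}$ when $'$ is the ordinary derivative so that $\mathbb{C}((x^{\Gamma'}))$ is stable under $'$, and let $\DD_1,\dots,\DD_n$ be the derivations of Lemma~\ref{le:derivations-x^mu}. Then $s,s',\dots,s^{(n)}\in\mathbb{C}((x^{\Gamma'}))$, and each $\DD_j$ commutes with $'$ on this ring: both act diagonally on the monomial basis, and the exponent shift $\shift\kappa$, when nonzero, lies in $\langle\Gamma\rangle$, so that $[\alpha-\shift\kappa]_{\mu_j}=[\alpha]_{\mu_j}$; hence $\DD_j(s^{(\kappa)})=(\DD_j s)^{(\kappa)}$ for every $\kappa$.

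Now the key step. Since $s(x)$ solves $P=0$ we have $P[s(x)]_0=P(s(x))=0$, hence $\DD_j(P[s(x)]_0)=0$; putting $\rho=0$ in \eqref{eq:2} this reads $\sum_{\kappa=0}^{n}\frac{\partial P}{\partial y_\kappa}(s(x))\,\DD_j(s^{(\kappa)}(x))=0$, that is $L(\DD_j s)=0$ for $j=1,\dots,n$, using $\DD_j(s^{(\kappa)})=(\DD_j s)^{(\kappa)}$. Thus $\DD_1 s,\dots,\DD_n s\in\mathbb{C}((x^{\Gamma'}))$ solve the linear equation $Ly=0$, and they are $\mathbb{C}$-linearly independent: $[\mu_k]_{\mu_j}=\delta_{jk}$ and the coefficient of $x^{\mu_k}$ in $s(x)$ is nonzero, so the coefficient of $x^{\mu_k}$ in $\sum_j\lambda_j\DD_j s$ is a nonzero multiple of $\lambda_k$. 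Since a linear differential or $q$-difference operator of order $\le n$ has a solution space of dimension $\le n$ over its field of constants (which is $\mathbb{C}$ in all three cases, using $|q|\neq1$ for $q$-difference equations), it follows that $L$ has order exactly $n$, that $\frac{\partial P}{\partial y_n}(s(x))\neq0$, and that $\{\DD_j s\}_{j=1}^{n}$ is a fundamental system of solutions of $Ly=0$. (In the one exceptional case where $P$ has constant coefficients, $'=d/dx$ and $1\in\langle\supp s(x)\rangle$, only $n-1$ such $\mu_j$ exist; but then $s'$ also solves $Ly=0$, because $L(s')=\frac{d}{dx}P(s(x))=0$, and $s'$ is independent of the $\DD_j s$ by comparing lowest exponents, which restores the count.)

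It remains to deduce that $0$ is a regular singular point of $L$. This should follow because an order-$n$ linear differential or $q$-difference operator with coefficients in $\mathbb{C}((x^{\Gamma'}))$ which admits a full fundamental system of solutions inside $\Omega$ -- hence with no logarithmic terms and with supports bounded below -- can have no irregular part: any positive slope of its Newton polygon would force formal solutions carrying a factor with support unbounded below, dropping the dimension of its $\Omega$-solution space below $n$. With $L$ regular singular at the origin and $P$ a polynomial, the criteria of \cite{gontsov-goryuchkina2015,Gontsov2022579} then give that $s(x)$ converges; and since the $\mu_i$ are real but in general irrational, so that the monomials $x^{\mu_i}$ are multivalued, the convergence is uniform precisely on sufficiently small sectors at the origin of opening less than $2\pi$, which is the assertion. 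The step I expect to be the main obstacle is this last one: making rigorous, for linear operators whose coefficients are generalized power series with real (possibly irrational) exponents, the principle that a full fundamental system inside $\Omega$ forces regular singularity, and checking that the resulting notion of regular singularity is exactly the one under which \cite{gontsov-goryuchkina2015,Gontsov2022579} conclude convergence; the uniform treatment of the three operator types, and the extra argument for the constant-coefficient ordinary-derivative case, are by comparison routine.
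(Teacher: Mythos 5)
Your first half is essentially sound and is in fact close to the paper's own mechanism: applying the derivations $\DD_j$ of Lemma~\ref{le:derivations-x^mu} to $P(s(x))=0$ via \eqref{eq:2} with $\rho=0$ does give $L(\DD_j s)=0$ (with the extra solution $s'$ in the constant-coefficient $\frac{d}{dx}$ case), and your independence argument and the conclusion $\frac{\partial P}{\partial y_n}(s(x))\neq 0$ are fine. The genuine gap is exactly the step you flag at the end: the implication ``$L$ has $n$ linearly independent solutions inside $\Omega$ (no logarithms, no exponential parts) $\Rightarrow$ $L$ is regular singular at $0$'' is asserted, not proved. Over $\mathbb{C}((x))$ this is classical Fuchs theory, but here the coefficients $\frac{\partial P}{\partial y_\kappa}(s(x))$ are generalized power series with arbitrary real exponents, and the structure theory you invoke (Newton polygon of a linear operator, exponential parts, the count of slope-zero solutions) is not available off the shelf in this setting, let alone uniformly for the three operator types; proving it would be a larger undertaking than the proof it is meant to replace. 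Moreover, the criteria of \cite{gontsov-goryuchkina2015,Gontsov2022579} are applied through the explicit order condition \eqref{eq:3}, and in the $q$-difference case with $|q|<1$ the required condition is minimality of $\ord\frac{\partial P}{\partial y_0}(s(x))$, not of the top coefficient; so your regular-singularity claim would have to be established in the strong form that the Newton polygon of $L$ has only the slope-zero side, a point your sketch leaves open. (A minor further looseness: passing to an irreducible factor of $P$ does not by itself force the linearization to be nonzero when $n\geq 1$; the paper's induction on the total degree, applied to a nonvanishing partial derivative, is the clean fix and is compatible with the rank hypothesis thanks to Theorem~\ref{the:main1}.)

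By contrast, the paper never needs a fundamental system: it applies the $\DD_j$ to a truncation $\bar{s}(x)$ and compares only the leading terms in \eqref{eq:2_0000}, obtaining the relations $\sum_{\kappa=0}^{n}d_\kappa\,\dmuk{\mu_j}{\kappa}=0$ for $j=1,\dots,n$ as in \eqref{eq:vandermonde1}, where $d_\kappa$ is the coefficient of $x^{\mina+\shift\kappa}$ in $\frac{\partial P}{\partial y_\kappa}(s(x))$; a Vandermonde argument then yields $d_n\neq 0$ (and $d_0\neq 0$ when $|q|<1$), which is precisely condition \eqref{eq:3}, i.e.\ the hypothesis of the cited convergence theorems. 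In other words, your identity $L(\DD_j s)=0$ is the exact form of the relation whose leading-order part the paper extracts, and the unproven regular-singularity theorem is circumvented there by elementary finite-dimensional linear algebra on those leading coefficients. To complete your route you would either have to supply the missing structural theorem for linear operators over $\mathbb{C}((x^{\Gamma'}))$, or simply truncate your argument at leading order --- at which point it becomes the paper's proof.
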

The following result covers Theorem 11 in~\cite{Cano2022137} and  is a
consequence of the previous Theorem for the case of first order equations
with constant coefficients:
\begin{corollary}\label{co:corTh2FirstOrderConstant}
  Let $P(y_0,y_1)$ a polynomial with constant coefficients. Let
  $s(x)\in \Omega$ be a
  solution of the equation $P=0$. Then $s(x)$ is convergent in the
  sense of Theorem~\ref{the:main2}.
  Moreover, if $s(x)$ is a Puiseux series then it is convergent in a
  neighbourhood of the origin.
\end{corollary}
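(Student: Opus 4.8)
The plan is to deduce the statement from Theorem~\ref{the:main1} and Theorem~\ref{the:main2}, after isolating the trivial case. First I would regard $P(y_0,y_1)$ as an element of $\mathbb{C}[x,y_0,y_1]$, so that $\supp P=\{0\}$ and hence $\langle\supp P\rangle=\{0\}$; the order of the equation is $n=1$. We may assume $P$ is non-constant in $(y_0,y_1)$, since a nonzero constant $P$ gives an equation with no solution and $P\equiv 0$ is excluded; in particular $P\neq 0$, so Theorem~\ref{the:main2} is applicable in principle.

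Next I would split according to the rational rank of $\supp s(x)$. If $\langle\supp s(x)\rangle=\{0\}$, then $s(x)$ is a constant, which converges uniformly on every sector, and there is nothing to prove. Otherwise $\dim_{\mathbb{Q}}\langle\supp s(x)\rangle\geq 1$, and applying Theorem~\ref{the:main1} with $n=1$ and $\supp P=\{0\}$ yields
\[
  1\;\le\;\dim_{\mathbb{Q}}\langle\supp s(x)\rangle
  \;=\;\dim_{\mathbb{Q}}\frac{\langle\supp s(x)\cup\supp P\rangle}{\langle\supp P\rangle}
  \;\le\;n\;=\;1 ,
\]
so that the support of $s(x)$ has maximal rational rank. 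Theorem~\ref{the:main2} then applies directly and gives the uniform convergence of $s(x)$ on every sufficiently small sector with vertex at the origin and opening less than $2\pi$, which is the first assertion.

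For the last statement, assume in addition that $s(x)$ is a Puiseux series, so $\supp s(x)\subseteq\frac1N\mathbb{Z}$ for some positive integer $N$, with only finitely many negative exponents. I would perform the substitution $x=t^{N}$: then $g(t)=s(t^{N})$ is a Laurent series in $t$ with finite principal part, and a sector $S$ in the $x$-plane on which $s(x)$ converges uniformly corresponds to a sector $S'$ in the $t$-plane of positive radius and positive opening. Choosing any $t_0\in S'$ with $t_0\neq 0$, the holomorphic part of $g$ converges at $t_0$, hence on the whole disc $|t|<|t_0|$; therefore $g(t)$ is holomorphic on the punctured disc $0<|t|<|t_0|$, and consequently $s(x)=g(x^{1/N})$ converges on a punctured neighbourhood of the origin.

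I do not expect any serious obstacle: the argument is essentially bookkeeping. The only points requiring a little care are peeling off the constant case (where the hypothesis of Theorem~\ref{the:main2} fails because the rank is $0$, not maximal) before invoking Theorem~\ref{the:main2}, checking that the bound in Theorem~\ref{the:main1} is attained in the remaining case, and, for the ``moreover'' part, the elementary fact that a power series converging at one nonzero point converges on the corresponding disc, applied after the Puiseux substitution $x=t^N$.
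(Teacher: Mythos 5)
Your proof of the main assertion is correct and is essentially the paper's own argument: peel off the constant case, use Theorem~\ref{the:main1} with $\langle\supp P\rangle=\{0\}$ and $n=1$ to force $\dim_{\mathbb{Q}}\langle\supp s(x)\rangle=1$, i.e.\ maximal rational rank, and then invoke Theorem~\ref{the:main2}. The only point where you diverge is the ``moreover'' clause: the paper gets convergence of Puiseux solutions in a neighbourhood of the origin directly from the proof of Theorem~\ref{the:main2}, where for Puiseux series the regular-singularity condition~\eqref{eq:3} is fed into Malgrange's theorem, whereas you deduce it a posteriori from the sectorial convergence statement via the substitution $x=t^{N}$ and Abel's lemma (convergence at one nonzero point of the sector gives absolute convergence on the corresponding disc). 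Both routes are valid; yours is self-contained and elementary given the statement of Theorem~\ref{the:main2}, while the paper's is immediate from inside its proof.
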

\begin{proof}
  Either $s(x)$ is a constant or $\dim_\mathbb{Q}\langle \supp
  s(x)\rangle \geq 1$. Since $\langle \supp P\rangle =\{0\}$, by
  Theorem~\ref{the:main1},
  $\dim_\mathbb{Q}\langle \supp s(x)\rangle = 1$ and the result
  follows from Theorem~\ref{the:main2}.
\end{proof}
We divide the proofs of Theorems~\ref{the:main1} and~\ref{the:main2}
into a shared initial part and their respective ends.

\noindent\emph{Common part of the proof of Theorems \ref{the:main1} and \ref{the:main2}}.
Let $s(x)=\sum_{i=1}^{\infty} a_i\,x^{\alpha_i}\in \Omega$ be a solution
of the non-trivial equation $P=0$. We may
assume without loss of generality that for some $0\leq \kappa\leq n$,
$\frac{\partial P}{\partial y_\kappa}(s(x))\neq 0$: otherwise we proceed by induction on
the total degree of $P$, as $\frac{\partial P}{\partial y_{\kappa}}$ and $s(x)$ would
satisfy the hypotheses of both results. Thus, we can define:
\begin{displaymath}
  \mina=\min\{\ord_{x}
  \frac{\partial P}{\partial y_\kappa}(s(x))-\shift\,\kappa
  \mid \, 0\leq \kappa\leq
  n\},
  \,\,
  \Lambda=
  \min\{\ord_{x}
  \frac{\partial^{\rho} P}{\partial y^{\rho}}(s(x))\mid \, \rho\in
  \mathbb{Z}_{\geq 0}^{n+1}\},
\end{displaymath}
This $\lambda$ corresponds to
the one defined in~\cite{Gontsov2022579} and $\Lambda$ is just an
auxiliary constant to be used in the forthcoming proof.

Consider 
\begin{displaymath}
  m_0=
\dim_{\mathbb{Q}}\frac{\langle \supp s(x) \cup \supp P \rangle}
{\langle\supp P \rangle}
\in \mathbb{Z}_{\geq 0}\cup \{\infty\},
\end{displaymath}
which in theory could be infinity. Part of our argument will consist
in proving that it cannot be.
In the proof of Theorem~\ref{the:main1} we shall assume that $m_{0}\geq
n+1$ and argue by contradiction, whereas in the proof of
Theorem~\ref{the:main2},
we shall assume that
$m_{0}=n$, as this is
equivalent to the hypothesis.

Let $\Gamma_0=\angles{\supp P}$. For $1\leq j\leq m_0$ (if
$m_0=\infty$ the last inequality is strict),
define $\mu_j$ and $\Gamma_j$ inductively as
follows: Assume that $\Gamma_0,\Gamma_1, \ldots, \Gamma_{j-1}$ and,
$\mu_1,\mu_2,\ldots,\mu_{j-1}$ have been defined (so that for $j=1$
only $\Gamma_0$ exists). By the definition of $m_0$, there exists the
minimum $k$ of the set of
indexes $i$ such that $a_i\neq 0$ and $\alpha_i\not\in \Gamma_{j-1}$.
We set $\mu_j=\alpha_k$ and $\Gamma_{j}=\angles{\Gamma_{j-1}\cup\{\mu_j\}}$.

\newcommand{\bars}{\bar{s}}

As $P(s(x))=0$, we can choose $N$ large enough such that the truncation
$\bars(x)=\sum_{i=1}^{N}a_i\,x^{\alpha_i}$
of $s(x)$ satisfies the following property:
\begin{equation}
  \label{eq:T1}
   \ord (s(x)-\bars(x))>|\mina|+2\,n + |\Lambda|,
\end{equation}
which is possible because
either $s(x)$ has a finite number of terms or its exponents tend to
infinity. 
Notice that $\bars(x)$ could be equal to $s(x)$ if the latter has a
finite number of terms.

Once $N$ is set, there exists a finite $m$ such that 
we can rewrite $\bars(x)$ as follows:
  \begin{equation}\label{eq:r(x)}
    \bars(x) =
    \sum_{j=0}^{m}\big(
  c_{j,0}\,x^{\mu_{j,0}} + c_{j,1}\,x^{\mu_{j,1}} + \cdots + c_{j,t_j}\,x^{\mu_{j,t_j}}
  \big),
\end{equation}
where the following properties hold:
\begin{enumerate}[(i)]
\item The sequence of exponents $(\mu_{j,i})$ is increasing with respect the
  lexicographical order of their indices $(j,i)$.

\item The exponents
  $\mu_{j,i}\in \Gamma_j$, for $i=0,\ldots,t_j$ and $j=0,\ldots,m$.
  
\item All the coefficients $c_{j,i}$ are non-zero for $j=1,\ldots,m$ and
  $0\leq i\leq t_{j}$.
  
\item The exponent $\mu_{j,0}=\mu_j\not\in \Gamma_{j-1}$
  for $j=1,..,m$, that is, it adds one to
  the rational rank.
  
\end{enumerate}
By properties
(i)--(iv), the set $\Gamma=\Gamma_0$, the exponents $\mu_1,\ldots,\mu_{m}$
and $\Gamma_m$ satisfy the hypothesis of
Lemma~\ref{le:derivations-x^mu}.
In order to make $\mathbb{C}((x^{\Gamma_m}))$ a differential
field with the operator $\frac{d\,}{d\,x}$ we do the following: If
$1\not\in \Gamma_m$, we set $\Gamma'=\langle\Gamma_m\cup\{1\}\rangle$
otherwise we set $\Gamma'=\Gamma_m$.
Hence $\Gamma$ and $\mu_1,\ldots,\mu_m$, and eventually $1$,
satisty hyphotesis of Lemma~\ref{le:derivations-x^mu}
and $\mathbb{C}((x^{\Gamma'}))$ is closed under the three
operators that we use.
Any coefficient of $P[\bars(x)]$
belongs also to $\mathbb{C}((x^{\Gamma'}))$, and we may apply the derivations
$\DD_j$, $j=1,\ldots,m$, to these coefficients
(if $1\not\in \Gamma_m$, we will not use the operator
$\mathcal{D}_{m+1}$ corresponding to $1$).

Noticing that $P[\bars(x)]_{e_{k}}=\frac{\partial P}{\partial y_{\kappa}}(\bars(x))$,
Equation~\eqref{eq:2} applied to $\rho=(0,0,\ldots,0)$ gives:
\begin{equation}
  \label{eq:2_0000}
   \DD_j(P(\bars(x)))=
   \sum_{\kappa=0}^{n}
   \frac{\partial P}{\partial y_\kappa}(\bars(x))
  \,\DD_j (\bars^{(\kappa)}(x))
\end{equation}
From now on, the notation $t(x)=ax^{\mu} + \cdots $ will mean that $a\in \mathbb{C}$ ($a$
may be $0$), and $\ord_x(t(x)-a\,x^{\mu})>\mu$. By definition of $\mina$, we can write, for
$\kappa\in 0,\ldots, n$:
\[
  \frac{\partial P}{\partial
    y_\kappa}(s(x))=d_\kappa\,x^{\mina+\shift \kappa}+\cdots
\]
and we know that there is at least one $\kappa\in \{0,\ldots, n\}$ with
$d_{\kappa}\neq 0$. Using the Taylor expansion of the left hand side
of this equality and property~\eqref{eq:T1},
we also obtain
\begin{displaymath}
    \frac{\partial P}{\partial
    y_\kappa}(\bars(x))=d_\kappa\,x^{\mina+\shift\,\kappa}+\cdots.
\end{displaymath}
From the properties of the derivations
$\DD_j$, and because $[\mu_{j,0}]_{\mu_j}=1$ and
$[\mu_{j',i}]_{\mu_j}=0$ for every $(j',i)<(j,0)$, for $j=1, \ldots,m$, we have:
\begin{displaymath}
 \DD_j
 (\bars^{(\kappa)}(x))=c_{j,0}\,\dmuk{\mu_j}{\kappa}\,x^{\mu_j-\shift \kappa}+\cdots.
\end{displaymath} 
Hence, the right  hand side of Equation~\eqref{eq:2_0000} is
\begin{equation}\label{eq:proof_theorem1y2}
  \sum_{\kappa=0}^{n}
  (d_\kappa\,x^{\mina+\shift\,\kappa}+\cdots)
  (c_{j,0}\,\dmuk{\mu_j}{\kappa}\,x^{\mu_j-\shift\,\kappa}+\cdots)=
  c_{j,0}\left(\sum_{\kappa=0}^{n}d_\kappa\,\dmuk{\mu_j}{\kappa}\right)x^{\mina+\mu_j}+\cdots.
\end{equation}
This finishes the common arguments.

\begin{proof}[End of proof of Theorem \ref{the:main1}]
  In order to obtain a contradiction, we assume that
  $m_0>n$ so that $\mu_{n+1}$ is defined.
  This allows us to choose $N$
  large enough such that
  $m\geq n+1$ and 
  $\ord_x P(\bars(x)) > \mina+\mu_{n+1}$. Now $\mu_{n+1,0}=\mu_{n+1}$,
  and all this properties imply that the left hand side of
equation~\eqref{eq:2_0000} has order greater than
$\mina+\mu_{n+1,0}$.
Since $c_{j,0}\neq 0$, using~\eqref{eq:proof_theorem1y2}
we obtain the $n+1$ equalities:
\begin{equation}\label{eq:vandermonde1}
  \sum_{\kappa=0}^{n}d_\kappa\,\dmuk{\mu_j}{\kappa}=0,\quad j=1,2,\ldots,n+1,
\end{equation}
which is a square linear system with coefficient matrix
$(\dmuk{\mu_j}{\kappa})$. Because $|q|\neq 1$ and $\mu_j$ is an
increasing sequence of real numbers, then
$q^{\mu_j}\neq q^{\mu_i}$ for $i\neq j$. Thus the coefficient matrix is a 
Vandermonde
matrix in
the case of $q$-difference equations
and in the case of differential equations with the Euler operator.
In the case of differential
equations with the ordinary differential operator, the matrix
$(\dmuk{\mu_j}{\kappa})$ is reduced by elementary column operations to the
Vandermonde matrix $(\mu_{j}^{\kappa})$. Hence,
system~\eqref{eq:vandermonde1} has
the unique solution $d_0=d_1=\cdots=d_n=0$, contradicting the fact that at
least one $d_{\kappa}$ is non-zero. Thus, $m_{0}\leq n$ which gives Theorem~\ref{the:main1}.
\end{proof}

\begin{proof}[End of proof of Theorem \ref{the:main2}]
  As $m_0=n$ in this case, $\mu_n$ is defined and we can choose $N$ such that 
    $\ord_x P(\bars(x)) >   \mina+\mu_{n}$, and $m=n$. Thus
    $\mu_{n,0}=\mu_n$ is also defined. The left hand side of equation~\eqref{eq:2_0000} has order
  greater than $\mina+\mu_{n,0}$ and we obtain the system of
  equations~\eqref{eq:vandermonde1} for $j=1,2,\ldots,n$.  If $d_n=0$, this system of
  equations becomes another Vandermonde system for $d_0,d_1,\ldots,d_{n-1}$, so that
  $d_0=d_1=\cdots=d_{n-1}=d_n=0$, again contradicting the existence of
  one non-zero~$d_k$.
  As a consequence, $d_n\neq 0$, so that
  $\ord \frac{\partial P}{\partial y_n}(s(x))=
  \mina+\shift \,n$,
  and we get:%
\begin{equation}
  \label{eq:3}
  \ord \frac{\partial P}{\partial
    y_n}(s(x))-\shift\,n
  =\mina\leq
  \ord \frac{\partial P}{\partial
    y_\kappa}(s(x))-\shift\,\kappa
  ,\quad \text{for }\kappa=0,1,\ldots,n.
\end{equation}%
This means that the linearized differential
operator $\sum_{\kappa=0}^{n}\frac{\partial P}{\partial
  y_\kappa}(s(x))\, y_\kappa$ along $s(x)$ has a regular
singularity at $x=0$. If $s(x)$ is a Puisuex formal power series,
we apply the main result in~\cite{Malgrange:1989} to guarantee that
$s(x)$ is convergent in both differential cases.
If $s(x)\in \Omega$, Condition~\eqref{eq:3}
is  the hypothesis of Theorem 1 of
\cite{gontsov-goryuchkina2015}, 
which proves the Euler differential case.
To prove the ordinary differential case we only need to rewrite $P$
in terms of the Euler operator. Condition~\eqref{eq:3} becomes then the
hypothesis  of Theorem 1 of~\cite{gontsov-goryuchkina2015} again.

For the case of $q$-difference equations we need to distinguish the
cases $|q|>1$ and $|q|<1$, as in~\cite{Gontsov2022579}. If $|q|>1$,
the fact that $d_n\neq 0$ and that $\mu_j$ is an increasing sequence
of positive real numbers let us apply Theorem 1
of~\cite{Gontsov2022579} straightforwardly. If $|q|<1$, in order to
apply the same Theorem we need to show that $d_0\neq 0$. As in the
previous argument, if $d_0=0$, then~\eqref{eq:vandermonde1}, for
$j=1,\ldots,n$ becomes a Vandermonde system for $d_1,\ldots,d_n$,
whose only solution is $d_1=\cdots=d_n=0$, getting the same
contradiction. Thus $d_0\neq 0$ and we can apply
Theorem 1 of~\cite{Gontsov2022579} again to obtain the convergence. 
\end{proof}


\section{The Newton-Puiseux Polygon}\label{sec:NewtonPolygon}
In this section we give a short description of the well known method of the Newton polygon
(or Newton-Puiseux) applied to polynomial differential and 
$q$\nobreakdash-difference equations, which we shall use to
prove Theorem~\ref{the:main3}.

Let us introduce some notation:
For $\rho=(\rho_0,\ldots, \rho_n)\in \mathbb{Z}_{\geq 0}^{n+1}$ we
denote $|\rho|=\rho_0+\rho_1+\cdots+\rho_n$ and
$\omega(\rho)=\rho_1+2\,\rho_2+\cdots+n\,\rho_n$.
As before, we set $\shift=0$ in the cases of differential equations
with respect the Euler derivative or $q$-difference equations, and
$\shift=1$ in the case of differential equations with respect the
derivative~$\frac{d\,}{d\,x}$.

Fix a non-zero polynomial $P\in \Omega[y_0,\ldots,y_n]$, and write it uniquely as
\begin{equation*}
  P = \sum_{\rho \in \mathbb{Z}_{\geq 0}^{n+1}}
  \sum_{\alpha \in \supp (P)-\shift\,\omega(\rho)} P_{(\alpha,\rho)}
  \, x^{\alpha+\shift\,\omega(\rho)}y_0^{\rho_0}\cdots y_n^{\rho_n}
\end{equation*}
with $P_{(\alpha,\rho)}\in \mathbb{C}$. 
Given
$V=(\alpha, r)\in \mathbb{R}\times \mathbb{Z}_{\geq 0}$,
$P_V$ will denote 
the sum of all the terms of $P$ corresponding to the point
$V$, that is
\begin{equation*}
  P_V =P_{(\alpha,r)}=
  \sum_{(\alpha,|\rho|)=V}
  P_{(\alpha,\rho)}x^{\alpha+\shift\,\omega(\rho)}y_0^{\rho_0}\cdots y_{n}^{\rho_n}.
\end{equation*}
There is no confusion possible between $P_{(\alpha,r)}$ and
$P_{(\alpha,\rho)}$ except when $n=0$, in which case we shall abuse
the notation as the context will clarify what value we are using.
The \emph{cloud of points} $\mathcal{C}(P)$ of $P$ is the set of points in the plane:
\begin{equation*}
  \mathcal{C}(P)
  = \left\{ (\alpha, r)\in \mathbb{R}\times \mathbb{Z}_{\geq 0} : P_{(\alpha,r)}\neq 0 \right\}.
\end{equation*}
We are interested in generalized power series with exponents
in 
increasing order.
In this
setting,
the \emph{Newton-Puiseux polygon} (Newton polygon for short) of $P$, denoted
$\mathcal{N}(P)$, is the convex envelope in $\mathbb{R}^{2}$ of the set obtained by
adjoining
the half-line $\mathbb{R}_{\geq 0}\times\{0\}$
to each point in $\mathcal{C}(P)$:
\begin{equation*}
  \mathcal{N}(P) = \mathrm{conv.env.} \left\{ (\alpha,r) +
    (\mathbb{R}_{\geq 0}\times\{0\}) : (\alpha, r)\in \mathcal{C}(P)
  \right\}.
\end{equation*}
Its border is composed of a sequence of points and segments.
Given a positive number $\mu\in \mathbb{R}$, the
\emph{supporting line} of co-slope $\mu$, is the unique line
with equation $L_{\mu}(P)\equiv \mu\,r+\alpha=\alpha_0$  with
$\alpha_0$ minimum and $L_{\mu}(P)\cap \mathcal{N}(P)\neq\emptyset$.
The \emph{element of co-slope $\mu$ of
  $\mathcal{N}(P)$}, $E_{\mu}(P)$ is that
intersection $L_{\mu}(P)\cap \mathcal{N}(P)$,
which can be either a segment (called a \emph{side} of
$\mathcal{N}(P)$) or a point
(a \emph{vertex}). In
both cases we denote by $\Top(E_{\mu}(P))$ and $\Bot(E_{\mu}(P))$ the highest and lowest
points of $E_{\mu}(P)$, respectively (if $E$ is a vertex, they
coincide).

Given $V=(\alpha,r)\in \mathbb{R}\times \mathbb{Z}_{\geq 0}$,
the \emph{indicial polynomial} of $P$ at $V$ is
\begin{displaymath}
  \Psi_{(P;V)}(T)=\sum_{(\alpha,|\rho|)=V}
  P_{(\alpha,\rho)}\,T^{\langle\rho\rangle}\in \mathbb{C}[T],
\end{displaymath}
where $T^{\langle \rho\rangle}$ is equal to $T^{\omega(\rho)}$
for the Euler differential operator or the $q$-difference
operator, and
$T^{\langle \rho\rangle}=\prod_{\kappa=1}^{n}
\big(T\,(T-1)\cdots(T-\kappa+1)\big)^{\rho_\kappa}$ for
the differential operator~$\frac{d\,}{d\,x}$.

The
\emph{characteristic polynomial} of $P$ with respect to a 
co-slope $\mu\in \mathbb{R}_{\geq 0}$ is
\begin{displaymath}
\Phi_{(P;\mu)}(C)=\sum_{(\alpha,|\rho|)\in
  E_{\mu}(P)}\dmu{\mu}^{\langle\rho\rangle}\,P_{(\alpha,\rho)}\,C^{|\rho|}
=
\sum_{V\in E_{\mu}(P)}  \Psi_{(P;V)}(\dmu{\mu} )\,C^{\height(V)}
\in \mathbb{C}[C],
\end{displaymath}
where $\height(V)$ is the ordinate of $V$. 
The key Lemma of the Newton polygon process gives
the following necessary condition, which shows the importance of the characteristic
polynomial (see \cite{Cano-Fortuny-Asterisque,Cano-Fortuny-q-diff-2022} for a short proof):
\begin{lemma}\label{le:key_lemma_Newton_Polygon}
  Let $s(x)=c\,x^{\mu}+\sum_{\alpha>\mu}c_{\alpha}\,x^{\alpha}\in
  \Omega$ be a solution of $P=0$. Then
  \[\Phi_{(P;\mu)}(c)=0.\]
\end{lemma}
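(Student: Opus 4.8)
The plan is to substitute the series $s(x)=c\,x^{\mu}+\cdots$ into $P$ and read off the coefficient of the smallest possible power of $x$. First I would record the expansions of the transforms of $s(x)$: from the formulas for $\dmuk{\mu}{\kappa}$ in Section~\ref{sec:NotationStatements}, and since $\mu$ is the least exponent of $s(x)$, one has
\[
  s^{(\kappa)}(x) = c\,\dmuk{\mu}{\kappa}\,x^{\mu - \shift\kappa} + \cdots, \qquad \kappa = 0,1,\dots,n,
\]
in the notation $t(x) = a\,x^{\nu} + \cdots$ of Section~\ref{sec:twotheorems} (so the displayed coefficient is allowed to vanish, which happens only for the operator $\frac{d}{dx}$ when $\mu\in\{0,\dots,\kappa-1\}$; this causes no trouble below). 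Taking the product over $\kappa$ of the $\rho_\kappa$-th powers, the monomial $y_0^{\rho_0}\cdots y_n^{\rho_n}$ evaluated at $(s(x),\dots,s^{(n)}(x))$ equals $c^{|\rho|}\big(\prod_{\kappa}(\dmuk{\mu}{\kappa})^{\rho_\kappa}\big)\,x^{\mu|\rho| - \shift\omega(\rho)} + \cdots$, and checking the three operators separately shows $\prod_{\kappa}(\dmuk{\mu}{\kappa})^{\rho_\kappa} = \dmu{\mu}^{\langle\rho\rangle}$, i.e.\ $T^{\langle\rho\rangle}$ specialized at $T = \dmu{\mu}$ --- exactly the quantity used to build $\Psi_{(P;V)}$ and $\Phi_{(P;\mu)}$.

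Next, inserting this into the expansion $P = \sum_{(\alpha,\rho)} P_{(\alpha,\rho)}\,x^{\alpha + \shift\omega(\rho)}\,y_0^{\rho_0}\cdots y_n^{\rho_n}$, the factors $x^{\shift\omega(\rho)}$ cancel and I get
\[
  P(s(x)) = \sum_{(\alpha,\rho)} P_{(\alpha,\rho)}\,c^{|\rho|}\,\dmu{\mu}^{\langle\rho\rangle}\,x^{\alpha + \mu|\rho|} + (\text{higher-order remainder in each summand}),
\]
so every monomial occurring in $P(s(x))$ has exponent at least $\alpha_0 := \min\{\,\alpha + \mu|\rho| : P_{(\alpha,\rho)}\neq 0\,\}$. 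Since $(\alpha,r)\mapsto \alpha + \mu\,r$ is affine, is nondecreasing along the half-lines $\mathbb{R}_{\geq 0}\times\{0\}$ adjoined in the definition of $\mathcal{N}(P)$, and $r$ ranges over a bounded set on $\mathcal{N}(P)$, this minimum is attained on $\mathcal{C}(P)$ and equals the constant $\alpha_0$ defining the supporting line $L_{\mu}(P)$; moreover the pairs $(\alpha,\rho)$ realizing it are precisely those with $(\alpha,|\rho|)\in E_{\mu}(P)$. Therefore the coefficient of $x^{\alpha_0}$ in $P(s(x))$ is
\[
  \sum_{(\alpha,|\rho|)\in E_{\mu}(P)} \dmu{\mu}^{\langle\rho\rangle}\,P_{(\alpha,\rho)}\,c^{|\rho|} = \Phi_{(P;\mu)}(c),
\]
and as $P(s(x)) = 0$ this coefficient vanishes, giving $\Phi_{(P;\mu)}(c) = 0$.

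I expect the only delicate part to be the bookkeeping in the last paragraph: making sure the ``higher-order remainders'' cannot produce a monomial of exponent $\le \alpha_0$ beyond what is already accounted for, and that the minimum $\alpha_0$ is genuinely realized on the cloud $\mathcal{C}(P)$ rather than at an interior point of $\mathcal{N}(P)$ or out along an adjoined half-line. Both follow from convexity of $\mathcal{N}(P)$ together with monotonicity of the affine functional $(\alpha,r)\mapsto\alpha+\mu r$; there is no deeper ingredient, so this step is really just careful accounting.
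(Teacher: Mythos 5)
Your argument is correct: substituting $s(x)$ into the expansion of $P$, checking that $\prod_\kappa(\dmuk{\mu}{\kappa})^{\rho_\kappa}=\dmu{\mu}^{\langle\rho\rangle}$ in all three operator cases, and isolating the coefficient of the minimal exponent $\alpha_0$ given by the supporting line $L_\mu(P)$ is exactly the standard proof of this key lemma; the convexity/bookkeeping points you flag (the minimum being attained on $\mathcal{C}(P)$ since $r$ is bounded and the supports of the finitely many coefficients $P_\rho$ tend to infinity, and the remainders only producing exponents strictly above $\alpha_0$) are indeed the only delicate steps, and your handling of the possibly vanishing leading coefficients $\dmuk{\mu}{\kappa}$ for the operator $\frac{d}{dx}$ is fine. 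Note that the paper itself does not prove this lemma but refers to the cited works for a short proof, and your argument coincides with that standard route, so there is nothing to reconcile beyond the implicit assumption $c\neq 0$ for the leading coefficient.
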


In fact, this lemma translates into a sequence of necessary conditions for a power series to be a solution of $P=0$. Given $s(x)=\sum_{i=0}^{\infty}c_i\,x^{\nu_i}\in \Omega$, with
$\nu_i<\nu_{i+1}$ for all $i\geq 0$, denote, for the sake of simplicity:
\begin{displaymath}
  P_0=P,\quad  \text{and}\quad
  P_{i+1}=P_i[c_i\,x^{\nu_i}]=P[c_0x^{\nu_0}+\cdots+c_i\,x^{\nu_i}],\quad i=1,2,\ldots.
\end{displaymath}
If $s(x)$ is a solution of $P=0$, then for each $i\in \mathbb{Z}_{\geq 0}$, the series
$\sum_{l=i}^{\infty} c_l\,x^{\nu_l}$ must be a solution of
$P_i=0$. Thus, we obtain the sequence of necessary initial conditions on the coefficients of $s(x)$ when it is a solution of $P=0$:
\begin{equation}
  \label{eq:4}
  \Phi_{(P_i;\nu_i)}(c_i)=0,\quad i=0,1,\ldots.
\end{equation}
We need, however, to check whether a finite power series is effectively the truncation of
a solution. To this end, we introduce the following concepts: a finite sum
$r(x) = c_0x^{\nu_0} + \cdots +c_kx^{\nu_k}$ with $c_i\in \mathbb{C}^{\ast}$
(i.e. non-zero) and $\nu_i<\nu_{i+1}$, with $\nu_i\in \mathbb{R}$ will be called a
\emph{generalized polynomial}.
Such a generalized polynomial $r(x)$ is \emph{admissible
  for $P$, or for $P=0$} (or simply \emph{admissible}) if the necessary initial
conditions~\eqref{eq:4} are fulfilled for $i=0,1,\ldots,k$. In particular, any truncation
$r(x)$ of a solution $s(x)\in \Omega$ is an admissible generalized polynomial, but the
converse is not true, even for linear  equations: the generalized polynomial
$r(x)=x$ is admissible for the differential equation
$P=2\,y_0-y_1-x+x^2$
(in terms of the Euler differential operator)
but there is no solution $s(x)\in \Omega$
of $P=0$ having $r(x)=x$ as a truncation.
Notice that the same holds
considering $P=0$ as a $q$-difference equation with $q=\sqrt{2}$ and
$r(x)=(1+\sqrt{2}/2)\,x$.

However, the converse statement holds when  $\supp r(x)$ has maximum
rational rank. This is our third main result:

\begin{theorem}\label{the:main3}
  Assume that the equation $P=0$ of order $n$
  admits a generalized admissible polynomial
  $r(x)$ with
  \begin{equation}\label{eq:Th3}
    \dim_{\mathbb{Q}}
    \frac{\left\langle \supp r(x)\cup \supp P
      \right\rangle}
{\left\langle \supp P \right\rangle }
    \geq
    n.
  \end{equation}
  Then $r(x)$ is the truncation of a solution $s(x)$ of  $P=0$,
  and the above inequality is actually an equality.
\end{theorem}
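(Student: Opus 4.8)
The plan is to run the Newton polygon process starting from the admissible generalized polynomial $r(x) = c_0 x^{\nu_0} + \cdots + c_k x^{\nu_k}$ and show that it never stops, producing an infinite series $s(x)$ with $r(x)$ as a truncation, and then invoke Theorem~\ref{the:main1} to force the equality in~\eqref{eq:Th3}. Concretely, set $P_{k+1} = P[r(x)]$ as in the excerpt; since $r(x)$ is admissible, we must continue by choosing an exponent $\nu_{k+1} > \nu_k$ and a nonzero root $c_{k+1}$ of $\Phi_{(P_{k+1};\nu_{k+1})}$. The crucial claim is that as long as $P_{k+1} \neq 0$ (equivalently $r(x)$ is not already a solution, in which case we are trivially done), there always \emph{exists} such a choice, i.e. the Newton polygon $\mathcal{N}(P_{k+1})$ has a side or vertex with positive co-slope whose characteristic polynomial has a nonzero root. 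The only way the process can fail is if, after substitution, the only available geometry forces $c_{k+1} = 0$ for every admissible co-slope; I would argue this cannot happen under hypothesis~\eqref{eq:Th3}, essentially because hypothesis~\eqref{eq:Th3} guarantees we have already used up (at least) $n$ rationally independent directions modulo $\langle \supp P\rangle$, and the common computation in the proof of Theorems~\ref{the:main1} and~\ref{the:main2} (the Vandermonde argument on the $\delta_\mu^{(\kappa)}$) shows that past that point the characteristic polynomial cannot degenerate to a monomial in $C$ — so it has a nonzero root. This is where I expect the main obstacle to lie: making precise that, after $n$ rank-increasing steps, the linearization $\sum_\kappa \frac{\partial P}{\partial y_\kappa}(r(x))\, y_\kappa$ has a well-controlled order in each of its $n+1$ pieces (exactly the content of the auxiliary constants $\lambda$, $\Lambda$ and inequality~\eqref{eq:T1} in the common part), so that the next characteristic polynomial is genuinely nonconstant in $C$.

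More carefully, I would proceed as follows. First, replace $r(x)$ by a longer truncation if necessary so that its support already contains $n$ exponents $\mu_1, \ldots, \mu_n$ whose classes modulo $\langle \supp P\rangle$ are $\mathbb{Q}$-linearly independent — this is exactly hypothesis~\eqref{eq:Th3} with equality, and Theorem~\ref{the:main1} (applied to any eventual solution) tells us $n$ is the maximum, so~\eqref{eq:Th3} is forced to be an equality the moment a solution exists; this handles the last sentence of the statement for free. Second, observe that from this point on, every new exponent $\nu_{k+j}$ produced by the process must lie in $\Gamma = \langle \supp P \cup \{\mu_1, \ldots, \mu_n\}\rangle$: if some $\nu_{k+j} \notin \Gamma$ were forced, we could build the derivation $\mathcal{D}_{n+1}$ of Lemma~\ref{le:derivations-x^mu} and repeat the Vandermonde argument from the end of the proof of Theorem~\ref{the:main1} to get $d_0 = \cdots = d_n = 0$, contradicting $P_{k+1} \neq 0$ (more precisely contradicting that $\lambda$ is attained). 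So the process is confined to the fixed group $\Gamma$. Third — and this is the heart — within $\mathbb{C}((x^\Gamma))$ the classical Newton polygon argument for an equation over a field of series with well-ordered (here, at least: exponents tending to $+\infty$) support goes through: $\mathcal{N}(P_{k+1})$ is nonempty, has an edge of some positive co-slope $\mu$, and $\Phi_{(P_{k+1};\mu)}(C)$ is a nonzero polynomial; I must show it is not a monomial $a C^h$ with $h \geq 1$, for then its nonzero roots give a legal continuation. Non-monomiality is exactly where the order estimates from the common part of the proof of Theorems~\ref{the:main1}--\ref{the:main2} re-enter: the side $E_\mu(P_{k+1})$ containing $\Bot$ at height $0$ has $\Psi_{(P_{k+1};\Bot)}(\delta_\mu) \neq 0$ because that term is the relevant coefficient of $P_{k+1}$ itself (nonzero by assumption), while the presence of a term at height $\geq 1$ on that side is guaranteed by $\frac{\partial P_{k+1}}{\partial y_\kappa} \neq 0$ for some $\kappa$ together with the order bookkeeping; hence $\Phi$ has both a nonzero constant term and a term of positive degree, so a nonzero root exists.

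The step I expect to fight hardest with is the third one: controlling the Newton polygon of the \emph{iterated} substitution $P_{k+1} = P[r(x)]$ rather than of $P$ itself. Substitution can move points of the cloud around and, a priori, could cancel the constant term of the characteristic polynomial or flatten the polygon; ruling this out is precisely what the technical gymnastics with $\lambda$, $\Lambda$ and~\eqref{eq:T1} were designed for, so I would lift those estimates verbatim into this setting (they were proved for arbitrary truncations $\bar s$, not just for solutions). Once non-degeneracy of $\Phi_{(P_{k+1};\mu)}$ is secured at every stage, the process produces an infinite increasing sequence of exponents; that they tend to $+\infty$ — so that the result is a genuine element of $\Omega$ — follows because they all lie in $\Gamma$, a subgroup of $\mathbb{R}$ of finite rational rank $n$, on which the co-slopes chosen by the polygon are bounded below by a positive constant depending only on $\mathcal{N}(P_{k+1})$, and one checks (again via the polygon, exactly as in the algebraic Newton--Puiseux theorem) that the co-slopes are nondecreasing and cannot accumulate. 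Assembling these, $s(x) = \sum_{i\geq 0} c_i x^{\nu_i} \in \Omega$ is a solution of $P=0$ with truncation $r(x)$, and Theorem~\ref{the:main1} forces~\eqref{eq:Th3} to be an equality. \qed
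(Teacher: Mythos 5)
Your overall skeleton (iterate the Newton polygon process from $r(x)$, show each step can be performed, then invoke Theorem~\ref{the:main1} to turn \eqref{eq:Th3} into an equality) is the paper's, but the crucial step is not correctly established. To continue past co-slope $\nu>\max\supp r(x)$ you need the characteristic polynomial $\Phi_{(Q;\nu)}(C)$, $Q=P[r(x)]$, to have a nonzero root, and your concrete mechanism is: the bottom vertex at height $0$ gives a nonzero constant term, and ``the presence of a term at height $\geq 1$ on that side'' gives a nonzero coefficient of positive degree. That last inference is false: a cloud point $V=(\alpha,h)$ with $h\geq 1$ lying on the side contributes the coefficient $\Psi_{(Q;V)}(\delta_{\nu})$, i.e.\ the indicial polynomial \emph{evaluated} at $\delta_{\nu}$, which can vanish even though $Q_V\neq 0$. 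This is exactly the phenomenon in the paper's own example $P=2y_0-y_1-x+x^2$ (Euler operator) with the admissible polynomial $r(x)=x$: after substitution the only candidate side joins $(0,1)$ to $(2,0)$, the constant term is nonzero, a cloud point of height $1$ sits on the side and $\partial P_1/\partial y_0\neq 0$, yet $\Psi_{(P_1;(0,1))}(\delta_2)=2-2=0$, so $\Phi\equiv 1$ and no continuation exists. Your sketch never actually brings the rank hypothesis to bear at this point (the appeal to ``the Vandermonde argument'' is not carried out), so as written it would also ``prove'' extendability in that counterexample. What is needed --- and what the paper devotes Lemma~\ref{le:sum-zero}, Lemma~\ref{le:multiple-root} and Proposition~\ref{prop:main} to --- is that under \eqref{eq:Th3} the indicial polynomial at \emph{every} cloud point $V=(\alpha,r)$ of $E_{\nu}(Q)$ equals $c\,\bigl(\prod_{j=1}^{n}(T-\delta_{\mu_j})\bigr)^{r}$ with $c\neq 0$; this is obtained by applying the derivations $\DD_j$ of Lemma~\ref{le:derivations-x^mu} to the coefficients $Q_{\rho}$ (yielding the relations \eqref{eq:main}) and an induction on the height that passes to $\partial P/\partial y_n$. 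Only then does $\Psi_{(Q;V)}(\delta_{\nu})\neq 0$ for $\nu>\mu_j$ follow, making the coefficients of $\Phi_{(Q;\nu)}$ at both vertices of the side nonzero and producing the nonzero root; moreover the same statement must be available again at every later step of the procedure, not just the first one.

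A second gap is your argument that the exponents produced tend to $+\infty$ ``because they all lie in $\Gamma$, a subgroup of finite rational rank, and the co-slopes cannot accumulate''. A finite-rank subgroup of $\mathbb{R}$ such as $\mathbb{Z}+\pi\mathbb{Z}$ is dense, so membership in $\Gamma$ rules out nothing; and since the element $E_{\nu}$ may be a vertex whose characteristic polynomial is identically zero (as happens in the paper's example in Section~\ref{sec:example}), monotonicity of the co-slopes alone does not preclude accumulation. The paper settles this in Proposition~\ref{pro:stabilization} via the pivot point: the tops $\Top(E_{\nu_i}(P_i))$ stabilize, the indicial polynomial there has only finitely many admissible roots, and after a reduction to quasi-solved form and truncation of the coefficients one shows the support lies in a finitely generated semigroup, whence $\nu_i\to\infty$ and $s(x)\in\Omega$ is a genuine solution. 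Some argument of this kind has to replace your ``bounded below by a positive constant'' assertion.
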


Before proving this result, we need to study the behaviour of the Newton polygon under
changes of variables of the form $y=c\,x^{\nu}+y$.
(See \cite{Cano-Fortuny-Asterisque,Cano-Fortuny-q-diff-2022}).
\begin{lemma}\label{le:behaviourNewtonChangVar}
Let $Q=P[c\,x^{\nu}]$. Then:
\begin{enumerate}
\item $\mathcal{N}(Q)$ is contained in the closed right half-plane
  defined by the supporting line $L_{\nu}(P)$.
\item Let $h$ the height of the point $\Top(E_\nu(P))$. Then
  $\mathcal{N}(P)$ and $\mathcal{N}(Q)$ are equal above $h$. In particular
  $\Top(E_\nu(P))=\Top(E_\nu(Q))$ and $L_{\nu}(P)=L_{\nu}(Q)$.
  Moreover, $P_{(\alpha,\rho)}=Q_{(\alpha,\rho)}$ for all
  $(\alpha,\rho)$ with $(\alpha,|\rho|)$ in the border of
  $\mathcal{N}(P)$ and $|\rho|\geq h$.
\item\label{property:3} The height of the point $\Bot(E_\nu(Q))$ is zero if and only if
  $\Phi_{(P;\nu)}(c)\neq 0$.
\item The following sequence of inequalities holds:
  \begin{displaymath}
     \height(\Top(E_{\nu}(P))) \geq \height(\Bot(E_{\nu}(Q))) \geq 
    \height(\Top(E_{\mu}(Q))),
  \end{displaymath}
  where $\mu>\nu$ in the last expression.
\end{enumerate}  
\end{lemma}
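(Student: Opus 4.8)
The plan is to analyze how the Newton polygon $\mathcal{N}(Q)$ sits relative to $\mathcal{N}(P)$ when we perform the substitution $y = c\,x^{\nu} + y$, i.e.\ $Q = P[c\,x^{\nu}]$. First I would recall the basic fact that $P[c\,x^{\nu}]$ is obtained by expanding each monomial $x^{\beta}y_0^{\rho_0}\cdots y_n^{\rho_n}$ of $P$ via Taylor/binomial expansion: substituting $y_{\kappa} \mapsto c\,\dmuk{\nu}{\kappa}\,x^{\nu-\shift\kappa}+y_{\kappa}$ produces terms whose cloud points $(\alpha, r)$ all lie on or to the \emph{right} of the original supporting line $L_{\nu}(P)$. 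Concretely, a monomial sitting at cloud point $(\alpha_0, r_0)$ with $\mu\,r_0 + \alpha_0 = \alpha_{\min}$ (where $\alpha_{\min}$ defines $L_{\nu}(P)$) expands into terms at points $(\alpha', r')$ with $r' \le r_0$ and $\nu\,r' + \alpha' \ge \nu\,r_0 + \alpha_0 \ge \alpha_{\min}$, which is exactly statement (1). This is essentially a bookkeeping argument about which powers of $x$ can be produced when lowering the $y$-degree by $r_0 - r'$; each unit drop in $y$-degree costs at least $\nu$ in the $x$-exponent (after accounting for the shift $\shift$ uniformly across the three operator cases via $\dmuk{\nu}{\kappa}$).

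Next I would prove (2): above height $h = \height(\Top(E_{\nu}(P)))$, no cloud point of $Q$ can differ from the corresponding one of $P$, because creating a new term at height $r' > h$ would require starting from a monomial of $P$ at height $r_0 \ge r' > h$ lying on or left of $L_{\nu}(P)$, but by definition of $h$ there are no points of $\mathcal{N}(P)$ strictly above $h$ on or left of $L_{\nu}(P)$ — so the only contributions at heights $> h$ are the untouched original monomials $P_{(\alpha,\rho)} = Q_{(\alpha,\rho)}$. This simultaneously gives $\Top(E_{\nu}(P)) = \Top(E_{\nu}(Q))$ and $L_{\nu}(P) = L_{\nu}(Q)$, and the claimed equality of coefficients on the border at heights $\ge h$.

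For (3), the point $\Bot(E_{\nu}(Q))$ has height zero precisely when $Q$ has a non-zero term with $y$-degree $0$ (a pure power of $x$) lying on the line $L_{\nu}(Q) = L_{\nu}(P)$; collecting all such contributions, the coefficient of the minimal such power of $x$ is exactly $\Phi_{(P;\nu)}(c)$ by the definition of the characteristic polynomial (each monomial at a point $V \in E_{\nu}(P)$ contributes $\Psi_{(P;V)}(\dmu{\nu})\,c^{\height(V)}$ to this coefficient after the substitution, as the $x$-exponent produced at $y$-degree $0$ is the same for every point on $L_{\nu}$). Hence this coefficient is non-zero iff $\Phi_{(P;\nu)}(c) \ne 0$, which is property \eqref{property:3} — this is where Lemma~\ref{le:key_lemma_Newton_Polygon} and the characteristic-polynomial formalism pay off. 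Finally, (4) follows by combining the previous parts: $\height(\Top(E_{\nu}(P))) = \height(\Top(E_{\nu}(Q))) \ge \height(\Bot(E_{\nu}(Q)))$ trivially since $\Bot$ is never above $\Top$ on the same element; and $\height(\Bot(E_{\nu}(Q))) \ge \height(\Top(E_{\mu}(Q)))$ for $\mu > \nu$ because by (1) all of $\mathcal{N}(Q)$ lies in the closed right half-plane of $L_{\nu}(Q)$, so a supporting line of strictly larger co-slope $\mu$ must touch $\mathcal{N}(Q)$ at a point no higher than $\Bot(E_{\nu}(Q))$ — this is a convexity observation about supporting lines of a convex region with a horizontal recession direction.

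I expect the main obstacle to be part (3), specifically making the identification of the degree-zero coefficient of $Q$ along $L_{\nu}(Q)$ with $\Phi_{(P;\nu)}(c)$ fully precise: one must carefully track the shift $\shift\,\omega(\rho)$ in the normalization $P_{(\alpha,\rho)}\,x^{\alpha+\shift\omega(\rho)}$, verify that substituting $c\,x^{\nu}$ (with the operator-dependent factors $\dmuk{\nu}{\kappa}$ packaged as $T^{\langle\rho\rangle}$ evaluated at $T = \dmu{\nu}$) genuinely reproduces the characteristic polynomial, and confirm that no cancellation or extra contribution from higher cloud points lands at that same minimal $x$-exponent. The other parts are geometric or combinatorial bookkeeping; only (3) requires the delicate algebraic matching, and it is the part that feeds directly into the Newton polygon iteration used for Theorem~\ref{the:main3}.
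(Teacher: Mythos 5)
A preliminary remark: the paper does not actually prove this lemma, it states it and refers to \cite{Cano-Fortuny-Asterisque,Cano-Fortuny-q-diff-2022}, so your argument has to be judged on its own terms; your overall strategy (expand the substitution monomial by monomial and track cloud points in the normalized coordinates $(\alpha,|\rho|)$) is the standard one. Parts (1), (3) and (4) are essentially sound: a term of $P$ at the cloud point $(\alpha_0,r_0)$ produces, after $y_\kappa\mapsto c\,\dmuk{\nu}{\kappa}\,x^{\nu-\shift\kappa}+y_\kappa$, terms exactly at the points $(\alpha_0+\nu(r_0-r'),r')$ with $0\le r'\le r_0$, i.e.\ on the co-slope-$\nu$ line through $(\alpha_0,r_0)$ (you only claim an inequality, which suffices for (1)); at height $0$ the only contributions landing on $L_\nu(P)$ come from points of $E_\nu(P)$, and their total is $\sum_{V\in E_\nu(P)}\Psi_{(P;V)}(\dmu{\nu})\,c^{\height(V)}=\Phi_{(P;\nu)}(c)$, which, together with the observation that $\Bot(E_\nu(Q))$ is a vertex and hence a cloud point, gives both directions of (3); and the second inequality of (4) is a general supporting-line fact (add the two inequalities expressing that $L_\nu(Q)$ and $L_\mu(Q)$ are supporting lines of the same polygon), so it does not even need (1).

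The genuine gap is in your part (2). You assert that above height $h$ no coefficient of $Q$ can differ from that of $P$, ``because creating a new term at height $r'>h$ would require starting from a monomial of $P$ at height $r_0\ge r'$ lying on or left of $L_\nu(P)$''. That is not the right condition, and the assertion is false: the coefficient of $Q$ at $(\alpha',r')$ receives contributions from every cloud point of $P$ of the form $(\alpha'-\nu t,\,r'+t)$, $t\ge 1$, i.e.\ from points lying up-left of the \emph{target} along the co-slope-$\nu$ direction, and such points may well lie strictly to the right of $L_\nu(P)$. For instance, for the Euler operator take $P=x^{-10}y_1+y_0^{3}$ and $\nu=1$: then $E_1(P)=\{(-10,1)\}$, so $h=1$, yet $Q=P[c\,x]$ acquires the new cloud point $(1,2)$ with coefficient $3c^{2}$, at height $2>h$. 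What is true -- and what the lemma actually claims -- is weaker and needs a two-step argument: (a) such new points always fall \emph{inside} $\mathcal{N}(P)$, because above height $h$ the border of $\mathcal{N}(P)$ descends with co-slopes $\le\nu$, so sliding down-right with co-slope exactly $\nu$ from a point of $\mathcal{N}(P)$ cannot leave the polygon while the height stays $\ge h$; this gives the equality of the polygons above $h$; and (b) for a point $V$ on the \emph{border} of $\mathcal{N}(P)$ with $\height(V)\ge h$, every potential source $(\alpha'-\nu t,r'+t)$ lies strictly to the left of a supporting line at $V$ of co-slope $<\nu$, or on $L_\nu(P)$ strictly above $\Top(E_\nu(P))$, hence is not a cloud point; this gives the ``Moreover'' part and with it $\Top(E_\nu(P))=\Top(E_\nu(Q))$ and $L_\nu(P)=L_\nu(Q)$. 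As written, your justification of (2) proves a false statement, and consequently establishes neither the coefficient preservation nor the polygon equality; since (4) and the later iteration (Lemma~\ref{le:caracterizacion_admisible_polynomial}, Proposition~\ref{pro:stabilization}) lean on (2), this step needs the repair above.
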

We have the following characterization of generalized
admissible polynomials in terms of the Newton polygon:
\begin{lemma}\label{le:caracterizacion_admisible_polynomial}
  Let $r(x)=c_0x^{\nu_0}+\cdots+c_k\,x^{\nu_k}$ with $\nu_0<\cdots<\nu_k$, and, as above,
  $P_0=P$ and $P_{i+1}=P_{i}[c_ix^{\nu_i}]$ for $i=0,\ldots,k$. Let us
  denote $Q=P[r(x)]=P_{k+1}$. Then $r(x)$ is an admissible
  generalized polynomial for $P$ if and only the bottom vertex of
  $E_{\nu_k}(Q)$ has height greater than o equal to one.
\end{lemma}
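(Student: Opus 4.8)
The plan is to reduce the whole statement to Lemma~\ref{le:behaviourNewtonChangVar}, and more precisely to its part~(\ref{property:3}). Applied to $P_i$ and the change $y=c_i\,x^{\nu_i}+y$ (so that the ``$Q$'' there is $P_{i+1}$), and using that heights are non-negative integers, part~(\ref{property:3}) says exactly that
$\Phi_{(P_i;\nu_i)}(c_i)=0$ if and only if $\Bot(E_{\nu_i}(P_{i+1}))$ has height $\ge 1$. Taking $i=k$ (so $P_{i+1}=P_{k+1}=Q$) already gives the equivalence ``$\Phi_{(P_k;\nu_k)}(c_k)=0$ $\iff$ $\Bot(E_{\nu_k}(Q))$ has height $\ge 1$''. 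Hence all that is left is to show that this single, topmost condition $\Phi_{(P_k;\nu_k)}(c_k)=0$ already forces the earlier ones $\Phi_{(P_i;\nu_i)}(c_i)=0$, $i<k$; the implication ``$r(x)$ admissible $\Rightarrow\Phi_{(P_k;\nu_k)}(c_k)=0$'' is trivial. Contrapositively, I must prove: \emph{if $\Phi_{(P_{i_0};\nu_{i_0})}(c_{i_0})\neq 0$ for some $i_0<k$, then $\Phi_{(P_k;\nu_k)}(c_k)\neq 0$.}

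The key ingredient is an elementary remark about Newton polygons, which I would state and prove first: if, for a non-zero $R\in\Omega[y_0,\dots,y_n]$, the element $E_\nu(R)$ is a \emph{vertex sitting on the $\alpha$-axis}, say $E_\nu(R)=\{(\alpha_0,0)\}$, then for every co-slope $\mu\ge\nu$ one still has $E_\mu(R)=\{(\alpha_0,0)\}$ — because for any $(\alpha,r)\in\mathcal{N}(R)$ one has $\mu r+\alpha=(\mu-\nu)r+(\nu r+\alpha)\ge\alpha_0$, with equality only at $(\alpha_0,0)$ — and moreover $\Phi_{(R;\mu)}(C)=\Psi_{(R;(\alpha_0,0))}(\dmu{\mu})=R_{(\alpha_0,0)}$ is a non-zero constant (the vertex lies in $\mathcal{C}(R)$, and $T^{\langle(0,\dots,0)\rangle}=1$ in all three operator cases), so $\Phi_{(R;\mu)}(c)\neq 0$ for every $c$. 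Informally: a height-$0$ vertex, once present, persists and blocks all larger co-slopes.

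With this remark the propagation is short. Assume $\Phi_{(P_{i_0};\nu_{i_0})}(c_{i_0})\neq 0$. By Lemma~\ref{le:behaviourNewtonChangVar}(\ref{property:3}), $\height(\Bot(E_{\nu_{i_0}}(P_{i_0+1})))=0$; then the second inequality in part~(4) of that lemma, applied with $\mu=\nu_{i_0+1}>\nu_{i_0}$, forces $\height(\Top(E_{\nu_{i_0+1}}(P_{i_0+1})))=0$, i.e. $E_{\nu_{i_0+1}}(P_{i_0+1})$ is a vertex at height $0$. Now I induct on $m$, for $i_0+1\le m\le k$, proving that $E_{\nu_m}(P_m)$ is a vertex at height $0$: the base case $m=i_0+1$ was just established, and for the step one passes from $P_m$ to $P_{m+1}=P_m[c_m\,x^{\nu_m}]$ using part~(2) of the lemma with $h=0$ (which keeps $\Top(E_{\nu_m}(\cdot))$ at height $0$, hence $E_{\nu_m}(P_{m+1})$ a vertex at height $0$) and then the remark above with $\nu=\nu_m,\ \mu=\nu_{m+1}$ to move this to $E_{\nu_{m+1}}(P_{m+1})$. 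At the end $E_{\nu_k}(P_k)$ is a vertex at height $0$, so by the remark $\Phi_{(P_k;\nu_k)}(c_k)\neq 0$, as wanted. Combining with the $i=k$ instance of part~(\ref{property:3}) yields the stated equivalence.

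The step I expect to be the main obstacle is making this ``persistence'' argument airtight: one must be sure that at every stage the element of co-slope $\nu_m$ is genuinely a \emph{vertex} (and not merely a side whose bottom end happens to lie on the $\alpha$-axis), for otherwise part~(2) of Lemma~\ref{le:behaviourNewtonChangVar} cannot be invoked with $h=0$ and the induction does not close; the geometric remark, combined with the second inequality of part~(4), is exactly what supplies this. Routine points to dispatch along the way are: that every $P_i$ is non-zero (full substitution $P\mapsto P[c\,x^\nu]$ being invertible), so that all the $\mathcal{N}(P_i)$ and the elements $E_{\nu_i}(P_i)$ exist; and the verification that $\Psi_{(R;(\alpha_0,0))}(T)$ is the constant $R_{(\alpha_0,0)}$ in the ordinary-derivative, Euler and $q$-difference cases alike.
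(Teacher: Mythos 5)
Your proposal is correct and follows essentially the paper's own route: both directions rest on part (3) of Lemma~\ref{le:behaviourNewtonChangVar}, and the nontrivial implication is obtained by taking a minimal failing index and propagating the height-zero phenomenon forward along the chain $P_{i_0+1},\dots,P_{k+1}$. The only difference is organizational: the paper propagates $\height(\Bot(E_{\nu_j}(P_{j+1})))=0$ directly by chaining the two inequalities of part (4), whereas you track a persistent height-$0$ vertex using part (2) together with your persistence remark (which is correct, and in substance a special case of the second inequality of part (4)) and then pass back through $\Phi_{(P_k;\nu_k)}(c_k)\neq 0$ before applying part (3) once more.
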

\begin{proof}
  Assume that $r(x)$ is admissible.
  Applying Lemma~\ref{le:behaviourNewtonChangVar} iteratively, we
  obtain the inequality $\height(\Bot(E_{\nu_k}(P_{k+1}))) \geq 1$.

  Conversely,
  assume that $r(x)$ is not admissible for $P$. Let $i$ be the minimum
  index for which equation~\eqref{eq:4} does not hold. By part (3) of
  Lemma~\ref{le:behaviourNewtonChangVar}, 
  $\height(\Bot(E_{\nu_i}(P_{i+1}))=0$ and  by part (4) of the same Lemma,
  \begin{displaymath}
    0=\height(\Bot(E_{\nu_i}(P_{i+1}))\geq
    \height(\Top(E_{\nu_{i+1}}(P_{i+1}))
    \geq \height(\Bot(E_{\nu_{i+1}}(P_{i+2}))\geq 0.
  \end{displaymath}
  Applying iteratively this argument we get 
 $\height(\Bot(E_{\nu_k}(P_{k+1}))=0$.
\end{proof}
The following result is also well-known but we include its proof for
the convenience of the reader.
\begin{corollary}\label{le:existencia_lado}
  Let $r(x)$ be an admissible generalized polynomial for $P=0$, and let $Q=P[r(x)]$. Then
  either $r(x)$ is already a solution of the equation $P=0$, or
  $\mathcal{N}(Q)$ has a side $E_\nu(Q)$ with $\nu>\max(\supp(r(x)))$.
\end{corollary}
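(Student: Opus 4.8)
The plan is to argue by contradiction, exploiting the characterization of admissibility in terms of the bottom vertex of $E_{\nu_k}(Q)$ given by Lemma~\ref{le:caracterizacion_admisible_polynomial} together with the monotonicity properties of Lemma~\ref{le:behaviourNewtonChangVar}. Write $r(x)=c_0x^{\nu_0}+\cdots+c_kx^{\nu_k}$ with $\nu_0<\cdots<\nu_k$ and set $\nu^{*}=\nu_k=\max(\supp r(x))$. Assume that $\mathcal{N}(Q)$ has \emph{no} side $E_\nu(Q)$ with $\nu>\nu^{*}$; I want to conclude that $r(x)$ is already a solution, i.e.\ that $Q=0$ (equivalently $P(r(x))=0$).

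First I would recall from Lemma~\ref{le:caracterizacion_admisible_polynomial} that admissibility of $r(x)$ gives $\height(\Bot(E_{\nu^{*}}(Q)))\geq 1$, so the point $\Bot(E_{\nu^{*}}(Q))$ — and hence every point of $E_{\nu^{*}}(Q)$ — has height at least $1$; in particular $\mathcal{N}(Q)$ has no point of height $0$ on the supporting line $L_{\nu^{*}}(Q)$ or to the left of it, and more precisely no point of $\mathcal{C}(Q)$ has height $0$ with abscissa $\le$ the abscissa of $L_{\nu^{*}}(Q)$. Now suppose for contradiction that $Q\neq 0$. Then $\mathcal{C}(Q)$ is non-empty; let $\alpha_{\min}$ be the smallest abscissa occurring in $\mathcal{C}(Q)$ and consider the element $E_{\nu}(Q)$ of $\mathcal{N}(Q)$ for co-slopes $\nu$ slightly larger than $\nu^{*}$. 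By part (4) of Lemma~\ref{le:behaviourNewtonChangVar} (applied with the roles $\nu^{*}<\nu$), $\height(\Top(E_\nu(Q)))\le \height(\Bot(E_{\nu^{*}}(Q)))$, but this top height is still $\ge 1$ unless $E_\nu(Q)$ has sunk to height $0$; since by hypothesis $E_\nu(Q)$ is a vertex (not a side) for every $\nu>\nu^{*}$, the Newton polygon to the right of $L_{\nu^{*}}(Q)$ consists of a single monotone staircase of vertices, and following it as $\nu\to\infty$ its height can only decrease. The contradiction will come from the fact that a non-zero $Q$ whose Newton polygon has no side of co-slope $>\nu^{*}$ must have its entire border at height $\ge 1$ arbitrarily far to the right, which is impossible: $\mathcal{N}(Q)$ contains the horizontal half-line $\mathbb{R}_{\ge 0}\times\{0\}$ translated to each point of $\mathcal{C}(Q)$, so the lowest points of $\mathcal{N}(Q)$ have height $0$, and the rightmost vertex of positive height must be joined to the height-$0$ part by a side of some finite co-slope — which, by the location of $E_{\nu^{*}}(Q)$, must have co-slope $>\nu^{*}$, contradicting our assumption.

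To turn this into a clean argument I would phrase it as follows. Since $r(x)$ is admissible, $h_0:=\height(\Bot(E_{\nu^{*}}(Q)))\ge 1$. If $Q\neq 0$, then $\mathcal{N}(Q)$ has a lowest edge lying on $\mathbb{R}\times\{0\}$ (because every point of $\mathcal{C}(Q)$ drags the half-line $\mathbb{R}_{\ge0}\times\{0\}$ with it), so $\height(\Bot(E_\mu(Q)))=0$ for all sufficiently large co-slopes $\mu$. Hence there is a co-slope at which the border of $\mathcal{N}(Q)$ drops from height $\ge h_0\ge 1$ down to height $0$; the element of the Newton polygon realizing that drop is necessarily a side (a vertex cannot connect two different heights), say $E_{\nu}(Q)$, and by part (1) of Lemma~\ref{le:behaviourNewtonChangVar} every point of $\mathcal{N}(Q)$ lies in the closed right half-plane of $L_{\nu^{*}}(P)=L_{\nu^{*}}(Q)$, which forces $\nu>\nu^{*}$ (a side of co-slope $\le\nu^{*}$ would have points strictly to the left of $L_{\nu^{*}}(Q)$, or would coincide with $E_{\nu^{*}}(Q)$, whose bottom has height $\ge1$). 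Thus $E_\nu(Q)$ is a side with $\nu>\nu^{*}=\max(\supp r(x))$, and if on the contrary no such side exists, then $Q=0$ and $r(x)$ is a solution of $P=0$. This establishes the corollary.

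The main obstacle I expect is pinning down precisely \emph{why} the side realizing the drop from height $\ge 1$ to height $0$ must have co-slope strictly greater than $\nu^{*}$ rather than equal to it: this is exactly where admissibility enters, via $\height(\Bot(E_{\nu^{*}}(Q)))\ge 1$, ruling out that the height-$0$ part of $\mathcal{N}(Q)$ already begins at or to the left of $L_{\nu^{*}}(Q)$. Everything else is a routine consequence of the convex-geometry description of $\mathcal{N}(Q)$ and parts (1) and (4) of Lemma~\ref{le:behaviourNewtonChangVar}.
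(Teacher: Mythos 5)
Your geometric core is sound and is essentially the paper's argument: once $\mathcal{N}(Q)$ actually reaches height $0$, the descent from $\Bot(E_{\nu_k}(Q))$ (which has height $\geq 1$ by Lemma~\ref{le:caracterizacion_admisible_polynomial}, where $\nu_k=\max(\supp r(x))$) down to a height-$0$ vertex must be realized by a side, and the half-plane containment in part (1) of Lemma~\ref{le:behaviourNewtonChangVar} forces its co-slope to exceed $\nu_k$ (the paper phrases this as $\beta'<\beta$, comparing the intersection of $L_{\nu_k}(Q)$ with the horizontal axis to the height-$0$ vertex). However, the way you set up the dichotomy contains a genuine error. You declare that ``$r(x)$ is already a solution'' means ``$Q=0$, equivalently $P(r(x))=0$''. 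These are not equivalent: $Q=P[r(x)]=P(r(x)+y_0,\ldots,r^{(n)}(x)+y_n)$ is the full substitution and is never the zero polynomial when $P\neq 0$ (substituting $-r(x)$ back recovers $P$). That $r(x)$ solves $P=0$ means only that the constant term $Q(0,\ldots,0)=P(r(x))$ vanishes, i.e.\ that $\mathcal{C}(Q)$ contains no point of height $0$, i.e.\ that $y=0$ is a solution of $Q=0$.

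This mislabeling is compounded by a misreading of the definition of $\mathcal{N}(Q)$: each $(\alpha,r)\in\mathcal{C}(Q)$ contributes the horizontal half-line $(\alpha,r)+(\mathbb{R}_{\geq 0}\times\{0\})$ \emph{at its own height} $r$, not at height $0$. Hence your claim that ``if $Q\neq 0$ then the lowest points of $\mathcal{N}(Q)$ have height $0$'' is false: when every point of $\mathcal{C}(Q)$ has positive height (exactly the case $P(r(x))=0$), the polygon never meets the horizontal axis and there need be no side of co-slope $>\nu_k$ at all --- for instance $P=y_1-y_0$ with the Euler operator and $r(x)=x$, where $Q=y_1-y_0$ has cloud $\{(0,1)\}$ and no sides. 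Since $Q\neq 0$ always holds, your argument as written would ``prove'' that a side of co-slope $>\nu_k$ always exists, contradicting the corollary's own dichotomy. The repair is small and lands you on the paper's proof: split according to whether $\mathcal{C}(Q)$ has a point of height $0$. If it does not, then $y=0$ solves $Q=0$ and $r(x)$ is a solution of $P=0$; if it does, your drop-to-height-$0$ argument (equivalently the paper's $\beta'<\beta$ comparison) produces the required side with $\nu>\nu_k$.
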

\begin{proof}
By Lemma~\ref{le:caracterizacion_admisible_polynomial}, we know that
$\height(\Bot(E_{\nu_k}(Q))) \geq 1$. 
If $\mathcal{N}(Q)$ does not intersect the horizontal axis of
coordinates then the null power series is a solution 
of $Q=0$ and therefore $r(x)$ is already a solution of the equation $P=0$.
Otherwise, there is a vertex of $\mathcal{N}(Q)$ of the form $(\beta,0)$. Let
  $(\beta',0)$ be the intersection of $L_{\nu_k}(Q)$ with the
  horizontal axis.
  Since $\height(\Bot(E_{\nu_k}(Q))) \geq 1$, the point
  $(0,\beta')\not\in \mathcal{C}(Q)$, and by~(1) in 
  Lemma~\ref{le:behaviourNewtonChangVar} 
  we get  $\beta'<\beta$, which implies the
  existence of a side $E_{\nu}(Q)$ of $\mathcal{N}(Q)$ with co-slope
  $\nu>\nu_k$.
\end{proof}

The next Lemma provides several relations between $P=0$
and some of the equations
$\frac{\partial P}{\partial y_i}=0$,
which will be useful in the proof of the main
Proposition of this section.

\begin{lemma}\label{le:multiple-root}
  Let $r(x)$ be an admissible generalized polynomial
  for $P$, and set  $Q=P[r(x)]$.
  Let $\nu> \max( \supp r(x))$ and assume 
  there exists a point 
  $V=(\alpha, r)\in E_\nu(Q)\cap \mathcal{C}(Q)$, with $r\geq 2$.
Take $\rho$ with $|\rho|=r$ such that 
$Q_{(\alpha,\rho)}\neq 0$, which exists because $V\in \mathcal{C}(Q)$.
  Let $i$ be an index $0\leq i\leq n$
   such that  $\rho_i\geq 1$, and set
  $\overline{P}=\frac{\partial  P}{\partial y_i}$ and
  $\overline{Q}=\overline{P}[r(x)]$.
  
   Then  $r(x)$ is an
  admissible generalized polynomial for
  $\overline{P}$, and 
  the point $\overline{V}=(\alpha, r-1)$ belongs to
  $E_{\nu}(\overline{Q})\cap \mathcal{C}(\overline{Q})$.
  Moreover, if $V$ is a vertex of $\mathcal{N}(Q)$ then $\overline{V}$ is a
  vertex of $\mathcal{N}(\overline{Q})$.
\end{lemma}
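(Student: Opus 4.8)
The plan is to unwind the relationship between the coefficients of $Q=P[r(x)]$ at a point $V$ and the coefficients of $\overline{Q}=\overline{P}[r(x)]$ at $\overline{V}=(\alpha,r-1)$, where $\overline{P}=\partial P/\partial y_i$. The crucial observation is the differentiation rule already recorded in the excerpt: since $\tfrac{\partial P}{\partial y_i}[r(x)] = \tfrac{\partial}{\partial y_i}\big(P[r(x)]\big)$, the coefficient $\overline{Q}_{(\alpha,\rho')}$ for $|\rho'| = r-1$ is, up to the nonzero integer factor $\rho'_i+1$, equal to $Q_{(\alpha,\rho'+e_i)}$. In particular the term $Q_{(\alpha,\rho)}\neq 0$ with $\rho_i\geq 1$ contributes, via $\rho' = \rho - e_i$, the nonzero coefficient $\rho_i\,Q_{(\alpha,\rho)}$ to $\overline{Q}_{(\alpha,\rho-e_i)}$; hence $\overline{V}=(\alpha,r-1)$ indeed lies in $\mathcal{C}(\overline{Q})$. (One must be mildly careful about the shift by $\shift\,\omega(\rho)$ in the bookkeeping of $x$-exponents, but differentiating with respect to $y_i$ lowers $\omega$ by $i$ in a way consistent with the normalization; the point $V=(\alpha,|\rho|)$ is defined by the pair (reduced $x$-exponent, $|\rho|$), so this is routine.)

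Next I would handle admissibility of $r(x)$ for $\overline{P}$. By Lemma~\ref{le:caracterizacion_admisible_polynomial} it suffices to show $\height(\Bot(E_{\nu_k}(\overline{Q})))\geq 1$, where $\nu_k=\max(\supp r(x))$ and $\overline{Q}=\overline{P}[r(x)]$. Now $\overline{Q}_{(\beta,\rho')}$ is a nonzero multiple of $Q_{(\beta,\rho'+e_i)}$, so every point of $\mathcal{C}(\overline{Q})$ is of the form $W-e_i'$ (i.e. has height one less than, and the same reduced $x$-coordinate as) some point $W\in\mathcal{C}(Q)$ whose $\rho$-exponent has a positive $i$-th component. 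Since $r(x)$ is admissible for $P$, Lemma~\ref{le:caracterizacion_admisible_polynomial} gives $\height(\Bot(E_{\nu_k}(Q)))\geq 1$, i.e. $\mathcal{C}(Q)$ has no point of height $0$ on the supporting line $L_{\nu_k}(Q)$. Translating the whole cloud one unit down, no point of height $-1$ appears and the points of $\mathcal{C}(\overline{Q})$ on $L_{\nu_k}(\overline{Q})$ all have height $\geq 0$; but we need $\geq 1$. To get that, I would use that the unique point of $\mathcal{C}(Q)$ of height $0$ on $L_{\nu_k}(Q)$ (namely $\Bot$ of that element, which has height $\geq 1$) rules out any height-$1$ point of $\mathcal{C}(Q)$ sitting strictly below the polygon on that line; more precisely, by part~(1) of Lemma~\ref{le:behaviourNewtonChangVar} applied at the last step, $\mathcal{N}(Q)$ lies in the right half-plane of $L_{\nu_k}(P_k)$ and the only element of co-slope $\nu_k$ that can have height-$1$ bottom is excluded by admissibility — hence height-$1$ points of $\mathcal{C}(Q)$ carry $x$-exponent strictly above the line, so after shifting down, height-$0$ points of $\mathcal{C}(\overline{Q})$ carry $x$-exponent strictly above $L_{\nu_k}(\overline{Q})$, forcing $\height(\Bot(E_{\nu_k}(\overline{Q})))\geq 1$.

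For the co-slope statement, the supporting line $L_{\nu}(Q)$ has equation $\nu\,r + \alpha' = c_0$ with $c_0$ minimal; since $V=(\alpha, r)\in E_\nu(Q)$, we have $\nu\,r + \alpha = c_0$. The point $\overline{V}=(\alpha, r-1)\in\mathcal{C}(\overline{Q})$ satisfies $\nu\,(r-1) + \alpha = c_0 - \nu$; I would show this value is minimal over $\mathcal{C}(\overline{Q})$, i.e. $\overline{V}\in E_\nu(\overline{Q})$. Indeed, any $(\beta,s)\in\mathcal{C}(\overline{Q})$ comes from a $(\beta,s+1)\in\mathcal{C}(Q)$, which lies weakly to the right of $L_\nu(Q)$: $\nu(s+1)+\beta\geq c_0$, hence $\nu s+\beta\geq c_0-\nu$, with equality exactly when the originating point lies on $L_\nu(Q)$. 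This shows $L_\nu(\overline{Q})$ has the same co-slope $\nu$ and that $\overline{V}\in E_\nu(\overline{Q})$. For the vertex assertion: if $V$ is a vertex of $\mathcal{N}(Q)$, then $E_\nu(Q)=\{V\}$, so the only point of $\mathcal{C}(Q)$ on $L_\nu(Q)$ is $V$; by the correspondence, the only point of $\mathcal{C}(\overline{Q})$ on $L_\nu(\overline{Q})$ is $\overline{V}$, hence $E_\nu(\overline{Q})=\{\overline{V}\}$ is a vertex of $\mathcal{N}(\overline{Q})$.

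The main obstacle I anticipate is the admissibility step: separating a genuine contradiction with admissibility of $r(x)$ for $P$ from the possibility that $\overline{Q}$ acquires a height-$0$ point on the last supporting line merely because differentiation stripped away a protecting higher-height term. The clean way around this, which I would pursue, is to argue entirely on the border of $\mathcal{N}(Q)$: part~(2) of Lemma~\ref{le:behaviourNewtonChangVar} tells us $P_{(\beta,\rho)} = Q_{(\beta,\rho)}$ for border points of height $\geq h$, so passing to $\partial/\partial y_i$ commutes with the relevant part of the Newton data, and the admissibility inequality $\height(\Bot(E_{\nu_k}))\geq 1$ is preserved under the height-$1$ downward shift precisely because the element $E_{\nu_k}$ of $Q$ meets height $0$ only through its $\Bot$ vertex, which has height $\geq 1$ and therefore never originates a height-$0$ point of $\mathcal{C}(\overline{Q})$ on the shifted line. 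Once this is pinned down, everything else is bookkeeping with the correspondence $\rho'\leftrightarrow\rho'+e_i$.
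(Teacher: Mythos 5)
Your first and third steps (that $\overline{V}\in\mathcal{C}(\overline{Q})$ via $\overline{Q}_{(\alpha,\rho-e_i)}=\rho_i\,Q_{(\alpha,\rho)}$, and that $\overline{V}\in E_\nu(\overline{Q})$ because the translate $(\beta,s+1)\mapsto(\beta,s)$ preserves the supporting-line inequality) are correct and coincide with the paper's argument. The genuine gap is in the admissibility step, which you yourself flag as the main obstacle: your argument hinges on the claim that ``the only element of co-slope $\nu_k$ that can have height-$1$ bottom is excluded by admissibility''. That is false: admissibility of $r(x)$ for $P$ gives, via Lemma~\ref{le:caracterizacion_admisible_polynomial}, only $\height(\Bot(E_{\nu_k}(Q)))\geq 1$, so a height-$1$ point of $\mathcal{C}(Q)$ may perfectly well lie \emph{on} $L_{\nu_k}(Q)$; if its exponent $\rho$ has $\rho_i\geq 1$ it descends to a height-$0$ point of $\mathcal{C}(\overline{Q})$, and your strictness conclusion evaporates. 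Moreover, even if all height-$1$ points of $\mathcal{C}(Q)$ were strictly to the right of $L_{\nu_k}(Q)$, that would not yet give $\height(\Bot(E_{\nu_k}(\overline{Q})))\geq 1$, because $L_{\nu_k}(\overline{Q})$ need not be the downward translate of $L_{\nu_k}(Q)$: it can sit strictly further right when the points of $\mathcal{C}(Q)$ on $L_{\nu_k}(Q)$ are annihilated by $\partial/\partial y_i$, and then a height-$0$ point could still realize the new minimum. What is missing is precisely the hypothesis $r\geq 2$, which you never use in this step. The paper's route: having shown $\overline{V}=(\alpha,r-1)\in E_\nu(\overline{Q})$ with $r-1\geq 1$, one has $\height(\Top(E_\nu(\overline{Q})))\geq 1$, and the convexity inequality of part (4) of Lemma~\ref{le:behaviourNewtonChangVar}, applied to the single polygon $\mathcal{N}(\overline{Q})$ with $\nu>\nu_k=\max\supp r(x)$, gives $\height(\Bot(E_{\nu_k}(\overline{Q})))\geq\height(\Top(E_\nu(\overline{Q})))\geq 1$; Lemma~\ref{le:caracterizacion_admisible_polynomial} then yields admissibility of $r(x)$ for $\overline{P}$. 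This bypasses entirely the question of where $L_{\nu_k}(\overline{Q})$ lies.

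There is also a smaller flaw in your vertex argument: from ``$V$ is a vertex of $\mathcal{N}(Q)$'' you infer $E_\nu(Q)=\{V\}$, but $E_\nu(Q)$ may be a \emph{side} having $V$ as an endpoint, in which case other points of $\mathcal{C}(Q)$ lie on $L_\nu(Q)$ and your ``unique point'' correspondence breaks down. The paper avoids this by using that a vertex belongs to elements of two distinct co-slopes: choose $\nu'\neq\nu$ with $\nu'>\max\supp r(x)$ and $V\in E_{\nu'}(Q)$, apply the already-proved membership statement for both $\nu$ and $\nu'$, and conclude $\overline{V}\in E_\nu(\overline{Q})\cap E_{\nu'}(\overline{Q})$, hence $\overline{V}$ is a vertex of $\mathcal{N}(\overline{Q})$. (One could also salvage your line by showing $\overline{V}$ is the $\Top$ or $\Bot$ of $E_\nu(\overline{Q})$, but as written the singleton claim does not cover the side case.)
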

\begin{proof}
  The cloud of points of $\overline{P}$ is a subset of the image of the points of
  $\mathcal{C}(P)$ with height at least $1$ under the map
  $\Delta(\alpha, r)= (\alpha, r-1)$.
  By the chain rule $\overline{Q}=\frac{\partial Q}{\partial y_i}$,
  hence $\mathcal{C}(\overline{Q})$ is a subset of
the image of the points of $\mathcal{C}(Q)$ with height at least one
under $\Delta$.
  Moreover
  $\overline{Q}_{(\alpha,\rho-e_i)}=\rho_i\,Q_{(\alpha,\rho)}$, so that
   the point
   $\overline{V}=(\alpha, r-1)$ belongs to
   $\mathcal{C}(\overline{Q})\cap \Delta(L_\nu(Q))$.
   As $\Delta$ is an affine translation, the existence of
   $\overline{V}$ implies that  $\Delta(L_\nu(Q))$ is the
   supporting line of co-slope $\nu$ of $\mathcal{C}(\overline{Q})$.
  Therefore $\overline{V}\in E_{\nu}(\overline{Q})$.

  As $\height \overline{V}=r-1\geq 1$, and $\overline{V}\in E_{\nu}(\overline{Q})$,
  then $\height(\Top(E_{\nu}(\overline{Q})))\geq 1$.
  Let  $\nu_k=\max\supp r(x)$.
  Since $\nu>\nu_k$, by (4) of Lemma
  \ref{le:behaviourNewtonChangVar}
  we get $\height
  \Bot(E_{\nu_k}(\overline{Q}))\geq \Top(E_{\nu}(\overline{Q}))\geq 1$. 
   Hence by Lemma~\ref{le:caracterizacion_admisible_polynomial}, the
   generalized polynomial $r(x)$
   is admissible for $\overline{P}$.

 Assume now that $V$ is a vertex of $\mathcal{N}(Q)$, so that there
 exists $\nu'>\max\supp r(x)$ with $\nu\neq \nu'$ and $V\in E_\nu(Q)\cap
 E_{\nu'}(Q)$. We conclude that  $\overline{V}\in  E_\nu(\overline{Q})\cap
 E_{\nu'}(\overline{Q})$ which proves that $\overline{V} $ is a vertex
 of~$\mathcal{N}(\overline{Q})$. 
\end{proof}

\subsection{Admissible polynomials with maximal rational rank and
  characte\-ristic polynomials}

The goal of this subsection is to prove that
if $r(x)$ is an
admissible generalized polynomial for the equation $P=0$
which has maximum rational rank 
then 
the characteristic
polynomials of the relevant sides of the Newton polygon of $P[r(x)]$
have always non-zero roots. That is, after the last exponent of
$r(x)$, the equation $P[r(x)]=0$ behaves as an algebraic curve
and $r(x)$ can always be completed to a solution of $P=0$.

Throughout this subsection,  $r(x)$ will be denote an admissible
generalized polynomial for $P=0$ such that
\begin{equation}\label{eq:hipothesis.Th3}
  \dim_{\mathbb{Q}}
  \frac{\langle \supp P\cup \supp r(x)\rangle}
  {\langle \supp P \rangle }
= m\geq n, 
\end{equation}
where $n$ is the order of $P=0$.
Let  $\Gamma=\langle \supp P\rangle$ and
write $r(x)$ as in
Equation~\eqref{eq:r(x)}, satisfying properties (i)--(iv). Thus, we have:
\begin{equation}\label{eq:r(x)-2}
    r(x) =
    \sum_{j=0}^{m}\big(
  c_{j,0}\,x^{\mu_{j,0}} + c_{j,1}\,x^{\mu_{j,1}} + \cdots + c_{j,t_j}\,x^{\mu_{j,t_j}}
  \big),
\end{equation}
and we set, from now on, $Q=P[r(x)]$.

As in the previous section, we denote $\mu_j=\mu_{j,0}$ and
either
$\Gamma'=\langle\Gamma\cup \{\mu_1,\ldots,\mu_m\}\rangle$
or $\Gamma'=\langle\Gamma\cup \{\mu_1,\ldots,\mu_m,1\}\rangle$
according as $1\in \langle\Gamma\cup \{\mu_1,\ldots,\mu_m\}$ or not.
The derivations
$\DD_j$, $j=1,\ldots,m$, on $\mathbb{C}((x^{\Gamma'}))$ are defined as in Lemma.
Notice that
$Q\in \mathbb{C}((x^{\Gamma'}))[y_0,\ldots,y_n]$. Given
$\rho=(\rho_0,\rho_1,\ldots,\rho_n)\in \mathbb{Z}_{\geq 0}^{n+1}$ and $\alpha\in \Gamma'$, we write
$Q_\rho\in \mathbb{C}((x^{\Gamma'}))$ for the coefficient of
$y_0^{\rho_0}y_1^{\rho_1}\cdots y_n^{\rho_n}$ in $Q$, and
$Q_{(\alpha,\rho)}\in \mathbb{C}$ for the coefficient of
$x^{\alpha+\shift\,\omega(\rho)}$ in
$Q_{\rho}$. Recall that $e_i$ is a vector with $n+1$ components, all zero except the
$i+1-th$, which is $1$.

The existence of the admissible generalized polynomial $r(x)$ generates conditions on the
coefficients of $Q$ at elements of co-slope greater than $\max\supp
r(x) =\mu_{m,t_m}$.

\begin{lemma}\label{le:sum-zero}
  Let $\nu>\max\supp r(x)=\mu_{m,t_m}$ and consider the element $E_{\nu}(Q)$
  of
  co-slope $\nu$ of $\mathcal{N}(Q)$. Assume there is a point
  $V=(\alpha,r)\in E_{\nu}(Q)$, with $r=\height(V)\geq 1$.  
  Then for all
  $\rho=(\rho_0,\rho_1,\ldots,\rho_n)\in \mathbb{Z}_{\geq 0}^{n+1}$ with $|\rho|=r-1$, and
  for all $j=1,2,\ldots,m$, the following equality holds:
    \begin{equation}\label{eq:main}
      \sum_{\kappa=0}^{n}(\rho_\kappa+1)\,
      \dmuk{\mu_{j}}{\kappa}\,Q_{(\alpha,\rho+e_\kappa)}
      =0.
    \end{equation}
  \end{lemma}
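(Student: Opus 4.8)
The plan is to specialize the key identity \eqref{eq:2} to the substitution $s(x)=r(x)$ and then match the lowest-order terms on both sides. Since $P[r(x)]_\rho=Q_\rho$, and since every $\DD_j$ ($j=1,\dots,m$) vanishes on $\mathbb{C}((x^{\Gamma}))$, which contains the coefficients of $P$, while $r(x)\in\mathbb{C}((x^{\Gamma'}))$, equation \eqref{eq:2} reads
\[
\DD_j(Q_\rho)=\sum_{\kappa=0}^{n}(\rho_\kappa+1)\,Q_{\rho+e_\kappa}\,\DD_j\big(r^{(\kappa)}(x)\big).
\]
I would fix once and for all a $j\in\{1,\dots,m\}$ and a $\rho$ with $|\rho|=r-1$ (legitimate since $r=\height(V)\ge 1$), and read off the coefficient of $x^{\alpha+\shift\,\omega(\rho)+\mu_j}$ on each side.

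On the right-hand side, first note that, because $|\rho+e_\kappa|=r$ and $V=(\alpha,r)$ lies on the supporting line $L_\nu(Q)$, every cloud point of $Q$ at height $r$ has first coordinate $\ge\alpha$; hence $\ord_x Q_{\rho+e_\kappa}\ge \alpha+\shift\,\omega(\rho+e_\kappa)=\alpha+\shift\,\omega(\rho)+\shift\,\kappa$, with $Q_{(\alpha,\rho+e_\kappa)}$ the coefficient of that minimal exponent. On the other hand, exactly as in the common part of the proof of Theorems~\ref{the:main1} and~\ref{the:main2} --- using properties (i)--(iv) of \eqref{eq:r(x)-2} together with $[\mu_{j,0}]_{\mu_j}=1$ and $[\mu_{j',i}]_{\mu_j}=0$ for $(j',i)<(j,0)$ --- one has $\DD_j(r^{(\kappa)}(x))=c_{j,0}\,\dmuk{\mu_j}{\kappa}\,x^{\mu_j-\shift\,\kappa}+\cdots$, the dots denoting strictly higher powers of $x$. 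Multiplying the two expansions, every summand on the right has $x$-order $\ge\alpha+\shift\,\omega(\rho)+\mu_j$, and for each $\kappa$ only the product of the two leading terms contributes to that exponent (raising either $\alpha$ or $\mu_j$ strictly increases it); therefore the coefficient of $x^{\alpha+\shift\,\omega(\rho)+\mu_j}$ on the right is exactly $c_{j,0}\sum_{\kappa=0}^{n}(\rho_\kappa+1)\,\dmuk{\mu_j}{\kappa}\,Q_{(\alpha,\rho+e_\kappa)}$.

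For the left-hand side, since $\DD_j$ does not alter $\supp Q_\rho$ and only rescales coefficients, the coefficient of $x^{\alpha+\shift\,\omega(\rho)+\mu_j}=x^{(\alpha+\mu_j)+\shift\,\omega(\rho)}$ in $\DD_j(Q_\rho)$ is a scalar multiple of $Q_{(\alpha+\mu_j,\rho)}$. But the point $(\alpha+\mu_j,\,r-1)$ lies strictly below $L_\nu(Q)$: indeed $\nu(r-1)+(\alpha+\mu_j)=(\nu r+\alpha)-(\nu-\mu_j)<\nu r+\alpha$, because $\nu>\max\supp r(x)=\mu_{m,t_m}\ge\mu_{j,0}=\mu_j$, and $r-1\ge 0$. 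Hence $(\alpha+\mu_j,r-1)\notin\mathcal{N}(Q)\supseteq\mathcal{C}(Q)$, so $Q_{(\alpha+\mu_j,\rho)}=0$ and the left-hand coefficient vanishes. Equating the two and dividing by $c_{j,0}\neq 0$ (property (iii)) yields \eqref{eq:main}. The only delicate part is the bookkeeping with the shift $\shift$ and with $\omega(\rho)$ when tracking which monomials hit the exponent $\alpha+\shift\,\omega(\rho)+\mu_j$; the substance of the argument is just the Newton-polygon geometry, namely that the auxiliary point $(\alpha+\mu_j,r-1)$ falls below the supporting line precisely because the co-slope $\nu$ exceeds every exponent of $r(x)$, in particular $\mu_j$.
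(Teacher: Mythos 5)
Your proof is correct and follows essentially the same route as the paper's: specialize \eqref{eq:2} to $Q=P[r(x)]$, expand $Q_{\rho+e_\kappa}$ and $\DD_j(r^{(\kappa)}(x))$ to leading order using properties (i)--(iv), and observe that the left-hand side contributes nothing at the exponent $\alpha+\mu_j+\shift\,\omega(\rho)$ by the Newton-polygon geometry, then divide by $c_{j,0}\neq 0$. Your treatment of the left-hand side (the point $(\alpha+\mu_j,r-1)$ lies strictly on the wrong side of $L_{\nu}(Q)$, so $Q_{(\alpha+\mu_j,\rho)}=0$) is precisely the observation the paper records at the start of its proof via the point $V_j$; the paper's write-up then phrases the same fact as the order bound $\ord_x Q_\rho\geq \alpha+\nu+\shift\,\omega(\rho)$, so the two arguments coincide in substance.
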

  \begin{remark} 
    Notice that  each
    $\rho=(\rho_0,\ldots,\rho_n)$ with $|\rho|=r-1$ gives rise to $m$
    conditions  \eqref{eq:main} on the coefficients 
of $Q_V$.
  \end{remark}

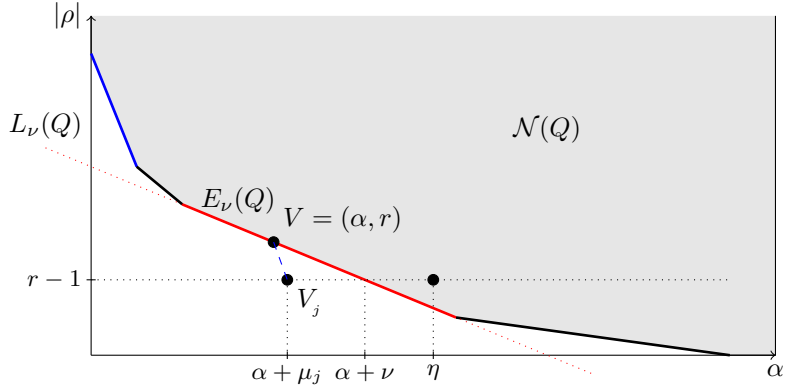
\begin{figure}
  \centering
  \begin{tikzpicture}[x=0.6cm,y=0.5cm]
     \draw[fill=black!10, draw opacity=0.1] (0,9) -- (0,8) -- (1,5) -- (2,4) -- (8,1) --
     (14,0) -- (15,0) -- (15,9);
     \draw (10,6) node {$\mathcal{N}(Q)$};
    \draw[->] (0,0) -- (15,0) node[anchor=north]{$\alpha$};
    \draw[->] (0,0) -- (0,9) node[anchor=east]{$|\rho|$};
    \draw[blue,line width=1pt] (0,8) -- (1,5);
    \draw[line width=1pt] (1,5) -- (2,4);
    \draw [red,line width=1pt](2,4) -- (8,1);
    \draw [black,line width=1pt](8,1) -- (14,0);
    \draw[red,dotted] (-1,5.5) -- (11,-0.5);
    \draw (-1,5.5) node[anchor= south] {$L_\nu(Q)$};
    \draw (2.2, 3.5) node[anchor=south west]{$E_{\nu}(Q)$};
    \draw[fill=black] (4,3) circle[radius=2pt]
    node[anchor=south west]{$V=(\alpha,r)$};
    \draw[fill=black] (4.3,2) circle[fill=black,radius=2pt]
    node[anchor=north west]{$V_{_{j}}$};
     \draw[blue, dashed] (4,3) -- (4.3,2);
     \draw[dotted] (0,2) -- (14,2);
     \draw (0,2) node[anchor = east]{\small$r-1$};
     \draw (-0.1,2)--(0.1,2);
      \draw [dotted](4.3,2) -- (4.3,0); 
      \draw (4.3,0) node[anchor = north]{\small$\alpha+\mu_j$};
      \draw (4.3,0.1) -- (4.3,-0.1);
    \draw [dotted](6,2) -- (6,0);
    \draw (6,0) node[anchor = north]{\small$\alpha+\nu$};
    \draw (6,0.1) -- (6,-0.1);
    \draw[fill=black] (7.5,2) circle[radius=2pt];
     \draw [dotted](7.5,2) -- (7.5,0);
     \draw (7.5,0) node[anchor = north]{\small$\eta$};
     \draw (7.5,0.1) -- (7.5,-0.1);
     
  \end{tikzpicture}
  \caption{Newton Polygon of $Q=P[r(x)]$ and some elements that
    appear in the proof of Lemma~\ref{le:sum-zero}.
  } 
  \label{fig:argument-order-1}
\end{figure}

\begin{proof}
   Fix for the all this proof a $\rho\in \mathbb{Z}_{\geq 0}^{n+1}$ with $|\rho|=r-1$.
  Let $V_j=(\alpha+\mu_j,r-1)$ for $j=1\,\ldots, m$. Notice that
  $V_j\not\in \mathcal{N}(Q)$ because $V\in E_{\nu}(Q)$ and $\nu>\mu_{m,t_m}\geq
  \mu_j$. Thus, $Q_{(\alpha+\mu_j,\rho)}=0$.

  Since $\mu_1,\ldots,\mu_m$ and $\Gamma$ fulfill the hypothesis of
  Lemma~\ref{le:derivations-x^mu}, we may apply the derivations
  $\DD_j$ to $Q$, and
  for each
  $j=1,\ldots, m$,
  Equation~\eqref{eq:2} becomes
  \begin{equation}
    \label{eq:5}
   \DD_j(Q_{\rho})=
  \sum_{\kappa=0}^{n}(\rho_\kappa+1)
  (Q_{\rho+e_{\kappa}})
  \,\DD_j(r^{(\kappa)}(x)).
\end{equation}
 Since $V=(\alpha,r)$ is on the border of $\mathcal{N}(Q)$ and
$|\rho+e_{\kappa}|=r$, we have:
\begin{displaymath}
  Q_{\rho+e_\kappa}=
  Q_{(\alpha,\rho+e_\kappa)}x^{\alpha+\shift\,\omega(\rho+e_\kappa)}+\cdots
  =
  Q_{(\alpha,\rho+e_\kappa)}x^{\alpha+\shift\,\omega(\rho)+\shift\,\kappa}+\cdots
\end{displaymath}
where the last equality holds because $\omega(\rho+e_\kappa)
=\omega(\rho)+\kappa$.
Because of Properties (i)--(iv) in the expression
of $r(x)$ the following equality holds:
\begin{displaymath}
 \DD_j(r^{(\kappa)}(x))=
  c_{j,0}\,\dmuk{\mu_j}{\kappa}\,x^{\mu_j-\shift\,\kappa}+\cdots,
\end{displaymath}%
where recall that $c_{j,0}\neq 0$ for each $j=1,\ldots,m$.
Hence, the right hand side of Equation~\eqref{eq:5} is equal to
\begin{equation}\label{eq:EcuacionClave2}
  c_{j,0}\left(
     \sum_{\kappa=0}^{n}(\rho_\kappa+1)\,
     \dmuk{\mu_{j}}{\kappa}\,Q_{(\alpha,\rho+e_\kappa)}
   \right)
   x^{\alpha+\mu_j+\shift\,\omega(\rho)}+\cdots,
\end{equation}%
and the left hand side of Equation~\eqref{eq:5} has, by definition of
$\DD_j$, order at least
$\ord_{x}Q_{\rho}$. Thus, if $Q_\rho=0$ we are done.
So we assume $Q_\rho\neq 0$ from now on.
Set then $\ord_{x}Q_{\rho}=\eta+\shift\,\omega(\rho)$.

It is enough to prove
that $\eta>\alpha+\mu_{j}$ because this implies that the first
term of~\eqref{eq:EcuacionClave2} is zero
which combined with~\eqref{eq:5} gives Equation~\eqref{eq:main}. 
By definition of $L_\nu(Q)$, any
point in $\mathcal{C}(Q)$ belongs to the closed right halfplane defined by
$L_{\nu}(Q)$. Since $V=(\alpha,r)\in E_\nu(Q)\subset L_{\nu}(Q)$,
the point $(\alpha+\nu,r-1)\in L_{\nu}(Q)$ because $L_{\nu}(Q)$ is a
line with co-slope $\nu$. Therefore,
any point with ordinate $r-1$ in $\mathcal{N}(Q)$ has abscissa greater
than or equal to $\alpha+\nu$.
As $Q_\rho\neq 0$ then  $Q_{(\eta+\shift\,\omega(\rho),\rho)}\neq 0$ so that
$(\eta,r-1)\in \mathcal{C}(Q)$ and hence $\eta\geq \alpha+\nu>\alpha+\mu_{j}$.  
\end{proof}

The key consequence of Lemma \ref{le:sum-zero} is the following
result.

\begin{proposition}\label{prop:main}
  Let
  $\nu>\max\supp r(x)$ and  $V=(\alpha, r)\in E_{\nu}(Q)$ with
  $Q_V\neq 0$. The indicial polynomial $\Psi_{(Q;V)}(T)$ is a power
  of the polynomial $(T-\dmu{\mu_1})\cdots(T-\dmu{\mu_n})$ up to a non-zero
  constant $c$:
  \[
    \Psi_{(Q;V)}(T) = c \,\bigg(\prod_{j=1}^n(T-\dmu{\mu_j})\bigg)^{r}.
  \]
  As a consequence, $\Psi_{(Q;V)}(\dmu{\nu})\neq 0$ because
  $\nu>\max\supp r(x)>\mu_j$ for all $j=1,\ldots,n$, so that if $E_{\nu}(Q)$ is a
  side of $\mathcal{N}(Q)$, then the characteristic polynomial
  $\Phi_{(Q,\nu)}(C)$ has at least one 
  non-zero root.
\end{proposition}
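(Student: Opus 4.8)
The plan is to attach to the vertex $V$ a single homogeneous form and read Lemma~\ref{le:sum-zero} as a linear‑algebra statement about it. If $\height(V)=0$ the claim is trivial, since then $\Psi_{(Q;V)}(T)=Q_{(\alpha,0)}\neq0$ is a nonzero constant. So suppose $r=\height(V)\geq1$ and set
\[
  F(z_0,\dots,z_n)=\sum_{|\rho|=r}Q_{(\alpha,\rho)}\,z_0^{\rho_0}\cdots z_n^{\rho_n},
\]
a homogeneous polynomial of degree $r$, nonzero precisely because $Q_V\neq0$. Since $\partial F/\partial z_\kappa=\sum_{|\rho|=r-1}(\rho_\kappa+1)\,Q_{(\alpha,\rho+e_\kappa)}\,z_0^{\rho_0}\cdots z_n^{\rho_n}$, the identities~\eqref{eq:main} of Lemma~\ref{le:sum-zero} (one for each $\rho$ with $|\rho|=r-1$ and each $j=1,\dots,m$) amount exactly to
\[
  \sum_{\kappa=0}^{n}\dmuk{\mu_j}{\kappa}\,\frac{\partial F}{\partial z_\kappa}=0,\qquad j=1,\dots,m ,
\]
that is, $F$ is killed by the constant‑coefficient directional derivative along the vector $v_j=(\dmuk{\mu_j}{0},\dots,\dmuk{\mu_j}{n})\in\mathbb{C}^{n+1}$ for every $j$.

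Next I would prove that $v_1,\dots,v_n$ are linearly independent. In all three operator cases one can write $\dmuk{\mu_j}{\kappa}=\phi_\kappa(\dmu{\mu_j})$, where $\phi_\kappa(T)=T^{\kappa}$ for the Euler and $q$‑difference operators and $\phi_\kappa(T)=T(T-1)\cdots(T-\kappa+1)$ for $d/dx$; in each case $\phi_0,\dots,\phi_{n-1}$ is a basis of the polynomials of degree $\leq n-1$, and the numbers $\dmu{\mu_1},\dots,\dmu{\mu_n}$ are pairwise distinct (the $\mu_j$ are distinct, and $|q|\neq1$ in the $q$‑difference case). Hence $\sum_j a_j v_j=0$ forces $\sum_j a_j\,g(\dmu{\mu_j})=0$ for every polynomial $g$ of degree $\leq n-1$; taking $g=\prod_{i\neq j_0}(T-\dmu{\mu_i})$ gives $a_{j_0}=0$, for each $j_0$. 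Thus $W=\langle v_1,\dots,v_n\rangle$ is $n$‑dimensional and $F$ is annihilated by $v\cdot\nabla$ for all $v\in W$. Choosing a basis $w_1,\dots,w_n$ of $W$, completing it to a basis $w_0,\dots,w_n$ of $\mathbb{C}^{n+1}$, and letting $u_0,\dots,u_n$ be the dual coordinates, a homogeneous polynomial annihilated by $\partial/\partial u_1,\dots,\partial/\partial u_n$ is a polynomial in $u_0$ alone; hence $F=c\,\ell^{\,r}$, where $\ell=\sum_\kappa b_\kappa z_\kappa$ is the (unique up to a scalar) linear form vanishing on $W$, and $c\neq0$ because $F\neq0$.

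It then remains to identify $\ell$ and to carry $F=c\,\ell^{r}$ back to the variable $T$. The condition $\ell|_W=0$ reads $\sum_\kappa b_\kappa\phi_\kappa(\dmu{\mu_j})=0$ for $j=1,\dots,n$; thus the polynomial $B(T)=\sum_\kappa b_\kappa\phi_\kappa(T)$ has degree $\leq n$ and vanishes at the $n$ distinct points $\dmu{\mu_1},\dots,\dmu{\mu_n}$, so $B(T)=\gamma\prod_{j=1}^{n}(T-\dmu{\mu_j})$ with $\gamma\neq0$. On the other hand $T^{\langle\rho\rangle}=\prod_{\kappa}\phi_\kappa(T)^{\rho_\kappa}$ directly from the definition of $T^{\langle\rho\rangle}$, so substituting $z_\kappa=\phi_\kappa(T)$ in $F$ recovers the indicial polynomial: $\Psi_{(Q;V)}(T)=F(\phi_0(T),\dots,\phi_n(T))$. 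Combining the two, $\Psi_{(Q;V)}(T)=c\,B(T)^{r}=c\,\gamma^{r}\big(\prod_{j=1}^{n}(T-\dmu{\mu_j})\big)^{r}$, which is the claimed formula after renaming $c\,\gamma^{r}$ as $c$. For the consequence, $\dmu{\nu}\neq\dmu{\mu_j}$ for $j=1,\dots,n$ (since $\nu>\max\supp r(x)\geq\mu_j$, together with $|q|\neq1$ in the $q$‑difference case), so $\Psi_{(Q;V)}(\dmu{\nu})\neq0$; and when $E_\nu(Q)$ is a side, its endpoints $\Top(E_\nu(Q))$ and $\Bot(E_\nu(Q))$ are vertices of $\mathcal{N}(Q)$, hence belong to $\mathcal{C}(Q)$ and have distinct heights, so in $\Phi_{(Q,\nu)}(C)=\sum_{V'\in E_\nu(Q)}\Psi_{(Q;V')}(\dmu{\nu})\,C^{\height(V')}$ two monomials of different degree have nonzero coefficient, whence $\Phi_{(Q,\nu)}$ has a root $C\neq0$.

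The step I expect to be the main obstacle is the first one: correctly packaging Lemma~\ref{le:sum-zero} as the single assertion that $F$ is killed by the $n$ independent directional derivatives $v_1\cdot\nabla,\dots,v_n\cdot\nabla$, and checking that independence uniformly for the three operators through the family $\{\phi_\kappa\}$ (the falling factorials in the $d/dx$ case). Everything afterwards is the standard structural fact about homogeneous forms killed by derivations plus bookkeeping with the definition of the indicial polynomial. As a by‑product one recovers the equality claimed in Theorem~\ref{the:main3}: if $m>n$, then taking $V$ to be the top vertex of a side $E_\nu(Q)$ with $\nu>\max\supp r(x)$ (which exists unless $r(x)$ is already a solution of $P=0$, by Corollary~\ref{le:existencia_lado}), the same computation with the $n+1$ independent vectors $v_1,\dots,v_{n+1}$ would force $F\equiv0$, contradicting $Q_V\neq0$; alternatively one may invoke Theorem~\ref{the:main1}.
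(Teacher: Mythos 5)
Your proof is correct, but it follows a genuinely different route from the paper's. The paper proves the formula by induction on the height $r$: the case $r=1$ uses Lemma~\ref{le:sum-zero} with $\rho=0$ directly; for $r\geq 2$ it first shows $Q_{(\alpha,(0,\ldots,0,r))}\neq 0$ by a maximality argument in reverse lexicographic order, then invokes Lemma~\ref{le:multiple-root} to pass to $\overline{P}=\frac{\partial P}{\partial y_n}$ and $\overline{V}=(\alpha,r-1)$, applies the induction hypothesis there, and reassembles $\Psi_{(Q;V)}$ by multiplying the relations \eqref{eq:main} by $T^{\langle\rho\rangle}$ and summing over $|\rho|=r-1$. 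You instead argue non-inductively: you encode the vertex data in the homogeneous form $F$, observe that the relations \eqref{eq:main} say exactly that $F$ is annihilated by the $n$ directional derivatives along $v_j=(\phi_0(\dmu{\mu_j}),\ldots,\phi_n(\dmu{\mu_j}))$, prove these vectors independent by the same generalized-Vandermonde fact the paper uses elsewhere, conclude $F=c\,\ell^{\,r}$ for the linear form $\ell$ vanishing on their span, and identify $\ell$ via $B(T)=\gamma\prod_{j=1}^n(T-\dmu{\mu_j})$, using $\Psi_{(Q;V)}(T)=F(\phi_0(T),\ldots,\phi_n(T))$. Both arguments have Lemma~\ref{le:sum-zero} as their only substantive input; yours buys conceptual economy (no induction, no Lemma~\ref{le:multiple-root}, no reverse-lexicographic step, and the $m>n$ degeneration comes for free), while the paper's route reuses Lemma~\ref{le:multiple-root}, which it needs anyway in Proposition~\ref{pro:stabilization}, and stays at the level of explicit coefficient identities. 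Two small points worth making explicit if you write this up: $\gamma\neq 0$ because $\ell\neq 0$ and the $\phi_\kappa$ are linearly independent, so $B\neq 0$; and the endpoints of a side $E_\nu(Q)$ are extreme points of $\mathcal{N}(Q)$, hence lie in $\mathcal{C}(Q)$, which is what lets you apply the first part to both of them in the final step. Your handling of the three operator cases through the family $\phi_\kappa$ and of $\dmu{\nu}\neq\dmu{\mu_j}$ (using $|q|\neq 1$) is correct.
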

\begin{proof}
  We proceed by induction on $r$. The case $r=0$ is trivial because
  $\Psi_{(Q;V)}(T)=Q_{(\alpha,(0,\ldots,0))}$ is a non-zero constant
  under our hypothesis $Q_V\neq 0$.

  We need to prove the case
  $r=1$ because our induction argument requires $r\geq 2$.

  Assume $r=1$.  The indicial polynomial of $Q$
  at $V=(\alpha,1)$ is, by definition,
\begin{displaymath}
  \Psi_{(Q;V)}(T)=\sum_{k=0}^{n} Q_{(\alpha, e_{\kappa})}\,
  T^{\langle e_\kappa\rangle}.
\end{displaymath}
Applying Lemma~\ref{le:sum-zero} to $\rho=(0,\ldots,0)$ we get
$\Psi_{(Q;V)}(\dmu{\mu_j})=0$
for $j=1,\ldots,n$ (i.e. each $\dmu{\mu_{j}}$ is a root of $\Psi_{(Q;V)}(T)$). Since the
polynomial $\Psi_{(Q;V)}(T)$ has degree at most $n$, we must have:
\begin{displaymath}
  \Psi_{(Q;V)}(T)=\sum_{k=0}^{n}
  Q_{(\alpha,e_{\kappa})} \, T^{\langle e_\kappa\rangle} =
   Q_{(\alpha,e_{n})}\, \prod_{j=1}^{n}(T-\dmu{\mu_j}),
 \end{displaymath}
 and we need to show that $Q_{(\alpha,e_n)}\neq 0$. If it were not so,
 then $\Psi_{(Q;V)}(T)=0$ and all
 the
 coefficients $Q_{(\alpha,e_{\kappa})}$ should be zero for $\kappa=0,\ldots,n $, which
 contradicts the fact that $Q_V\neq 0$. This finishes case $r=1$.


 Assume the result holds for $r-1$ and consider the general case.  Let $V=(\alpha,r)$,
 with $r\geq 2$ and all other notations as before.  For any
 $\rho\in \mathbb{Z}_{\geq 0} ^{n+1}$ with $|\rho|=r-1$, Equation~\eqref{eq:main} for
 $j=1,\ldots,n$ implies the following equality of polynomials in the
 variable $\mu$ of degree at most $n$:
\begin{equation}\label{eq:prop-ind-r-0}
 \sum_{\kappa=0}^n (\rho_{\kappa}+1) \, T^{\langle e_\kappa\rangle} \,
 Q_{(\alpha, \rho+e_{\kappa})} =
 (\rho_n+1)\,Q_{(\alpha,\rho+e_n)}\,\prod_{j=1}^{n}(T-\dmu{\mu_j}),
\end{equation}
because
$\dmu{\mu_j}$, for $j=1,\ldots,n$, are different  roots of the left hand side.

We are going to prove that $Q_{(\alpha,(0,\ldots,0,r))}\neq 0$ and
apply Lemma~\ref{le:multiple-root} to use the induction hypothesis.
Arguing by contradiction  assume  $Q_{(\alpha,(0,\ldots,0,r))}=0$. Take
$\gamma\in\mathbb{Z}_{\geq 0} ^{n+1}$ with $|\gamma|=r$ maximal with respect the
reverse lexicographical order such that
$Q_{(\alpha,\gamma)}\neq 0$. This $\gamma$ exists because
$Q_V\neq 0$, and $\gamma\neq (0,0,\ldots,r)$ by the assumption.
Let $\rho=\gamma-e_\kappa$ for some $\kappa<n$ with $\gamma_{\kappa}>0$,
which exists too because $\gamma_n<r$.
Equation~\eqref{eq:prop-ind-r-0} applied to this $\rho$ implies that
if $Q_{(\alpha,\rho+e_n)}=0$, then the left hand side
of~\eqref{eq:prop-ind-r-0} is the null polynomial so that
 $Q_{(\alpha,\gamma)}=0$ because
$\gamma=\rho+e_{\kappa}$.
Hence
$Q_{(\alpha,\rho+e_n)}\neq 0$. This contradicts the definition of
$\gamma$. Thus our assumption is false and 
$Q_{(\alpha,(0,\ldots,0,r))}\neq 0$.

Equation~\eqref{eq:prop-ind-r-0} is important because it will allow us
to show that $\Psi_{(Q;V)}(T)$ is a polynomial divisible by 
$\prod_{j=1}^{n}(T-\dmu{\mu_j})$ from which we shall be able to apply the
induction hypothesis.

Multiplying both sides of
Equation~\eqref{eq:prop-ind-r-0} by
$T^{\langle\rho\rangle}$,
as
$T^{\langle \rho\rangle} T^{\langle e_\kappa\rangle}=
T^{\langle \rho+e_\kappa\rangle}$,
we get
\begin{equation}\label{eq:prop_ind_r_w(rho)}
   \sum_{\kappa=0}^n (\rho_{\kappa}+1) \, T^{\langle\rho+e_\kappa\rangle} \,
 Q_{(\alpha, \rho+e_{\kappa})} =
 (\rho_n+1)\,Q_{(\alpha,\rho+e_n)}\,
 T^{\langle\rho\rangle}\,\prod_{j=1}^{n}(T-\dmu{\mu_j}).
\end{equation}
The sum of all the left hand sides of~\eqref{eq:prop_ind_r_w(rho)}
for $\rho$ with $|\rho|=r-1$ gives:
\begin{displaymath}
  \sum_{|\rho|=r-1}\sum_{\kappa=0}^n (\rho_{\kappa}+1) \,
  T^{\langle\rho+e_\kappa\rangle} \,
    Q_{(\alpha, \rho+e_{\kappa})}
=
r\,\sum_{|\gamma|=r}T^{\langle \gamma\rangle}\,Q_{(\alpha,\gamma)}=r\,\Psi_{(Q;V)}(T),
\end{displaymath}
where the second equality is the definition of the indicial
polynomial, and the first one follows from collapsing the preimages of
the map $(\rho,\kappa)\mapsto \gamma=\rho+e_\kappa$. As a
consequence, summing all the corresponding right hand sides
of~\eqref{eq:prop_ind_r_w(rho)} we get 
\begin{equation}\label{eq:prop-ind-r-2}
  r\, \Psi_{(Q;V)}(T)=\prod_{j=1}^{n}(T-\dmu{\mu_j}) \sum_{|\rho|=r-1}
  (\rho_n+1)\,Q_{(\alpha,\rho+e_n)}\,T^{\langle\rho\rangle}.
\end{equation}
We are now going to apply the induction hypothesis in order to prove that the summation on
the right hand side is, up to a non-constant factor, an $r-1$ power of
$\prod_{j=1}^{n}(T-\dmu{\mu_j})$. To this end, recall that
$Q_{(\alpha,(0,0,\ldots,r))}\neq 0$ and consider
$\overline{P}=\frac{\partial P}{\partial y_n}$. By the chain rule,
$\overline{Q}=\overline{P}[r(x)]=\frac{\partial Q}{\partial y_n}$, and in particular,
$\overline{Q}_{(\alpha,(0,0,\ldots,r-1)}=r\,Q_{(\alpha,(0,0,\ldots,r))}\neq
0$. Lemma \ref{le:multiple-root}
guarantees that $r(x)$ is an admissible generalized polynomial for
$\overline{P}$.
Let $\overline{V}=(\alpha, r-1)$.
We know that then $\overline{Q}_{\overline{V}}\neq 0$, and by
Lemma~\ref{le:multiple-root} again,
$\overline{V}$ belongs to an element $E_{\nu}(\overline{Q})$ of
co-slope $\nu$.
These properties allow us to apply the induction hypothesis to
$\overline{P}$, $r(x)$ and $\overline{V}$ which means that
the indicial polynomial $\Psi_{(\overline{Q},\overline{V})}(T)$ 
is, up to a non-zero constant, an
$r-1$-th power of $\prod_{j=1}^{n}(T-\dmu{\mu_j})$. Thus, on one hand, by
definition and because $\overline{Q}=\frac{\partial Q}{\partial y_n}$
we have 
\begin{equation}\label{eq:ind1}
  \Psi_{(\overline{Q},\overline{V})}(T)=
  \sum_{|\rho|=r-1}\overline{Q}_{(\alpha,\rho)}\,T^{\langle \rho\rangle }=
  \sum_{|\rho|=r-1}(\rho_n+1)\,Q_{(\alpha,\rho+e_n)}\,T^{\langle \rho\rangle },
\end{equation}
and, on the other, by the induction hypothesis, 
there is a non-zero constant $c$ with
\begin{equation}\label{eq:ind2}
  \Psi_{(\overline{Q},\overline{V})}(T) = 
c \,\bigg(\prod_{j=1}^{n}(T-\dmu{\mu_j})\bigg)^{r-1}.
\end{equation}
The proof of the first statement in the Proposition finishes by
connecting \eqref{eq:ind1} and 
\eqref{eq:ind2}, and inserting the result into \eqref{eq:prop-ind-r-2}.

It only remains to prove that the characteristic polynomial $\Phi_{(Q,\nu)}(C)$
has at least one non-zero root. By definition:
\begin{displaymath}
  \Phi_{(Q,\nu)}(C)=\sum_{W\in
    E_{\nu}(Q)}\Psi_{(Q;W)}(\dmu{\nu})\,C^{\height(W)}.
  \end{displaymath}
We have already proved that for all $W\in E_{\nu}(Q)\cap \mathcal{C}(Q)$,
$\Psi_{(Q;W)}(\dmu{\nu})\neq 0$, because $\nu>\mu_j$, for all $j=1,\ldots,n$.
Since $E_{\nu}(Q)$ is a side, it has at least two vertices, whence
$\Psi_{(Q,\nu)}(C)$ has at least two non-zero monomials,
and thus it has at least one non-zero root. \end{proof}

\begin{proof}[Proof of Theorem \ref{the:main3}] 
Let $Q=P[r(x)]$. Consider the following procedure, analogue to the
Newton-Puiseux construction for algebraic curves: 

\begin{procedure} \textsc{Completion of $r(x)$ to a solution of $P=0$}.
  \begin{enumerate}          
  \item[Setup.] \textsc{Set} $r_0(x)=r(x)$, $Q_1=Q$,
    $\eta_0=\max(\supp (r(x))$, and $i=1$.
  \item[1.] \textsc{If} $y=0$ is a solution of $Q_{i}=0$ \textsc{Then Return}
    $s(x)=r_{i-1}(x)$ \textsc{Else}
  \item[2.] \textsc{Choose} a side of $\mathcal{N}(Q_{i})$ with co-slope
    $\eta_i>\eta_{i-1}$ and 
  \item[3.] \hspace{3ex} a non-zero root $d_i$ of the characteristic
    polynomial $\Phi_{(Q_{i},\eta_i)}(C)$.
  \item[4.] \textsc{Set} 
     $r_i(x)=r_{i-1}(x)+d_i\,x^{\eta_i}$ and   
      $Q_{i+1}=Q_{i}[d_i\,x^{\eta_i}]$.
    \item[5.] \textsc{Set} $i=i+1$ and \textsc{Goto} 1.
    \item[6.]  \textsc{Return} 
    $s(x)=r(x)+\sum_{i=1}^{\infty}d_i\,x^{\eta_i}$.
  \end{enumerate} 
\end{procedure}  

 We shall show that in each iteration, the main loop of  
the above procedure   
can be performed, that the output $s(x)$ belongs to $\Omega$, and it
is a solution of the equation~$P=0$. From this follows,
by Theorem~\ref{the:main1}, that $m\leq n$, hence $m=n$.

Let us prove that each step can be performed.
Assume $r_{i-1}(x)$, $Q_i$, $\eta_{i-1}$ are defined for some $i\geq
1$ and $r_{i-1}(x)$ is admissible for $P=0$. 
If the condition of line 1 holds the procedure finished. Otherwise, as
$r_{i-1}(x)$ is admissible for $P=0$ there exists
$\eta_{i}>\eta_{i-1}$ by Corollary~\ref{le:existencia_lado} such that
$\mathcal{N}(Q_i)$ has a side of co-slope $\eta_i$. This allows
line 2  of Procedure 1 to be performed.
Proposition~\ref{prop:main} guarantees the existence of a
non-zero root $d_i$ of the characteristic 
polynomial~$\Phi_{(Q_{i},\eta_i)}(C)$ which gives line 3.

We have proved that the above method either returns a generalized
polynomial $r_{i-1}(x)$ which is a solution of $P=0$ or, after
infinitely many steps, it returns a formal power series
$s(x)=r(x)+\sum_{i=1}^{\infty}d_i\,x^{\eta_i}$.  
From the definition of $\eta_0$ and the fact that $\eta_0<\eta_j$,
it is obvious that $r(x)$ is a truncation of $s(x)$
but it remains to prove that  $s(x)\in \Omega$ and that
$s(x)$ is a solution of $P=0$.
These facts follow from the following Proposition, which can be found
in the literature under slightly different hypotheses:
In \cite{Grigoriev-Singer}
the equation $P=0$ is differential and has  coefficients in the field
$\mathbb{C}((x^\mathbb{Z}))$, in \cite{CanoJ} the series $s(x)$ has rational
exponents, and in \cite{Cano-Fortuny-q-diff-2022} $P=0$ is a
$q$-difference equation whose coefficients are grid-based series. 
The arguments presented in those proofs
are equally valid in our setting with minimal modification.
For the convenience of the reader we sketch the proof of the specific
statement we need, thus ending the proof of Theorem~\ref{the:main3}.

\end{proof}

\begin{proposition}\label{pro:stabilization}  
  Let $P\in\Omega[y_0,y_1,\ldots,y_n]$ and 
  $s(x)=\sum_{i=0}^{\infty}c_i\,x^{\nu_i}$ be a series with
  $\nu_0<\nu_1<\dots$ such that $s(x)$
  satisfies the necessary initial conditions~\eqref{eq:4} for any
  $i\geq 0$.  Then $s(x)\in \Omega$ and it is solution of the
  equation $P=0$.
\end{proposition}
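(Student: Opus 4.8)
The plan is to run the Newton--Puiseux process along $s(x)$ and control it with the lemmas of this section; the argument is the classical one of \cite{Grigoriev-Singer,CanoJ,Cano-Fortuny-q-diff-2022}, which carries over to our context with only notational changes. First I would set $r_i(x)=\sum_{j=0}^{i}c_j\,x^{\nu_j}$, $P_0=P$ and $P_{i+1}=P_i[c_i\,x^{\nu_i}]=P[r_i(x)]$, assuming all $c_i\neq0$ (so the set of exponents is infinite) and that no $r_i(x)$ already solves $P=0$ (otherwise $s(x)=r_i(x)$ is a generalized polynomial solution, trivially in $\Omega$). Hypothesis~\eqref{eq:4} says that every truncation $r_i(x)$ is admissible for $P$, so by Lemma~\ref{le:caracterizacion_admisible_polynomial} one has $\height(\Bot(E_{\nu_i}(P_{i+1})))\geq1$ for all $i$; and $\height(\Top(E_{\nu_i}(P_i)))\geq1$ too, since an element $E_{\nu_i}(P_i)$ of height $0$ would make $\Phi_{(P_i;\nu_i)}$ a non-zero constant, contradicting $\Phi_{(P_i;\nu_i)}(c_i)=0$. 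Applying part~(4) of Lemma~\ref{le:behaviourNewtonChangVar} with $P\rightsquigarrow P_i$, $Q\rightsquigarrow P_{i+1}$, $\nu=\nu_i$, $\mu=\nu_{i+1}$ gives
\[
 \height(\Top(E_{\nu_i}(P_i)))\ \geq\ \height(\Bot(E_{\nu_i}(P_{i+1})))\ \geq\ \height(\Top(E_{\nu_{i+1}}(P_{i+1})))\,,
\]
so the two interleaved sequences $i\mapsto\height(\Top(E_{\nu_i}(P_i)))$ and $i\mapsto\height(\Bot(E_{\nu_i}(P_{i+1})))$ are non-increasing sequences of positive integers, hence eventually constant, equal to some $h\geq1$ for $i\geq i_0$. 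Then, by part~(2) of Lemma~\ref{le:behaviourNewtonChangVar}, for $i\geq i_0$ all the polygons $\mathcal{N}(P_i)$ coincide above height $h$ and carry there the same border coefficients; in particular they share a \emph{fixed} vertex $(\beta^{*},h)$, the unique border point at height $h$, with $\Top(E_{\nu_i}(P_i))=(\beta^{*},h)$, and $E_{\nu_i}(P_{i+1})$ collapses to it. Moreover the supporting line $L_{\nu_i}(P_i)$ passes through $(\beta^{*},h)$, so by part~(1) of the same Lemma every point of $\mathcal{C}(P_{i+1})$ of height $r$ has abscissa at least $\beta^{*}+(h-r)\,\nu_i$.

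The hard part is to show that $\nu_i\to\infty$, i.e.\ that $s(x)\in\Omega$. This is the ``stabilisation'' step of the Newton process: once the polygon has stabilised above the vertex $(\beta^{*},h)$ as above, the strictly increasing co-slopes $\nu_i$ cannot accumulate at a finite value. The mechanism is to examine the bounded part of $\mathcal{N}(P_{i+1})$ below $(\beta^{*},h)$ — which by Corollary~\ref{le:existencia_lado} still contains a genuine side at each step, since $r_i(x)$ is not a solution — and to track, via the half-plane estimate above, how it and $\ord_x P(r_i(x))$ evolve under $P_{i+1}\rightsquigarrow P_{i+2}$, in which \eqref{eq:4} forces the cancellation $\Phi_{(P_{i+1};\nu_{i+1})}(c_{i+1})=0$ of the lowest-order term of $P(r_{i+1}(x))$; since $\supp P(r_i(x))$ meets every half-line $(-\infty,B]$ in a finite set, and at each step its minimum is removed while the terms reinjected are controlled by the fixed part of the polygon (at heights $\geq h$), a finiteness argument forces both $\ord_x P(r_i(x))$ and the $\nu_i$ to be unbounded (when $h>1$ one first reduces to $h=1$ by differentiating $P$ with respect to a suitable $y_\kappa$, using Lemma~\ref{le:multiple-root}). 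The complete argument — given in \cite{Grigoriev-Singer} for $\Gamma=\mathbb{Z}$ in the differential case, in \cite{CanoJ} for solutions with rational exponents, and in \cite{Cano-Fortuny-q-diff-2022} for $q$-difference equations with grid-based coefficients — applies here word for word, so I would simply invoke it. It yields $s(x)=\sum_i c_i\,x^{\nu_i}\in\Omega$.

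It then remains to check $P(s(x))=0$, which is routine once $s(x)\in\Omega$. Fix $i\geq i_0$ and substitute $y_\kappa=s^{(\kappa)}(x)-r_i^{(\kappa)}(x)$, which has order $\geq\nu_{i+1}-\shift\,\kappa$, into $P_{i+1}=P[r_i(x)]=\sum_\rho (P_{i+1})_\rho\,y_0^{\rho_0}\cdots y_n^{\rho_n}$; since $r_i(x)+\bigl(s(x)-r_i(x)\bigr)=s(x)$, this gives
\[
  P(s(x))=\sum_{\rho}(P_{i+1})_\rho\prod_{\kappa=0}^{n}\bigl(s^{(\kappa)}(x)-r_i^{(\kappa)}(x)\bigr)^{\rho_\kappa}\,.
\]
The term $\rho=(0,\dots,0)$ equals $P(r_i(x))$, whose order $o_i:=\ord_x P(r_i(x))$ satisfies $o_i\geq\beta^{*}+h\,\nu_i$ because $(o_i,0)\in\mathcal{C}(P_{i+1})$ and lies in the right half-plane of $L_{\nu_i}(P_i)$. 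For $\rho\neq0$ with $(P_{i+1})_\rho\neq0$, the same estimate gives $\ord_x(P_{i+1})_\rho\geq\beta^{*}+(h-|\rho|)\nu_i+\shift\,\omega(\rho)$, and since $\shift\,\omega(\rho)+\sum_{\kappa}\rho_\kappa(\nu_{i+1}-\shift\,\kappa)=|\rho|\,\nu_{i+1}$, the corresponding term of $P(s(x))$ has order at least
\[
  \beta^{*}+(h-|\rho|)\nu_i+|\rho|\,\nu_{i+1}\ =\ \beta^{*}+h\,\nu_i+|\rho|\,(\nu_{i+1}-\nu_i)\ \geq\ \beta^{*}+h\,\nu_i\,.
\]
Hence $\ord_x P(s(x))\geq\beta^{*}+h\,\nu_i$ for every $i\geq i_0$, and since $h\geq1$ and $\nu_i\to\infty$ this order is $+\infty$; thus $P(s(x))=0$. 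This finishes the proof of the Proposition — and, with it, of Theorem~\ref{the:main3}.
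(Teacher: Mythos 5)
Your set-up (the stabilisation of the heights $\height(\Top(E_{\nu_i}(P_i)))$ via Lemma~\ref{le:caracterizacion_admisible_polynomial} and part (4) of Lemma~\ref{le:behaviourNewtonChangVar}, giving a fixed pivot vertex through which every supporting line $L_{\nu_i}(P_i)$ passes) agrees with the paper, and your closing computation that $\ord_x P(s(x))\geq\beta^{*}+h\,\nu_i$ once $s(x)\in\Omega$ is correct and in fact makes explicit a step the paper leaves implicit ("$\lim\alpha_k=\infty$ makes $s(x)$ a solution"). The gap is in the central claim, $\nu_i\to\infty$. You do not prove it: you describe a mechanism ("at each step the minimum of $\supp P(r_i(x))$ is removed while the reinjected terms are controlled by the fixed part of the polygon, so a finiteness argument forces $\ord_x P(r_i(x))$ and the $\nu_i$ to be unbounded") and then invoke the cited proofs "word for word". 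That mechanism, as stated, does not rule out the dangerous scenario: if the $\nu_i$ accumulated at a finite value $\nu^{*}$, the lower bound $\ord_x P(r_i(x))\geq\beta^{*}+h\,\nu_i$ stays bounded and nothing in your description prevents the successive orders (and the new co-slopes they generate, together with the finitely many roots of the indicial polynomial at the pivot) from creeping up by ever smaller amounts, because the exponents now live in a set generated by the arbitrary real support of the coefficients and the varying $\nu_i$'s — exactly the quantities one is trying to control. Nor can you simply cite \cite{Grigoriev-Singer}, \cite{CanoJ} or \cite{Cano-Fortuny-q-diff-2022} verbatim: those proofs assume integer or rational exponents, or grid-based coefficients, and the whole point of the Proposition in this paper is that $P$ has coefficients in $\Omega$, whose supports need not be contained in any finitely generated semigroup.

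The missing ideas, which constitute the paper's actual proof, are: (a) reduce to pivot height $1$ by Lemma~\ref{le:multiple-root} and to quasi-solved form, so that the indicial polynomial $\Psi_{(P;V)}(T)$ at the pivot $V=(0,1)$ is the same for all $P_i$ (part (2) of Lemma~\ref{le:behaviourNewtonChangVar}) and its set of "resonant" co-slopes $\Sigma=\{\mu\geq 0:\Psi_{(P;V)}(\dmu{\mu})=0\}$ is finite; (b) in the grid-based case $\supp P\subset G$ with $G$ finitely generated, show inductively that each $\nu_i$ is either the abscissa of the bottom vertex of $E_{\nu_i}(P_i)$ (hence in $\supp P_i\subset G'$) or lies in $\Sigma$, so $\supp s(x)$ is contained in the finitely generated semigroup $G'=\langle G\cup\Sigma\rangle$, which has finitely many elements below any bound, forcing $\nu_i\to\infty$; and (c) for general coefficients in $\Omega$, truncate the coefficients of $P$ at each order $N$: the truncated equation $R$ has finite support, the necessary initial conditions for $P$ and $R$ coincide as long as $\nu_i\leq N$, and (b) applied to $R$ shows $\supp s(x)\cap\mathbb{R}_{\leq N}$ is finite for every $N$, hence $s(x)\in\Omega$. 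Without (a)--(c), or some substitute for them, your argument establishes the easy implications but not the statement itself.
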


\begin{proof}
First, we are going to define a special point which we call the \textit{pivot point} 
of $P$ along the series
$\sum_{i=0}^{\infty}c_i\,x^{\nu_i}$ and will be central to our
arguments.

As previously, we denote $P_0=P$ and $P_{i+1}=P_i[c_i\,x^{\nu_i}]$. 
By
Lemma~\ref{le:behaviourNewtonChangVar} we know that
$\height \Top(E_{\nu_i}(P_i)) \geq \height
\Top(E_{\nu_{i+1}}(P_{i+1}))\geq 1$, for any $i\geq 0$. Hence there exists an index $i_0$
such that this sequence of heights stabilizes, and therefore
the points $\Top(E_{\nu_i}(P_i))$ stabilize too: there is a point
$V=(\alpha,r)$ such that
$\Top(E_{\nu_{i_0}}(P_{i_0})= \Top(E_{\nu_i}(P_i))=V $ for all $i\geq
i_0$. This $V$ is the \emph{pivot point} of $P$ along~$s(x)$.

We know that $r\geq 1$ by Lemma~\ref{le:behaviourNewtonChangVar}.
Notice that for any $k\geq i_0$, setting
$\alpha_k=\ord_x P(\sum_{i=0}^{k-1}c_i\,x^{\nu_i})$ one
has $\alpha_k\geq \alpha+r\,\nu_k$
because the point $(\alpha_k,0)\in \mathcal{C}(P_k)$ and the
supporting line
$L_{\nu_k}(P_k)$ passes through $V$.  Hence, in order to prove the
Proposition it is
enough to show that $s(x)\in \Omega$, 
that is $\lim \nu_i=\infty$, because then 
automatically $\lim \alpha_k=\infty$ which makes $s(x)$ a solution of~$P=0$.

We may assume that $r=1$, because if $r\geq 2$ we can apply
$r-1$ times
Lemma~\ref{le:multiple-root}
 to find some derivative
$P'=\frac{\partial^{r-1} P}{\partial y_0^{j_0}\dots\partial y_n^{j_n}}$ of $P$, such
that, the height of the pivot point of $P'$ along $s(x)$ is one.
Thus from now on we assume that $r=1$.

We may also assume that $\alpha=0$ so that $V=(0,1)$.
Let us sketch why (a detailed argument can be found
in~\cite{Cano-Fortuny-q-diff-2022}). 
Performing the change of variable $\bar{y}=x^{\nu_{i_0}}\,y$
on $P=0$,
and multiplying
the resulting equation by $x^{-\beta}$, with $\beta=\alpha+\nu_{i_0}$,
we obtain a new equation $\overline{P}=0$ whose
pivot point  along $\bar{s}(x)=x^{-\nu_{i_0}}\,s(x)$
is  $(0,1)$ and such that the coefficients of
$\overline{P}_{i_0}$ are elements of $\Omega$ with
non-negative order in $x$.
Using $\overline{P}_{i_0}$ instead of $P$, 
we may assume that $P=0$ is in \emph{quasi-solved form}:
$\mathcal{N}(P)$ is contained in the first quadrant,
and in particular
the coefficients of $P$ have
non-negative order in $x$,
and
$V=(0,1)$ is a vertex of $\mathcal{N}(P)$, and
$\nu_0\geq 0$.

We first prove the Proposition assuming that $P$ is in quasi-solved form
and with the additional hypothesis that its coefficients  are
grid-based, that is: there exists a finitely generated semigroup $G$
of $\mathbb{R}_{\geq 0}$ such that $\supp P\subset G$.
In this case we can prove that $s(x)$ is also grid-based.
The general
case will follow from this one.

By
Lemma~\ref{le:behaviourNewtonChangVar}, the point $V=(0,1)$ is a vertex
of $\mathcal{N}(P_i)$ for all $i\geq 0$. Also by
Lemma~\ref{le:behaviourNewtonChangVar},  the 
equality $P_{(0,e_\kappa)}=({P_i})_{(0,e_\kappa)}$ holds for 
all $i\geq 0$ and $\kappa=0,1,\ldots,n$. 
Hence the indicial polynomials 
$\Psi_{(P;V)}(T)$ and $\Psi_{(P_i;V)}(T)$ are equal for all 
$i\geq 0$. 
Let $\Sigma=\{\mu\in \mathbb{R}_{\geq 0}\mid 
\Psi_{(P;V)}(\dmu{\mu})=0\}$.
This set is finite because the set of roots of the polynomial
$\Psi_{(P;V)}(T)=\sum_{\kappa=0}^{n}P_{(0,e_\kappa)}\,T^{\kappa}$
is finite (it is not identically null since $(0,1)$ is a vertex),
and $|q|\neq 1$ in the case of $q$-difference equations.
Let $G'$ be the semigroup generated by $G$ and $\Sigma$,
which is finitely generated.

Let us prove inductively that  $\supp P_i\subset G'$ and
that $\nu_i\in G'$. The case $i=0$ holds by hypothesis.
Let  $i\geq 0$ and assume that $\supp P_i\subset G'$; let
us first prove that $\nu_i\in G'$. 
By hypothesis, $\Phi_{(P_i;\nu_i)}(c_i)=0$.
If the element $E_{\nu_i}(P_i)$ is a
side then its vertices are $(0,1)$ and $(\alpha_i,0)$, so that
$\nu_i=\alpha_i\in \supp P_{i}\subset G'$.
Otherwise $E_{\nu_i}(P_i)$ is the vertex $\{V\}$, which implies that
$\Phi_{(P_i;\nu_i)}(c_i)=\Psi_{(P_i,V)}(\dmu{\nu_i})c_i$,
so that $\nu_i\in \Sigma$.
In both cases, $\nu_i\in G'$.
Since $P_{i+1}=P_i[c_i\,x^{\nu_i}]$ then
$\supp P_{i+1}\subset G'$.
This shows that $\supp s(x)\subset G'$ which implies that $\lim
\nu_i=\infty$ as desired.

It only remains to prove the Proposition
when $P$ has coefficients  in $\Omega$, not necessarily grid-based. 
Take
$N>0$ and denote by
$R$ the polynomial obtained by truncating in the variable $x$ the coefficients of
$P$ up to orden $N$.
Define as usually $R_0=R$ and $R_{i+1}=R_i[c_i\,x^{\nu_i}]$
for $i\geq 0$. Since  the coefficients of $P$ have non-negative order in $x$ and
$\nu_i\geq 0$, then the truncation up to order $N$ 
 of $R_i$ and that of $P_i$ coincide for all $i\geq 0$. 
In particular, if $\nu_k\leq N$ then, $\Phi_{(R_i,\nu_i)}(c_i)=0$ for
$0\leq i\leq k$.  
Since $\supp(R)$ is finite, it generates a finitely generated
semigroup $G$ of $\mathbb{R}_{\geq 0}$. As before, we may prove by
induction that $\nu_i\in G'$ for $0\leq i\leq k$.  
Since the set 
$G'\cap \mathbb{R}_{\leq  N}$ finite, then the set 
$\supp s(x)\cap \mathbb{R}_{\leq   N}$ is finite
for any $N$ and therefore either
$\lim \nu_i=\infty$ or  $\supp s(x)$ is finite; in both cases
$s(x)\in\Omega$  and we are done.
\end{proof}

\subsection{On the completion of admissible polynomials to solutions}
Grigoriev and Singer in \cite{Grigoriev-Singer} provide, in the case
of differential equations, a criterion
for an admissible polynomial to be the truncation of an actual
solution of $P=0$, based on the following definition. 
As before,  $r(x)=\sum_{i=0}^{k-1}c_i\,x^{\nu_i}$ is an
  admissible generalized polynomial for the equation $P=0$, and 
 we denote $P_0=P$ and $P_{i+1}=P_i[c_i\,x^{\nu_i}]$, for $0\leq i\leq
 k-1$.
 Let $V=\Bot( E_{\nu_{k-1}}(P_{k}) )$ be the bottom vertex
 of $E_{\nu_{k-1}}(P_{k})$.

 \begin{definition}
  We say that \emph{$r(x)$ stabilizes $P$} if the height $\height(V)$
  is~$1$, and $\nu_{k-1}$ is greater than the maximum
  of the finite set $\Sigma=\{\mu\in \mathbb{R}_{\geq 0}\mid 
\Psi_{(P;V)}(\dmu{\mu})=0\}$.
\end{definition} 
In Section 3 of \cite{Grigoriev-Singer} the authors prove the
following \emph{stabilization criterion} (for differential equations):  if
$r(x)$ stabilizes $P$ then it is the truncation of a unique solution
$s(x)$ of $P=0$. For the benefit of the reader we include a proof
which covers also the $q$-difference case.

\begin{proof}[Proof of the stabilization criterion
  of~\cite{Grigoriev-Singer}]
  We need to show the existence and uni\-queness of a solution 
  $s(x)=r(x)+\sum_{i=k}^{\infty}c_i\,x^{\nu_i}\in\Omega$, with
  $\nu_i<\nu_{i+1}$ for all $i$.
  By Proposition \ref{pro:stabilization}, it is enough to
  prove that $r(x)$ can be completed uniquely to an $s(x)$ such
  that $s(x)$ satisfies the necessary  initial
  conditions~\eqref{eq:4}.

  Notice that whenever $\nu_k>\nu_{k-1}$ and $c_k$ satisfy the necessary initial
  condition $\Phi_{(P_k;\nu_k)}(c_k)=0$
  then the generalized polynomial $r(x)+c_k\,x^{\nu_k}$ stabilizes
  $P$. Thus, by recurrence, we only need to prove that:  either (I) $P(r(x))=0$
  and the condition
  $\Phi_{(P_k;\nu_k)}(c_k)=0$ does not hold for any  $\nu_k>\nu_{k-1}$
  and $c_k\neq 0$; or (II) $P(r(x))\neq 0$  and there only exist one $\nu_k>\nu_{k-1}$
  and one $c_k\neq 0$ with $\Phi_{(P_k;\nu_k)}(c_k)=0$.



  Let $\nu_{k}>\nu_{k-1}$. Because $\height V=1$, 
  there are only three cases for the element $E_{\nu_k}(P_k)$:
  (a) $E_{\nu_k}(P_k)$ is a vertex of $\mathcal{N}(P_k)$ with ordinate zero;
  (b) $E_{\nu_k}(P_k)$ is the vertex $V$;
  (c) $E_{\nu_k}(P_k)$ is the side joining  $V$ and the vertex of
  $\mathcal{N}(P_k)$ on the horizontal axis, say
  $(\beta,0)$. 

  In case (a), $\Phi_{(P_k;\nu_k)}(C)$ is a non-zero
  constant polynomial, so that it has no roots. In case (b),  
  $\Phi_{(P_k;\nu_k)}(C)=\Psi_{(P_k,V)}(\dmu{\nu_k})\,C$  whose only root is
  $C=0$ as $\Psi_{(P_k,V)}(\dmu{\nu_k})=\Psi_{(P_{k-1},V)}(\dmu{\nu_k})\neq 0$
  because $r(x)$ stabilizes $P$.  
  In case (c) we have
  \begin{displaymath}
    \Phi_{(P_k;\nu_k)}(C)=\Psi_{(P_k,V)}(\dmu{\nu_k})\,C+{P_k}_{(\beta,(0,\ldots,0))},
  \end{displaymath}
both of whose coefficients are non-zero by assumption so that it has a
single root. 

If $P(r(x))=0$ then we are necessarily in case (b)
because  $\mathcal{C}(P_k)$ has no points with ordinate zero, 
whence (I) holds.

Assume that $P(r(x))=d\,x^{\gamma}+\cdots$ with $d\neq 0$,
hence $(\gamma,0)$ is a vertex of $\mathcal{N}(P_k)$.
As $V=(\alpha,1)$ for some $\alpha$ and
$V\in L_{\nu_{k-1}}(P_k)$, then $\gamma>\alpha+\nu_{k-1}$.
If $\nu_k\neq \gamma-\alpha$ then we are either in case (a) or case
(b), so that  $\Phi_{(P_k;\nu_k)}(C)$ has no non-zero roots.
As $\gamma-\alpha>\nu_{k-1}$,
there only remains the possibility $\nu_k=\gamma-\alpha$, giving case
(c) with $\beta=\gamma$, and 
$\Phi_{(P_k;\nu_k)}(C)$ has a single non-zero root, which is (II).
\end{proof}

The criterion presented in Theorem~\ref{the:main3} is of different
nature that this one: it only depends on the dimension of the
$\mathbb{Q}$-vector space spanned by the 
support of $r(x)$ and the order of the equation,
while the above criterion required knowing the whole equation $P$,
$r(x)$, and the substitution $P_k=P[r(x)]$. 

In Section~\ref{sec:example} we present an example of an
admissible generalized polynomial that
satisfies the criterion  of Theorem~\ref{the:main3} and that can be
completed in two different ways to a solution of the differential
equation so that the stabilization criterion of Grigoriev and Singer
does not apply.

\subsection{Autonomous ordinary differential equations of first order}
\label{subsec:autFirstOrderDiffODEs}
In this subsection we will apply our previous results to
the case of first order autonomous ordinary differential equations.
In  the next Theorem we prove that any solution $s(x)\in \Omega$ of that kind of
equation is a formal Puiseux series (statement (1)). We also provide alternative
proofs of two known results: Theorems 11 and 12 in~\cite{Cano2022137}
(statements (2) and (3)).


Let $P=0$ be a non-trivial an autonomous ordinary differential equation of
first order, that is, a first order differential equation invariant by
the translations $x\mapsto x+c$, $c\in \mathbb{C}$. Hence $P$ can be
written as a non-zero polynomial with constant
coefficients in $y_0$ and $y_1$, where $y_1$ refers to to the derivative
$\frac{d\,}{d\, x}$. 
Let us denote by $\hat{K}^{*}
=\cup_{d\geq  1}\mathbb{C}((x^{\frac{1}{d}\mathbb{Z}}))$ the field of formal
Puiseux power series.

\begin{theorem}\label{th:AutFirstOrderODEs}
  Let $P(y_0,y_1)=0$ be non-trivial an autonomous first order ordinary differential
  equation. Then
  \begin{enumerate}
  \item If $s(x)\in \Omega$ be a solution of the equation
    $P=0$, then $s(x)\in \hat{K}^{*}$. 
  \item If $s(x)\in \hat{K}^{*}$ is a solution of $P=0$, then $s(x)$
    is a convergent Puiseux power series.
  \item For any point $(x_0,c_0)\in \mathbb{C}^{2}$, there exists an
    analytic solution $s(x)$ of $P=0$ passing through the point $(x_0,c_0)$.
  \end{enumerate}
\end{theorem}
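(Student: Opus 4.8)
The plan is to derive each of the three statements from the general machinery already developed, specializing to the autonomous first order case where $n=1$ and $\langle\supp P\rangle=\{0\}$ (constant coefficients). For statement (1), I would invoke Theorem~\ref{the:main1}: since $\supp P$ generates $\{0\}$, any solution $s(x)\in\Omega$ satisfies $\dim_{\mathbb{Q}}\langle\supp s(x)\rangle\leq 1$. This means all exponents of $s(x)$ are rational multiples of a single real number $\theta$. To conclude $s(x)\in\hat K^{*}$, i.e. that the exponents have a common denominator and $\theta$ can be taken rational, I would apply the Newton polygon machinery directly: run Procedure~1 (or rather the classical Newton–Puiseux process) on $P=0$ viewed as a first order equation. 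The key point is that for an autonomous equation, the characteristic and indicial polynomials at each vertex, together with the fact that $\dim_{\mathbb Q}\langle\supp s(x)\rangle\leq 1$, force the co-slopes that actually arise to be rational; alternatively, one can note that the support lies in a $1$-dimensional $\mathbb Q$-vector space $\mathbb{Q}\theta$, and then Theorem~\ref{the:main2} already tells us $s(x)$ converges, which combined with the discreteness of the support in a bounded interval (Proposition~\ref{pro:stabilization}'s grid-based argument) forces the exponents to form a set of the form $\frac1d\mathbb Z_{\geq \nu_0}$ intersected with the support. I expect the cleanest route is: the support generates $\mathbb Q\theta$; after the substitution $x=t^{?}$ is not available since $\theta$ may be irrational, so instead one observes that writing $s(x)=\sum c_i x^{r_i\theta}$ with $r_i\in\mathbb Q$, the equation $P(s,s')=0$ is, after dividing by a power of $x$, an identity among monomials $x^{r\theta}$; matching a single nonzero relation among the lowest-order terms via Lemma~\ref{le:key_lemma_Newton_Polygon} pins down $\theta$ to be rational unless $s$ is constant.

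For statement (2), this is an immediate consequence of Corollary~\ref{co:corTh2FirstOrderConstant}: if $s(x)\in\hat K^{*}$ is a non-constant Puiseux solution, then $\dim_{\mathbb Q}\langle\supp s(x)\rangle=1$ (it cannot be $0$ since $s$ is non-constant, and it is $\leq 1$ by Theorem~\ref{the:main1} with $\langle\supp P\rangle=\{0\}$), hence the support has maximal rational rank and Theorem~\ref{the:main2} applies; the final sentence of Corollary~\ref{co:corTh2FirstOrderConstant} upgrades sectorial convergence to convergence in a full neighbourhood of the origin because the exponents are rational with bounded denominator. If $s$ is constant the statement is trivial.

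For statement (3), I would argue as follows. Fix $(x_0,c_0)\in\mathbb C^2$. If $x_0\neq 0$, the claim is the classical Cauchy existence theorem for analytic ODEs, provided the equation can be solved for $y_1$ near $(x_0,c_0)$; the subtlety is the locus where $\partial P/\partial y_1$ vanishes. Here autonomy is essential: the equation $P(y_0,y_1)=0$ defines an algebraic curve in the $(y_0,y_1)$-plane, and a solution through $(x_0,c_0)$ corresponds to parametrizing a branch of this curve. By the Newton–Puiseux theorem for the algebraic curve $P(c_0+u,\,v)=0$ (thinking of $v=y_1=du/dx$), one obtains a Puiseux expansion $v=\varphi(u)$, and then $du/dx=\varphi(u)$ is a separable first order equation whose solution near $u=0$ is obtained by integrating $du/\varphi(u)$; this yields a Puiseux (hence convergent, by statement (2)) solution, after a translation $x\mapsto x+x_0$. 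When $\varphi(u)$ has a zero or pole at $u=0$ one gets the ramified or logarithmic-type behaviour, but the autonomous structure guarantees the inverse function $x=x(u)$ is single-valued Puiseux in $u$, so inverting back gives $u$ as a Puiseux series in $x-x_0$. The reduction to $x_0=0$ is the content of translating the equation, which is legitimate precisely because $P$ is autonomous.

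The main obstacle I anticipate is statement (1): passing from ``$\supp s(x)$ spans a $1$-dimensional $\mathbb Q$-space'' to ``$s(x)$ is genuinely a Puiseux series'' is not purely a rational-rank statement, since $\mathbb Q\theta$ with $\theta$ irrational is also $1$-dimensional. One must additionally use the structure of the equation — via the Newton polygon and the grid-based argument in the proof of Proposition~\ref{pro:stabilization}, or via the convergence from Theorem~\ref{the:main2} plus the fact that a convergent series with exponents in $\mathbb Q\theta\cap[\nu_0,\nu_0+\epsilon]$ finite forces $\theta\in\mathbb Q$ — to rule out a genuinely irrational common factor. I would present this as the crux of the argument and handle statements (2) and (3) as comparatively short corollaries of the already-established Theorems~\ref{the:main1} and~\ref{the:main2} and classical Newton–Puiseux theory.
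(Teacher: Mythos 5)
Your part (2) is exactly the paper's argument (Corollary~\ref{co:corTh2FirstOrderConstant}), and for part (1) your ``cleanest route'' is indeed the paper's route, but the step that carries all the weight is only asserted, not proved. The paper's proof rests on a concrete structural computation: writing $P=\sum P_{i,j}y_0^iy_1^j$, each nonzero term corresponds bijectively to the point $(-j,i+j)$ of the cloud, so the indicial polynomial at every such point is the monomial $P_{i,j}T^{j}$, which does not vanish at any $\mu\neq 0$; consequently $\Phi_{(P;\mu)}(C)$ can have a nonzero root only when $\mu=0$ or $\mu$ is the co-slope of a \emph{side} of $\mathcal{N}(P)$, hence rational. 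Combined with Lemma~\ref{le:key_lemma_Newton_Polygon} this makes the first exponent $\mu_0$ rational (after first replacing $P$ by $P[c_0]$ if the solution starts with a constant term --- a reduction you omit), then Theorem~\ref{the:main1} forces $\langle\supp s(x)\rangle=\mathbb{Q}$, and the grid-based argument of Proposition~\ref{pro:stabilization} gives bounded denominators. You gesture at all of this, but you never establish the monomial form of the indicial polynomials, which is the only place autonomy is used; and your proposed alternative --- that convergence from Theorem~\ref{the:main2} plus discreteness of the support ``forces $\theta\in\mathbb{Q}$'' --- is false as stated: a convergent series such as $x^{\pi}$ has support in $\mathbb{Q}\pi$ and discrete support, so convergence alone cannot rule out an irrational generator. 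As written, part (1) has a genuine gap.

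For part (3) you take a genuinely different route from the paper: you parametrize a branch of the algebraic curve $P(c_0+u,v)=0$ by Newton--Puiseux, integrate the separable equation $du/\varphi(u)=dx$ and invert. This is essentially the original proof in~\cite{Cano2022137}, whereas the paper deliberately rederives the result from its own machinery: after reducing to $(x_0,c_0)=(0,0)$, either $P_{0,0}=0$ (constant solution) or $\mathcal{N}(P)$ has a side of positive rational co-slope $\mu_0$ whose characteristic polynomial has a nonzero root, so $c\,x^{\mu_0}$ is admissible of maximal rational rank, Theorem~\ref{the:main3} completes it to a solution, and Corollary~\ref{co:corTh2FirstOrderConstant} gives convergence. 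Your sketch can be made to work, but it has unaddressed cases: if $P(c_0,v)$ has no finite root (e.g. $P=y_0y_1+1$ at $c_0=0$) you must use a branch with $v\to\infty$, i.e. $\varphi$ with negative fractional exponents; and in the ``logarithmic'' case $\varphi(u)=cu+\cdots$ the separable method produces no nonconstant solution reaching $u=0$, and the correct conclusion is that $y\equiv c_0$ is itself a solution (since $\varphi(0)=0$ implies $P(c_0,0)=0$) --- your phrase ``logarithmic-type behaviour'' with the claim that autonomy makes $x(u)$ ``single-valued Puiseux'' does not cover this, nor does it justify the inversion step. So: part (2) is correct and identical to the paper; parts (1) and (3) need the concrete computations above before they constitute proofs.
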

\begin{proof} Part (2) is consequence of our Theorem~\ref{the:main2}, see
  Corollary~\ref{co:corTh2FirstOrderConstant}.
  In order to prove parts 
  (1) and (3), let us note some specific properties of the Newton polygon
  of~$P$ in this case.

  Since $P$ is a polynomial with constant coefficients, we can write
  \begin{equation}\label{eq:AutFirstODE}
  P=\sum_{i=0}^{d_0}\sum_{j=0}^{d_1}
  P_{i,j}\,y_0^{i}\,y_{1}^{j},\quad P_{i,j}\in \mathbb{C}.
  \end{equation}
  Then any term
  $P_{i,j}\,y_0^{i}\,y_{1}^{j}$ corresponds to the point $(-j,i+j)$ in
  $\mathcal{C}(P)$, which provides a bijection between the non-zero terms of
  $P$ and $\mathcal{C}(P)$.
  Let $V\in \mathcal{C}(P)$
  be a point lying in some side or vertex  of $\mathcal{N}(P)$. There
  exist unique integers $0\leq i\leq d_1$ and $0\leq j\leq d_2$ such
  that $V=(-j,i+j)$. Then, the indicial polynomial $\Psi_{(P,V)}(T)=P_{i,j}\,T^{j}$, with
  $P_{i,j}\neq 0$ because $V\in \mathcal{C}(P)$.
  We are assuming that $P$ is effectively a differential equation,
  that is, $y_1$ appears in $P$, so that there is a point with
  strictly negative abscissa in $\mathcal{C}(P)$.
  This implies that for
  any $\mu\in \mathbb{R}$, the characteristic polynomial
  $\Phi_{(P;\mu)}(C)$ has a non-zero root if and only if $\mu=0$ or
  $\mu$ is the co-slope of a side of $\mathcal{N}(P)$.

  Let us prove part (1). Let $s(x)=c_0\,x^{\mu_0}+\cdots\in \Omega$,
  with $c_0\neq 0$, be a non-constant 
  solution of $P=0$. We may assume that $\mu_0\neq 0$, otherwise, we
  perform the change of variable $P[c_0]=P(y_0+c_0,y_1)$ to obtain a
  polynomial with constant coefficients  which has
  $s(x)-c_0=c_1\,x^{\mu_1}+\cdots $, with $\mu_1>0$, as solution. 
  By Lemma \ref{le:key_lemma_Newton_Polygon},
  $c_0$ is a non-zero root of the characteristic polynomial
  $\Phi_{(P;\mu_0)}(C)$. By the above properties of $\mathcal{N}(P)$,
  either $\mu_0=0$ or $\mu_0$ is the co-slope of a side of
  $\mathcal{N}(P)$. Since the sides of $\mathcal{N}(P)$ are rational,
  necessarily $\mu_0$ is a non-zero rational number. Hence
  $r(x)=c_0\,x^{\mu_0}$ is an admissible polynomial for $P=0$.
  Since $\supp P=\{0\}$ and $\langle \supp r(x)\rangle =\mathbb{Q}$,
  then $r(x)$ satisfies the hypothesis of
  Theorem~\ref{the:main3}. Hence, $r(x)$ is the truncation of a solution $s(x)$ of $P=0$. 
  By Theorem~\ref{the:main1},
  $\langle\supp s(x)\rangle=\mathbb{Q}$, hence $s(x)\in
  \mathbb{C}((x^{\mathbb{Q}}))$. Now,  the arguments
  given in Proposition~\ref{pro:stabilization} for the grid-based
  case, shows that $s(x)$ is grid-based, hence  $s(x)\in
  \hat{K}^{*}$.

  Let us prove part (3). Since $P$ is autonomous, we may assume that
  $x_0=0$. Performing the transformation $P(y_0+c_0,y_1)$,
  we may assume that $c_0=0$.  Write $P$ as
  in~\eqref{eq:AutFirstODE}. If $P_{0,0}=0$, then $s(x)=0$ is a
  solution and we are done. Assume that $P_{0,0}\neq 0$. Since $P$ is
  of order one, the Newton polygon $\mathcal{N}(P)$ has a vertex of the
  form $(-j,i+j)$, with $j\geq 1$. Hence $\mathcal{N}(P)$ has at least
  one side with co-slope $0<\mu_0\in \mathbb{Q}$. By the properties of
  $\mathcal{N}(P)$, the characteristic polynomial
  $\Phi_{(P;\mu_0)}(C)$ has a non-zero root $c_0$, and as in part (2),
  there exists a power series $s(x)=c_0\,x^{\mu_0}+\cdots \in \hat{K}^{*}$
  solution of $P=0$. By Corollary~\ref{co:corTh2FirstOrderConstant},
  $s(x)$ is convergent. 
\end{proof}

\section{Example}\label{sec:example}  
Consider the irrational number $\tau=\pi/2$ and the linear differential equations
\begin{align*}
  L_1 & = \tau\,y_0-y_1-\big((\tau-1)\,x+(\tau-2)\,x^{2}+(\tau-3)\,x^{3}\big),\\
  L_2 & = \tau\,y_0-y_1-\big((\tau-1)\,x+(\tau-2)\,x^{2}+(\tau-5)\,x^{5}\big),
\end{align*}
where $y_1$ refers to the Euler derivative $x\frac{d\,y(x)}{d\,x}$
and $y_0=y(x)$, for $y(x)\in \Omega$.
Set $P=L_1\,L_2+x^6\,y_0\,y_1$ and $r(x)=x+x^{\tau}$.
Here $L_1\,L_2$ is considered as a product of polynomials, not
as a composition of differential operators. More precisely
\begin{multline*}
  P=
  y_{1}^2+\tau^2\,y_{0}^2 +\left(-2\,\tau +x^6\right)\,y_{0}\,y_{1}\\
  +\left((2\,\tau-2)\,x+(2\,\tau-4)\,x^2
    +(\tau-3)\,x^3+(\tau-5)\,x^5\right)\,y_{1}\\
  +\left(
    2\,\tau\,(1-\tau)x+2\,\tau(2-\tau)\,x^{2}
    +\tau\,(3-\tau)\,x^3+\tau\,(5-\tau)\,x^5
  \right)\,y_{0}\\
  +(1-2\,\tau+\tau^{2})\,x^{2}+(4-6\tau+2\,\tau^{2})\,x^{3}+
  \cdots
  +(15-8\,\tau+\tau^2)\,x^8.
\end{multline*}
Let us verify
that $r(x)$ satisfies the criterion of Theorem~\ref{the:main3} for
$P=0$ but it can be continued to two
different solutions, so that $r(x)$ does not satisfy the stabilization
criterion. Recall that Theorem~\ref{the:main3} guarantees also the
convergence of those continuations.

As the order of the equation $P=0$ is one and $\tau$ is irrational we
only need to show that $r(x)$ is admissible. To this end,
define $P_0=P$, $P_1=P_0[x]$, $P_2=P_1[x^{\tau}]$.
Writing only the terms corresponding to the border of each Newton
polygon, we have 
  \begin{align*}
  P_{1}&= (\tau\,y_0-y_1)^{2}+
2\,\left(\tau-2\right)\,x^2\,\left(y_{1}-\tau\,y_{0}\right)+
\left(\tau-2\right)^2\,x^4+\cdots,\\
P_2&=(\tau\,y_0-y_1)^{2}+
2\,\left(\tau-2\right)\,x^2\,\left(y_{1}-\tau\,y_{0}\right)+
\left(\tau-2\right)^2\,x^4+\cdots.
  \end{align*}

From now on we refer the reader to Figure~\ref{fig:example-A1} for the
structure and elements of the Newton polygon of each the equations we
shall compute. 

The element $E_1(P_0)$ of co-slope $\nu_1=1$ 
is the side of $\mathcal{N}(P_0)$ with vertices $(0,2)$ and $(2,2)$.
Its characteristic polynomial is
$\Phi_{(P;1)}(C)=\tau^{2}(\tau^{-1}-1)(C-1)^{2}$,
whose only root is $c_1=1$. Hence the polynomial $r_1(x)=x$ is
admissible for $P$  but  does not
satisfy either the criterion of Theorem~\ref{the:main3} (it has no
irrational exponents), or the
stabilization criterion (because $\height \Bot(E_1(P_1))=2$).

Now for $P_1$ the indicial polynomial for the vertex 
$V=(0,2)$ of $\mathcal{N}(P_1)$ is
$\Psi_{(P_1,V)}(\mu)=\tau-\mu$, whose root is $\mu=\tau$. Set $\nu_2=\tau$.
Since $1<\tau<2$, the element $E_{\tau}(P_1)$ is $V$, and its characteristic
polynomial is  $\Phi_{(P_1;\tau)}(C)=0$ so that any $c_2\in
\mathbb{C}$ is a root. We choose $c_2=1$.
Thus  $r_2(x)=x+x^{\tau}$ is an
admissible polynomial for $P=0$. 
We have not reached the stabilization
step because $E_{\tau}(P_2)=\{V\}$ and the bottom vertex is $V$ which has
height~$2$ (see $\mathcal{N}(P_2)$ in Figure~\ref{fig:example-A1}).
However, we can already apply Theorem~\ref{the:main3} so that
$x+x^{\tau}$ can be continued to at least one solution of
$P=0$. Moreover, by Theorem~\ref{the:main2},
any solution which is a continuation of $x+x^{\tau}$ is convergent.

Let us try and continue the process to see when we reach the
stabilization step.

As $\mathcal{N}(P_2)$ has a unique side, of co-slope $2$,
$E_2(P_2)$, we need to compute the corresponding characteristic polynomial
\begin{displaymath}
  \Phi_{(P_2;2)}(C)=(\tau-2)^{2}+2\,(\tau-2)(2-\tau)\,C+(\tau-2)^{2}=(\tau-2)^{2}(C-1)^{2},
\end{displaymath}
whose only root is $c_3=1$ which makes $r_3(x)=x+x^{\tau}+x^{2}$ an
admissible polynomial. Let $P_3=P_2[x^{2}]$:
\begin{displaymath}
  P_3=(\tau\,y_0-y_1)^{2}+
\left(\tau-3\right)\,x^3\,\left(y_{1}-\tau\,y_{0}\right)+
     \left(\tau-4\right)^2\,x^8+\cdots,
\end{displaymath}
whose Newton polygon $\mathcal{N}(P_3)$ has $E_2(P_3)=\{V\}$. Again,
we still have not reached the stabilization step.

The Newton polygon $\mathcal{N}(P_3)$ has two sides: setting
$W=(3,1)$, they are $E_{3}(P_3)=[V,W]$ and
$E_5(P_3)=[W,(8,0)]$ of
co-slopes $3$ and $5$ respectively. 
Notice that Proposition~\ref{prop:main} applies to the
points $V$, $W$ and $(8,0)$  and as a consequence their
corresponding indicial polynomials are powers of $(\mu-\tau)$ up to a
constant factor:
\begin{displaymath}
  \Psi_{(P_3;V)}(\mu)=(\tau-\mu)^{2},\,\,\,
\Psi_{(P_3;W)}(\mu)=(\tau-3)\,(\mu-\tau),\,\,\,
\Psi_{(P_3;(8,0)}(\mu)=(\tau-4)^{2}\,(\mu-\tau)^{0}.
\end{displaymath}
This guarantees that the
characteristic polynomials $\Phi_{(P_3;3)}(C)$ and $\Phi_{(P_3;5)}(C)$
have non-zero roots. For co-slope $3$ we get 
\begin{displaymath}
  \Phi_{(P_3;3)}(C)=(\tau-3)^{2}\,C^{2}+(\tau-3)\,(3-\tau)\,C=(\tau-3)^{2}\,C\,(C-1), 
\end{displaymath}
and for  co-slope $5$,
\begin{displaymath}
  \Phi_{(P_3;5)}(C)=(\tau-3)\,(5-\tau)\,C+(\tau-4)^{2},
\end{displaymath}
whose single root is $d_5=\frac{(\tau-4)^{2}}{(\tau-3)(\tau-5)}$.
Since $\Phi_{(P_3;3)}(1)=0$ and $\Phi_{(P_3;5)}(d_5)=0$,
we can continue $r_3(x)$ with either $x^{3}$ or $d_5\,x^{5}$ to get
admissible polynomials.
Let $P_4=P_3[x^{3}]$ and $Q_4=P_3[d_5\,x^{5}]$:
\begin{align*}
  P_4&=(\tau\,y_0-y_1)^{2}
       -\left(\tau-3\right)\,x^3\,\left(y_{1}-\tau\,y_{0}\right)+
       x^{8}+\cdots,\\
  Q_4&=(\tau\,y_0-y_1)^{2}+
       \left(\tau-3\right)\,x^3\,\left(y_{1}-\tau\,y_{0}\right)+
       (1+\tau)\,x^{7+\tau}+\cdots.
\end{align*}
The structure of  $\mathcal{N}(P_4)$ guarantees 
that $z(x)=x+x^{\tau}+x^{2}+x^{3}$ stabilize $P$ because 
 $\Bot(E_3(P_4))=W$ has height $1$ and its indicial polynomial
$\Psi_{(P_4,W)}(\mu)= (\tau-3)(\mu-\tau)$ has no roots greater than~$3$. 
The stabilization criterion now implies that $z(x)$
can be completed to an actual solution $\bar{z}(x)$, necessarily convergent.
Similar arguments apply to the other choice $Q_4=P_3[d_5\,x^{5}]$ and
$w(x)=x+x^{\tau}+x^{2}+d_5\,x^{5}$, which can be continued to a
convergent solution $\bar{w}(x)$. For the sake of the reader we
include some of the first terms of $\bar{z}(x)$ and $\bar{w}(x)$:
\begin{align*}
  \bar{z}(x)&=x+x^{\tau}+x^{2}+x^{3}+
  \frac{1}{(\tau-5)(3-\tau)}\,x^{5}+\frac{\tau+1}{4(\tau-3)}\,x^{4+\tau}+\cdots,\\
  \bar{w}(x)&=x+x^{\tau}+x^{2}+d_5\,x^{5}-\frac{\tau+1}{4(\tau-3)}\,x^{4+\tau}
              +\frac{3}{(\tau-6)(\tau-3)}x^{6}
+\cdots
\end{align*}
 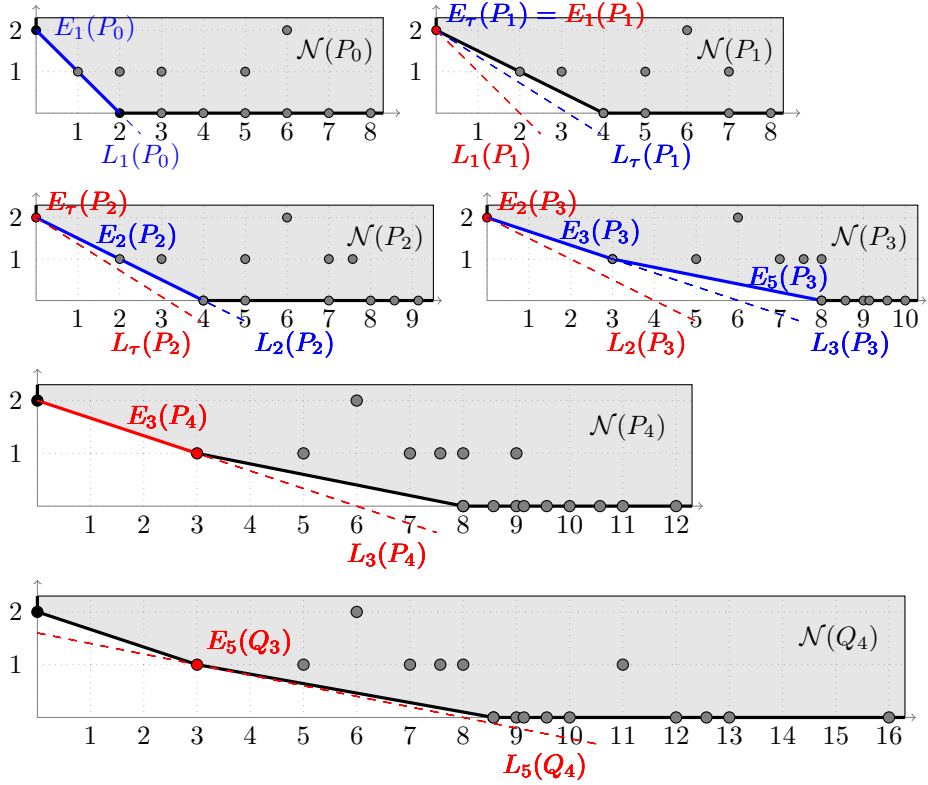
\begin{figure}[h!]
  \flushleft
  \begin{tikzpicture}[scale=0.55]
    \draw[fill=black!10, draw opacity=0.1] (0,2.3) -- (0,2) -- (1,1)
    -- (2,0) -- (8.3,0) -- (8.3,2.3) -- (0,2.3);
    \draw[gray, very thin,->](0,0) -- (0,2.6);
    \draw[gray, very thin,->](0,0) -- (8.7,0);
    \foreach \x in {1,...,8} {
      \draw (\x, 0) node[anchor=north] {$\x$};
    }
    \foreach \y in {1,...,2} {
      \draw (-0.1, \y) node[anchor=east] {$\y$};
    }
    \draw [gray, very thin, dotted](0,0) grid (8.3,2.3);
    \draw (8.3,1.5) node[anchor=east] {$\mathcal{N}(P_0)$};
    \draw[fill=black] (0,2) circle(3pt);
    \draw[fill=black] (2,0) circle(3pt);
    \draw [black, very thick] (0,2.3)--(0,2);
    \draw [blue, very thick] (0,2)--(2,0);
    \draw [black, very thick] (2,0)--(8.3,0);
    \draw [blue] (0.2,1.5) node[anchor=south west] {$E_{1}(P_0)$};
    \draw[fill=gray] (6,2) circle(3pt);
    \foreach \x in {1,2,3,5} {
      \draw [fill=gray] (\x,1) circle(3pt);}
    \foreach \x in {3,...,8} {
      \draw [fill=gray] (\x,0) circle(3pt);
    } 
     \draw [blue, dashed] (0,2)--(2.5,-0.5);
       \draw [blue] (2.5,-0.5) node[anchor=north]
       {$L_{1}(P_0)$};
     \end{tikzpicture}
   \begin{tikzpicture}[scale=0.55]
    \draw[fill=black!10, draw opacity=0.1] (0,2.3) -- (0,2) -- (2,1)
    -- (4,0) -- (8.3,0) -- (8.3,2.3) -- (0,2.3);
    \draw[gray, very thin,->](0,0) -- (0,2.6);
    \draw[gray, very thin,->](0,0) -- (8.7,0);
    \foreach \x in {1,...,8} {
      \draw (\x, 0) node[anchor=north] {$\x$};
    }
    \foreach \y in {1,...,2} {
      \draw (-0.1, \y) node[anchor=east] {$\y$};
    }
    \draw [gray, very thin, dotted](0,0) grid (8.3,2.3);
    \draw (8.3,1.5) node[anchor=east] {$\mathcal{N}(P_1)$};
    \draw [black, very thick] (0,2.3)--(0,2);
    \draw [black, very thick] (0,2)--(4,0);
    \draw [black, very thick] (4,0)--(8.3,0);
    \draw[fill=red] (0,2) circle(3pt);
    \draw[fill=black] (4,0) circle(3pt);
     \draw[fill=gray] (6,2) circle(3pt);
    \foreach \x in {2,3,5,7} {
      \draw [fill=gray] (\x,1) circle(3pt);}
    \foreach \x in {4,5,7,8} {
      \draw [fill=gray] (\x,0) circle(3pt);
       \draw [blue] (0,1.8) node[anchor=south west]
       {$E_{\tau}(P_1)=\color{red}E_{1}(P_1)$};
       \draw [blue, dashed] (0,2)--(3.93,-0.5);
       \draw [blue] (3.95,-0.5) node[anchor=north west]
       {$L_{\tau}(P_1)$};
       \draw [red, dashed] (0,2)--(2.5,-0.5);
       \draw [red] (2.5,-0.5) node[anchor=north east]
       {$L_{1}(P_1)$};
    }
  \end{tikzpicture}

   \begin{tikzpicture}[scale=0.55]
    \draw[fill=black!10, draw opacity=0.1] (0,2.3) -- (0,2) -- (2,1)
    -- (4,0) -- (9.5,0) -- (9.5,2.3) -- (0,2.3);
    \draw[gray, very thin,->](0,0) -- (0,2.6);
    \draw[gray, very thin,->](0,0) -- (9.7,0);
    \foreach \x in {1,...,9} {
      \draw (\x, 0) node[anchor=north] {$\x$};
    }
    \foreach \y in {1,...,2} {
      \draw (-0.1, \y) node[anchor=east] {$\y$};
    }
    \draw [gray, very thin, dotted](0,0) grid (9.3,2.3);
    \draw (9.5,1.5) node[anchor=east] {$\mathcal{N}(P_2)$};
    \draw [black, very thick] (0,2.3)--(0,2);
    \draw [blue, very thick] (0,2)--(4,0);
    \draw [black, very thick] (4,0)--(9.5,0);
    \draw[fill=red] (0,2) circle(3pt);
    \draw[fill=black] (4,0) circle(3pt);
     \draw[fill=gray] (6,2) circle(3pt);
    \foreach \x in {2,3,5,7,7.57} {
      \draw [fill=gray] (\x,1) circle(3pt);}
    \foreach \x in {4,5,7,8,8.57,9.14} {
      \draw [fill=gray] (\x,0) circle(3pt);
      \draw [blue] (1.2,1.0) node[anchor=south west] {$E_{2}(P_2)$};
       \draw [red] (0,1.8) node[anchor=south west]
       {$E_{\tau}(P_2)$};
       \draw [red, dashed] (0,2)--(3.93,-0.5);
       \draw [red] (3.95,-0.5) node[anchor=north east]
       {$L_{\tau}(P_2)$};
        \draw [blue, dashed] (0,2)--(5,-0.5);
       \draw [blue] (5,-0.5) node[anchor=north west]
       {$L_{2}(P_2)$};
    }
  \end{tikzpicture}
   \begin{tikzpicture}[scale=0.55]
    \draw[fill=black!10, draw opacity=0.1] (0,2.3) -- (0,2) -- (3,1)
    -- (8,0) -- (10.3,0) -- (10.3,2.3) -- (0,2.3);
    \draw[gray, very thin,->](0,0) -- (0,2.6);
    \draw[gray, very thin,->](0,0) -- (10.5,0);
    \foreach \x in {1,...,10} {
      \draw (\x, 0) node[anchor=north] {$\x$};
    }
    \foreach \y in {1,...,2} {
      \draw (-0.1, \y) node[anchor=east] {$\y$};
    }
    \draw [gray, very thin, dotted](0,0) grid (10.3,2.3);
    \draw (10.3,1.5) node[anchor=east] {$\mathcal{N}(P_3)$};
    \draw [black, very thick] (0,2.3)--(0,2);
    \draw [blue, very thick] (0,2)--(3,1);
    \draw [blue, very thick] (3,1)--(8,0);
    \draw [black, very thick] (8,0)--(10.3,0);
    \draw[fill=red] (0,2) circle(3pt);
    \draw[fill=black] (3,1) circle(3pt);
     \draw[fill=black] (8,0) circle(3pt);
     \draw[fill=gray] (6,2) circle(3pt);
    \foreach \x in {3,5,7,7.57,8} {
      \draw [fill=gray] (\x,1) circle(3pt);}
    \foreach \x in {8,8.57,9,9.14,9.57,10} {
      \draw [fill=gray] (\x,0) circle(3pt);
      \draw [blue] (1.5,1.1) node[anchor=south west] {$E_{3}(P_3)$};
      \draw [blue] (6,0.0) node[anchor=south west] {$E_{5}(P_3)$};
      \draw [red] (0,1.8) node[anchor=south west]
       {$E_{2}(P_3)$};
        \draw [red, dashed] (0,2)--(5,-0.5);
       \draw [red] (5,-0.5) node[anchor=north east]
       {$L_{2}(P_3)$};
           \draw [blue, dashed] (0,2)--(7.5,-0.5);
       \draw [blue] (7.5,-0.5) node[anchor=north west]
       {$L_{3}(P_3)$};
    }
  \end{tikzpicture} 

   \begin{tikzpicture}[scale=0.70]
    \draw[fill=black!10, draw opacity=0.1] (0,2.3) -- (0,2) -- (3,1)
    -- (8,0) -- (12.3,0) -- (12.3,2.3) -- (0,2.3);
    \draw[gray, very thin,->](0,0) -- (0,2.6);
    \draw[gray, very thin,->](0,0) -- (12.5,0);
    \foreach \x in {1,...,12} {
      \draw (\x, 0) node[anchor=north] {$\x$};
    }
    \foreach \y in {1,...,2} {
      \draw (-0.1, \y) node[anchor=east] {$\y$};
    }
    \draw [gray, very thin, dotted](0,0) grid (12.3,2.3);
    \draw (12,1.5) node[anchor=east] {$\mathcal{N}(P_4)$};
    \draw [black, very thick] (0,2.3)--(0,2);
    \draw [red, very thick] (0,2)--(3,1);
    \draw [black, very thick] (3,1)--(8,0);
    \draw [black, very thick] (8,0)--(12.3,0);
    \draw[fill=black] (0,2) circle(3pt);
    \draw[fill=red] (3,1) circle(3pt);
     \draw[fill=black] (8,0) circle(3pt);
     \draw[fill=gray] (6,2) circle(3pt);
    \foreach \x in {5,7,7.57,8,9} {
      \draw [fill=gray] (\x,1) circle(3pt);}
    \foreach \x in {8,8.57,9,9.14,9.57,10,10.57,11,12} {
      \draw [fill=gray] (\x,0) circle(3pt);
      \draw [red] (1.5,1.25) node[anchor=south west] {$E_{3}(P_4)$};
       \draw [red, dashed] (0,2)--(7.5,-0.5);
       \draw [red] (7.5,-0.5) node[anchor=north east]
       {$L_{3}(P_4)$};
     }
     \end{tikzpicture} 
   \begin{tikzpicture}[scale=0.70]
    \draw[fill=black!10, draw opacity=0.1] (0,2.3) -- (0,2) -- (3,1)
    -- (8.57,0) -- (16.3,0) -- (16.3,2.3) -- (0,2.3);
    \draw[gray, very thin,->](0,0) -- (0,2.6);
    \draw[gray, very thin,->](0,0) -- (16.5,0);
    \foreach \x in {1,...,16} {
      \draw (\x, 0) node[anchor=north] {$\x$};
    }
    \foreach \y in {1,...,2} {
      \draw (-0.1, \y) node[anchor=east] {$\y$};
    }
    \draw [gray, very thin, dotted](0,0) grid (16.3,2.3);
    \draw (16,1.5) node[anchor=east] {$\mathcal{N}(Q_4)$};
    \draw [black, very thick] (0,2.3)--(0,2);
    \draw [black, very thick] (0,2)--(3,1);
    \draw [black, very thick] (3,1)--(8.57,0);
    \draw [black, very thick] (8.57,0)--(16.3,0);
    \draw[fill=black] (0,2) circle(3pt);
    \draw[fill=red] (3,1) circle(3pt);
     \draw[fill=black] (8.57,0) circle(3pt);
     \draw[fill=gray] (6,2) circle(3pt);
    \foreach \x in {5,7,7.57,8,11} {
      \draw [fill=gray] (\x,1) circle(3pt);}
    \foreach \x in {8.57,9,9.14,9.57,10,12,12.57,13,16} {
      \draw [fill=gray] (\x,0) circle(3pt);
      \draw [red] (3,1) node[anchor=south west]{$E_{5}(Q_3)$};
        \draw [red, dashed] (0,1.6)--(10.5,-0.5);
       \draw [red] (10.5,-0.5) node[anchor=north east]
       {$L_{5}(Q_4)$};
          
    }
  \end{tikzpicture} 
 \caption{Newton polygons of $P_0,P_{1},P_2$ and $P_3$ following the
   admissible generalized polynomial $x+x^{\tau}+x^2+x^3$ up to it is
   reached the stabilization step.}
 \label{fig:example-A1}
\end{figure}

\section{Acknowledgments}
This work is part of action ``PID2022-139631NB-I00'', supported by the MICIU/AEI, with ID number 10.13039/501100011033, and by FEDER, UE.
\section{Competing Interests}
The authors declare that they have no competing interests.

\bibliographystyle{abbrv}

\end{document}